\documentclass[12pt, reqno]{amsart}

\usepackage{amsmath,amsthm,amsfonts,amscd,amssymb,bbm,enumerate}
\usepackage{cases}

\usepackage{times,euler,eucal,graphicx}
\usepackage{hyperref}
\usepackage[linecolor=white,backgroun dcolor=white,bordercolor=white,textsize=tiny]{todonotes}
\UseRawInputEncoding

\usepackage{graphicx}
\def\pprec{\mathrel{\scalebox{.9}[1]{$\prec$}\mkern-5mu%
  \scalebox{.4}[1]{$\prec$}\mkern-5.5mu\scalebox{.4}[1]{$\prec$}}}

\DeclareMathOperator*{\comp}{\mathbb{C}}

\usepackage{caption} 
\usepackage{subcaption}

\usepackage{IEEEtrantools}

\usepackage{color}

\renewcommand\labelenumi{(\roman{enumi})}
\renewcommand\theenumi\labelenumi

\numberwithin{equation}{section}
\setlength{\hoffset}{-.75in}
\setlength{\textwidth}{6.5in}
\setlength{\voffset}{-.5in}
\setlength{\textheight}{9.0in}

\newtheorem{theorem}{Theorem}[section]
\newtheorem{lemma}[theorem]{Lemma}
\newtheorem{corollary}[theorem]{Corollary}
\newtheorem{proposition}[theorem]{Proposition}
\newtheorem{assumption}{Assumption}

\theoremstyle{definition}
\newtheorem{definition}[theorem]{Definition}
\newtheorem{notation}[theorem]{Notation}
\newtheorem{example}[theorem]{Example}
\newtheorem{remark}[theorem]{Remark}

\renewcommand{\epsilon}{\varepsilon}

\newcommand{\Imm}{{\Im}m}

\newcommand{\A}{\mathcal{A}}
\newcommand{\B}{\mathcal{B}}
\newcommand{\C}{\mathbb{C}}
\newcommand{\D}{\mathcal{D}}
\newcommand{\E}{\mathbb{E}}
\newcommand{\F}{\mathcal{F}}
\newcommand{\G}{\mathcal{G}}
\newcommand{\bH}{\mathbb{H}}

\newcommand{\bN}{\mathbb{N}}

\newcommand{\R}{\mathbb{R}}

\newcommand{\uy}{\mathbf{y}}
\newcommand{\uz}{\mathbf{z}}
\newcommand{\uu}{\mathbf{u}}
\newcommand{\uv}{\mathbf{v}}

\newcommand{\ux}{\mathbf{x}}

\newcommand{\Zb}{\mathfrak{b}}

\newcommand{\h}{\textbf{h}}

\newcommand{\id}{\operatorname{id}}

\newcommand{\tr}{\operatorname{tr}}

\newcommand{\1}{\mathbf{1}}

\makeatletter
\def\moverlay{\mathpalette\mov@rlay}
\def\mov@rlay#1#2{\leavevmode\vtop{%
\baselineskip\z@skip \lineskiplimit-\maxdimen
\ialign{\hfil$#1##$\hfil\cr#2\crcr}}}
\makeatother

\makeatletter
\def\@settitle{\begin{center}%
  \baselineskip14\p@\relax
    \normalfont \Large \uppercase{ \textbf{\@title}}
  \end{center}
 }

\makeindex

\title[On Boolean and Monotone Limit Theorems]{Quantitative Estimates for Operator-Valued and Infinitesimal  Boolean and Monotone Limit Theorems }

\author[O. Arizmendi]{Octavio Arizmendi}
\address{CIMAT, Guanajuato, Mexico}
\email{octavius@cimat.mx}

\author[M. Banna]{Marwa Banna}
\address{New York University Abu Dhabi, Division of Science, Mathematics, Abu Dhabi, UAE}
\email{marwa.banna@nyu.edu}

\author[PL. Tseng]{Pei-Lun Tseng}
\address{New York University Abu Dhabi, Division of Science, Mathematics, Abu Dhabi, UAE}
\email{pt2270@nyu.edu}

\date{\today}

\thanks{...}
\keywords{ Lindeberg method, noncommutative distributions, Boolean independence, Monotone Independence, Berry-Esseen bounds, operator-valued CLT, operator-valued matrices, L\'evy distance}
\subjclass[2000]{46L54, 60B10, 60B20}

\begin{document}

\begin{abstract}
We provide Berry-Esseen bounds for sums of operator-valued Boolean and monotone independent variables, in terms of the first moments of the summands. Our bounds are on the level of Cauchy transforms as well as the L\'evy distance. As applications, we obtain quantitative bounds for the corresponding CLTs, provide a quantitative "fourth moment theorem" for monotone independent random variables including the operator-valued case, and generalize the results by Hao and Popa on matrices with Boolean entries.  Our approach relies on a Lindeberg method that we develop for sums of Boolean/monotone independent random variables.  Furthermore, we push this approach to the infinitesimal setting to obtain the first quantitative estimates for the operator-valued infinitesimal free, Boolean and monotone CLT.
\end{abstract}

\maketitle

\section{Introduction}
Noncommutative probability theory deals with operators from a probabilistic viewpoint. Like any probabilistic theory, one of its main notions is that of independence. A special feature of noncommutative probability is however the diversity of such notions, where five of them are considered to be the most fundamental following the classification of natural products of Muraki \cite{muraki2003five} from the categorical axiomatization viewpoint of Ben Ghorbal and Sch\"urmann \cite{ghorbal2002non}. Other than the well studied tensor independence and free independence, the are three other notions satisfying such axioms: Boolean independence, monotone independence  and its mirror image antimonotone independence.
Operator-valued (OV) and operator-valued infinitesimal (OVI)  extensions of these different notions of independence also exist, where independence is defined with respect to some conditional expectation and some completely positive linear $\B$-bimodule map respectively, see Sections \ref{section:prelOVPT} and \ref{section:prel-OVI}.

In this paper, we are mainly concerned with operator-valued Boolean and monotone independence.  We are interested in studying distributions of sums of independent variables $x_1+\dots+x_n$, for which we prove Berry-Esseen results that are further extended to the operator-valued infinitesimal setting. One should note that  Boolean and monotone independences have the special feature that constants are not independent from any non-trivial algebra $\A$. Thus adapting  strategies for free independence, as  for example in \cite{BannaMaiBerry}, is not direct. For instance, when working with Cauchy transforms one has the problem that (Boolean or monotone) independence of $X$  from $Y$, does not imply the independence of $X$ from $(z-Y)^{-1}$, for a complex number $z$.  Thus  in order to make precise estimates of joint moments one needs a more detailed analysis.

Our main results, stated in Section \ref{section:main results}, have various applications including quantitative bounds for the operator-valued Boolean and monotone central limit theorems. These bounds are on the level of the operator-valued Cauchy transforms as well as the L\'evy distance and are merely in terms of the moments of fourth and second order, see Sections \ref{section:BooleanCLT} and \ref{section:monotoneCLT}. As further applications, we provide in Section \ref{section:infinitelydivisiblemeasures} a quantitative "fourth moment theorem" for the class of infinitely divisible measures with respect to the monotone convolution including the operator-valued case.  Our approach also generalizes the results by Hao and Popa \cite{popahao2019} on matrices with Boolean entries to include general variance profiles, see Section \ref{section:matrices}. 
Finally, as our results extend to the infinitesimal setting, we provide in Section \ref{section:Inf-CLT} the first quantitative estimates for the operator-valued infinitesimal free, Boolean and monotone central limit theorems. Moreover, we provide in Appendix \ref{Appendix:OVI} an algebraic construction of the operator-valued infinitesimal product in the free, Boolean and monotone settings.   

Our approach relies on the Lindeberg method \cite{Lindeberg} which is a powerful method that allows comparing distributions of sums of independent variables.  The idea is to compare the distribution of $x_1+\dots+x_n$ with that of $y_1+\dots+y_n$, by keeping track of the effect of changing $x_i$ by $y_i$, one at a time. This method has been used in noncommutative probability previously, in the context of random matrices, free and exchangeable random variables; see the papers \cite{bannacebron2018,BannaMaiBerry,chatterjee2006}.

\subsection{Statement of results}\label{section:main results}

Let $(\A, E, \B)$ be an operator-valued $C^\ast$-probability space  and let  $x=\{x_1, \dots, x_N\}$ and $y=\{y_1, \dots, y_N\}$ be two self-adjoint families in $\A$ whose elements are infinitesimally free, Boolean, or monotone independent over $\B$. Our aim is to compare the analytic distribution, in terms of Cauchy transforms, of 
\[
\ux_N=\sum_{i=1}^N x_i
\quad \text{and} \quad
\uy_N=\sum_{i=1}^N y_i,
\]
whenever the variables are centered and have matching moments of second order. More precisely, we assume the following:
\begin{assumption}\label{A:general}
The two families $\{x_1, \dots, x_N\}$ and $\{y_1, \dots, y_N\}$ consist of self-adjoint elements in $\A$  such that for any $j=1, \dots , N$, 
\begin{itemize}
\item $ E [ x_j]=E[ y_j]=0$,
\item $E \big[ x_j  b \,  x_j  \big]= E \big[ y_j\,  b \, y_j \big],$ for all $b \in \B$.
\end{itemize}
\end{assumption}
To state our main estimate we need to introduce first some notation on operator-valued moments
\[
\alpha_2(x) = \max_{1\leq i\leq N} \|E[x_i^2 ] \|, \quad  \alpha_4(x):=\max_{1\leq i\leq N}\sup \|E[x_i  b^* x_i^2 b x_i ] \|
\quad \text{and}\quad  
\widetilde{\alpha}_4(x) = \max_{1\leq i\leq N} \|E[x_i^4 ] \|,
\]
 where the supremum is taken over $b \in \B$ such that $ \|b\|=1$.
\begin{theorem}[Boolean Case]\label{theo:boolean-monotone}
Let $(\A, E, \B)$ be an operator-valued $C^\ast$-probability space. Let $N \in \bN$ and consider two families  $x=\{x_1, \dots, x_N\}$ and $y=\{y_1, \dots, y_N\}$ of self-adjoint elements in $\A$ that are Boolean independent and satisfy Assumption \ref{A:general}. Then for any $\Zb \in \bH^+(\B)$,
\begin{align}\label{eq1:theo-BM}
\big\| E[G_{\ux_N}(\Zb)] - E[G_{\uy_N}(\Zb)] \big\|
\leq \|\Imm(\Zb)^{-1}\|^4 \sqrt{\alpha_2(x)}\left(\sqrt{\alpha_4(x)}+\sqrt{\alpha_4(y)}\right) \ N .\end{align}
If, in addition, $(\A,\varphi)$ is a $W^\ast$-probability space with $\varphi=\varphi \circ E$, we have that, for any $\epsilon>0$,
\begin{equation}\label{eq2:theo-BM}
\int_\R \big|\Imm \big(\varphi[G_{\ux_N}(t+i\epsilon)]\big)- \Imm \big(\varphi[G_{\uy_N}(t+i\epsilon)]\big)\big| \text{d}t \leq \frac{\pi}{\epsilon^3} \sqrt{\alpha_2(x)} \left(\sqrt{\widetilde{\alpha}_4(x)} + \sqrt{\widetilde{\alpha}_4(y)}\right) N. \end{equation}
\end{theorem}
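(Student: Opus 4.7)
The plan is to implement the Lindeberg swap method directly on operator-valued resolvents, combined with the Boolean factorization rule to handle the absence of independence between $x_k$ and the resolvent of the ``rest'' $T_k$. I introduce hybrid sums $S_k=\sum_{i\le k}x_i+\sum_{i>k}y_i$, so $S_0=\uy_N$ and $S_N=\ux_N$, and set $T_k:=S_k-x_k=S_{k-1}-y_k$, so $T_k$ is independent of the swapped variable. Writing $R_Z(\Zb):=(\Zb-Z)^{-1}$ and iterating the resolvent identity three times gives the expansion
\[
R_{S_k}=R_{T_k}+R_{T_k}x_kR_{T_k}+R_{T_k}x_kR_{T_k}x_kR_{T_k}+R_{T_k}x_kR_{T_k}x_kR_{T_k}x_kR_{S_k},
\]
and analogously for $R_{S_{k-1}}$ with $y_k$ in place of $x_k$. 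Telescoping reduces the task to controlling each per-swap difference $E[R_{S_k}(\Zb)]-E[R_{S_{k-1}}(\Zb)]$.

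To cancel the low-order terms, I expand $R_{T_k}$ formally as the Neumann series $\sum_{n\ge 0}\Zb^{-1}(T_k\Zb^{-1})^n$, distribute $T_k=\sum_{i\ne k}w_i$ where each $w_i\in\{x_i,y_i\}$, and apply the operator-valued Boolean factorization $E[a_1b_1\cdots a_nb_n]=E[a_1]b_1\cdots E[a_n]b_n$ for $a_i$ in alternating subalgebras and $b_i\in\B$. Each term contributing to $E[R_{T_k}x_kR_{T_k}]$ becomes an alternating product in which $x_k\in\A_k$ appears as an isolated factor (its nearest non-$\B$ neighbors, after combining same-index $w$-adjacencies, belong to algebras $\A_i$ with $i\ne k$), so the Boolean factorization extracts $E[x_k]=0$; hence $E[R_{T_k}x_kR_{T_k}]=0$. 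For the second-order term $E[R_{T_k}x_kR_{T_k}x_kR_{T_k}]$, the two $x_k$'s are either both isolated (again killed by $E[x_k]=0$) or, precisely when the middle resolvent contributes only its leading $\Zb^{-1}$, merge into the single $\A_k$-element $x_k\Zb^{-1}x_k$; all surviving contributions depend on $x_k$ only through $E[x_k\Zb^{-1}x_k]$, which by Assumption~\ref{A:general} equals $E[y_k\Zb^{-1}y_k]$, so the second-order terms match exactly. Consequently, the swap difference reduces to the third-order remainder $E[R_{T_k}(x_kR_{T_k})^2x_kR_{S_k}]-E[R_{T_k}(y_kR_{T_k})^2y_kR_{S_{k-1}}]$.

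To estimate each remainder, I split $R_{T_k}(x_kR_{T_k})^2x_kR_{S_k}=P\cdot Q$ with $P=R_{T_k}x_k$ and $Q=R_{T_k}x_kR_{T_k}x_kR_{S_k}$, and apply the $\B$-valued Cauchy--Schwarz inequality $\|E[PQ]\|\le\|E[PP^*]\|^{1/2}\|E[Q^*Q]\|^{1/2}$. A further Boolean-factorization pass on $E[PP^*]=E[R_{T_k}x_k^2R_{T_k}^*]$ — exploiting that $x_k^2\in\A_k$ is Boolean independent of $T_k$ — isolates $E[x_k^2]$ between two copies of $G_{T_k}$, yielding $\|E[PP^*]\|\le\|\Imm(\Zb)^{-1}\|^2\alpha_2(x)$. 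An operator-valued Cauchy--Schwarz inside $E[Q^*Q]$ together with the definition of $\alpha_4$ as $\sup_{\|b\|=1}\|E[x_ib^*x_i^2bx_i]\|$ gives $\|E[Q^*Q]\|\le\|\Imm(\Zb)^{-1}\|^6\alpha_4(x)$, and multiplying produces the per-swap bound $\|\Imm(\Zb)^{-1}\|^4\sqrt{\alpha_2(x)\alpha_4(x)}$. Summing over $k$ (and doing the analogous $y$-side estimate) yields \eqref{eq1:theo-BM}. For the integrated inequality \eqref{eq2:theo-BM} I apply $\varphi$ first to scalarize the argument, so that the fourth-moment input can be replaced by $\widetilde{\alpha}_4$, and then integrate in $t$ using the Poisson identity $\int_\R\Imm((t+i\epsilon-\lambda)^{-1})\,\text{d}t=\pi$ to absorb one power of $\epsilon^{-1}$, producing the constant $\pi\epsilon^{-3}$.

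The main obstacle I anticipate is the Boolean factorization analysis underlying the second-order matching step (and the parallel argument for $E[PP^*]$). Because $R_{T_k}$ does not live in any single Boolean subalgebra, one must show — via careful combinatorial bookkeeping of the Neumann expansion after combining same-index $w$-adjacencies — that every non-vanishing contribution to $E[R_{T_k}x_kR_{T_k}x_kR_{T_k}]$ depends on $x_k$ only through the single $\B$-valued moment $E[x_k\Zb^{-1}x_k]$, and all other configurations either vanish through $E[x_k]=0$ or produce identical contributions for $x$ and $y$. A secondary challenge is arranging the Cauchy--Schwarz split in the remainder so that the four resolvent norms combine to exactly $\|\Imm(\Zb)^{-1}\|^4$ with $\alpha_2$ and $\alpha_4$ appearing in the right positions, rather than the cruder $\|x_k\|^3$ bound a naive splitting would give.
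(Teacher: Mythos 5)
Your overall strategy (Lindeberg swap, Neumann-series Boolean factorization of the first- and second-order terms, Cauchy--Schwarz plus positivity for the third-order remainder) matches the paper's, and your treatment of the matching steps and of $E[PP^*]$ is correct. The gap is in the asserted bound $\|E[Q^*Q]\|\le\|\Imm(\Zb)^{-1}\|^6\alpha_4(x)$ for your choice $P=R_{T_k}x_k$, $Q=R_{T_k}x_kR_{T_k}x_kR_{S_k}$. Here $Q^*Q = R_{S_k}^*x_kR_{T_k}^*x_kR_{T_k}^*R_{T_k}x_kR_{T_k}x_kR_{S_k}$ has $R_{S_k}$ at both ends, and since $S_k=T_k+x_k$ is \emph{not} Boolean independent of $x_k$, the Neumann expansion of $R_{S_k}$ cannot be factorized away from the $x_k$'s: Boolean factorization cannot isolate a term of the form $E[x_kb^*x_k^2bx_k]$ from this expression. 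The only available tool to discard an $R_{S_k}$ that cannot be factorized is the positivity inequality $\|E[a^*wa]\|\le\|w\|\,\|E[a^*a]\|$, and with $a=R_{S_k}$ it produces $\|w\|\sim\|R_{T_k}\|^4\|x_k\|^4$, i.e.\ exactly the crude operator-norm bound you explicitly say you want to avoid.

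The missing ingredient is the resolvent swap $G_{\uz_i}(\Zb)\,x_i\,G_{\uz^0_i}(\Zb)=G_{\uz^0_i}(\Zb)\,x_i\,G_{\uz_i}(\Zb)$ (valid since $\uz_i-\uz^0_i=x_i$). Applying it first moves the full-sum resolvent to the interior, and the Cauchy--Schwarz split $P^*=G_{\uz^0_i}x_iG_{\uz_i}$, $Q=x_iG_{\uz^0_i}x_iG_{\uz^0_i}$ then gives $E[P^*P]=E[G_{\uz^0_i}x_iG_{\uz_i}G_{\uz_i}^*x_iG_{\uz^0_i}^*]$, in which the contiguous positive block $G_{\uz_i}G_{\uz_i}^*\le\|G_{\uz_i}\|^2\mathbf{1}$ absorbs the problematic resolvent and Boolean factorization yields the $\alpha_2(x)$ contribution; while $E[Q^*Q]=E[G_{\uz^0_i}^*x_iG_{\uz^0_i}^*x_i^2G_{\uz^0_i}x_iG_{\uz^0_i}]$ now contains only $\uz^0_i$-resolvents and reduces via Boolean factorization to $E[G_{\uz^0_i}^*\,E[x_i(\Zb^*)^{-1}x_i^2\Zb^{-1}x_i]\,G_{\uz^0_i}]$, giving $\alpha_4(x)$. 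Without this rearrangement your split cannot reach the stated moment bound. A secondary omission in your sketch of \eqref{eq2:theo-BM}: the $\pi/\epsilon^3$ factor requires carrying $\|G_{\uz^0_i}(z)\|^2_{L^2(\A,\varphi)}$ through both Cauchy--Schwarz factors and then using $\int_\R\|G_x(t+i\epsilon)\|^2_{L^2}\,\mathrm{d}t=\pi/\epsilon$; the Poisson-kernel normalization alone does not capture this bookkeeping.
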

Let $(\A,E,\B)$ be an operator-valued $C^*$-probability space and $a$ be a element in $\A$. We denote by $\A_a$  the $\B$-bimodule algebra generated by $a$. For given $x_1,\dots,x_k\in \A$, we write $\A_{x_1} \prec \A_{x_2} \prec \dots \prec \A_{x_k}$ over $\B$, whenever $i<j$ and $\A_{x_i}$ is monotone independent from $\A_{x_j}$ over $\B$. 

\begin{theorem}[Monotone Case]\label{theo:monotone-monotone}
Let $(\A, E, \B)$ be an operator-valued $C^\ast$-probability space. Let $N \in \bN$  and consider two families  $x=\{x_1, \dots, x_N\}$ and $y=\{y_1, \dots, y_N\}$ of self-adjoint elements in $\A$  satisfying Assumption \ref{A:general} and that are such that $\A_{x_1} \prec \A_{x_2} \prec \dots \prec \A_{x_N}$ over $\B$ and  $\A_{y_1} \prec \A_{y_2} \prec \dots \prec \A_{y_N}$  over $\B$. Then for any $\Zb \in \bH^+(\B)$,
\begin{align}\label{eq1:theo-monotone}
\big\| E[G_{\ux_N}(\Zb)] - E[G_{\uy_N}(\Zb)] \big\|
\leq  \|\Imm(\Zb)^{-1}\|^4 \sqrt{\alpha_2(x)}\left(\sqrt{\alpha_4(x)}+\sqrt{\alpha_4(y) }\right) \ N .
\end{align}

In the case of a $W^\ast$-probability space $(\A, \varphi)$, where $\B=\mathbb{C}$ and $E=\varphi$, we have in addition for any $\epsilon>0$,
\begin{equation}\label{eq2:theo-monotone}
\int_\R \big|\Imm \big(\varphi[G_{\ux_N}(t+i \epsilon)]\big)- \Imm \big(\varphi[G_{\uy_N}(t+i\epsilon)]\big)\big| \text{d}t \leq \frac{\pi}{\epsilon^3} \sqrt{\alpha_2(x)}\left(\sqrt{ \widetilde{\alpha}_4(x)} + \sqrt{\widetilde{\alpha}_4(y)}\right) N. 
\end{equation}
\end{theorem}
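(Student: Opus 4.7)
The strategy is a Lindeberg-type swapping argument, mirroring the proof of Theorem \ref{theo:boolean-monotone} with modifications to accommodate the intrinsic total order of monotone independence. Working in a suitable enlargement of $(\A,E,\B)$ obtained via the monotone product construction, we may assume that both families are jointly realized so that, for each $1\le k\le N$, the hybrids
\[
S_k := y_1+\cdots+y_{k-1}+x_k+x_{k+1}+\cdots+x_N \quad \text{and}\quad \tilde S_k := y_1+\cdots+y_{k-1}+y_k+x_{k+1}+\cdots+x_N
\]
are formed from families in the monotone order $\A_{y_1}\prec\cdots\prec\A_{y_{k-1}}\prec\A_{x_k}\prec\A_{x_{k+1}}\prec\cdots\prec\A_{x_N}$ over $\B$ (and analogously with $x_k$ replaced by $y_k$). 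Writing $W_k:=y_1+\cdots+y_{k-1}+x_{k+1}+\cdots+x_N$ so that $S_k=W_k+x_k$ and $\tilde S_k=W_k+y_k$, the telescoping identity
\[
E[G_{\ux_N}(\Zb)]-E[G_{\uy_N}(\Zb)]=\sum_{k=1}^N\bigl(E[G_{S_k}(\Zb)]-E[G_{\tilde S_k}(\Zb)]\bigr)
\]
reduces \eqref{eq1:theo-monotone} to bounding each summand by $\|\Imm(\Zb)^{-1}\|^4\sqrt{\alpha_2(x)}(\sqrt{\alpha_4(x)}+\sqrt{\alpha_4(y)})$.

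For the single-step estimate, fix $k$, set $G:=G_{W_k}(\Zb)$, and iterate the resolvent identity three times to obtain, for $a\in\{x_k,y_k\}$,
\[
G_{W_k+a}(\Zb) \;=\; G + GaG + GaGaG + GaGaGaG_{W_k+a}(\Zb).
\]
Applying $E$ and invoking the peak-factorization rule for operator-valued monotone independence (a peak is replaced by its $\B$-expectation, applied iteratively to the alternating words appearing in the norm-convergent expansion $G=\sum_n\Zb^{-1}(W_k\Zb^{-1})^n$), one verifies that $E[GaG]$ and $E[GaGaG]$ depend on the law of $a$ only through $E[a]$ and the completely positive map $b\mapsto E[a b a]$. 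By Assumption \ref{A:general} both coincide for $a=x_k$ and $a=y_k$, so the three leading terms cancel in $E[G_{S_k}]-E[G_{\tilde S_k}]$ and only the third-order remainders $E[GaGaGaG_{W_k+a}(\Zb)]$ with $a\in\{x_k,y_k\}$ survive.

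The remainder is then estimated exactly as in the Boolean case: the three outer resolvents contribute $\|\Imm(\Zb)^{-1}\|^3$, while an operator-valued Cauchy--Schwarz inequality applied to the completely positive map $b\mapsto E[ab^\ast a^2 b a]$ provides the remaining factor $\|\Imm(\Zb)^{-1}\|\sqrt{\alpha_2(a)\alpha_4(a)}$. Summing over $k=1,\dots,N$ and the two choices $a\in\{x_k,y_k\}$ yields the desired bound. The chief departure from the Boolean analysis---and the principal technical obstacle---is the verification of the peak-factorization when $G$ entangles algebras on both sides of $\A_a$ in the monotone order; this requires careful bookkeeping of the nested alternating words generated by the power series expansion of $G$ and a recursive application of the peak rule.

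For the scalar $L^1$ bound \eqref{eq2:theo-monotone}, we specialize to $\B=\C$ and $E=\varphi$, set $\Zb=t+i\epsilon$, and rerun the telescoping for the scalar Cauchy transform $\varphi[G_\cdot(t+i\epsilon)]$. The improvement from $\epsilon^{-4}$ to $\pi\epsilon^{-3}$ comes from applying Fubini to the integral $\int_\R|\Imm(\cdots)|\,dt$ and absorbing one of the three interior scalar resolvents in the third-order remainder via the Plancherel-type identity
\[
\int_\R\bigl|(t-\lambda+i\epsilon)^{-1}\bigr|^2\,dt \;=\; \frac{\pi}{\epsilon},
\]
valid for any real $\lambda$. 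Since $\B=\C$, the sup over unit $b\in\B$ in the definition of $\alpha_4$ collapses and the moment bound reduces to $\widetilde{\alpha}_4$, giving the constants stated in \eqref{eq2:theo-monotone}.
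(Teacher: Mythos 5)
Your proposal follows essentially the same architecture as the paper's proof: a Lindeberg swap in a joint monotone realization, with the first- and second-order contributions cancelling because $E[GaG]$ and $E[GaGaG]$ depend on $a$ only through $E[a]$ and $b\mapsto E[aba]$, and the third-order remainder controlled via Cauchy--Schwarz and positivity. The cosmetic difference that you place $y$'s to the left of $x$'s (whereas the paper takes $x_1\prec\cdots\prec x_N\prec y_1\prec\cdots\prec y_N$ so that $\uu_i=\sum_{j<i}x_j$, $\uv_i=\sum_{j>i}y_j$) is inessential; either joint realization works. That said, there are two substantive gaps.

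First, the ``peak-factorization rule'' that you invoke to show $E[GaG]$ and $E[GaGaG]$ depend only on $E[a]$ and $b\mapsto E[aba]$ is precisely the technical heart of the monotone case, and you leave it unproved, even flagging it yourself as ``the principal technical obstacle.'' In the paper this is the content of Lemma~\ref{Lemma:fact-monotone} (a noncommutative binomial expansion feeding the peak rule) and its corollary Lemma~\ref{Lemma:Monotone-Cauchy}(i), which gives the clean subordination-type identity
\[
E\big[G_{x+y}(b_1)\,w\,G_{x+y}(b_2)\big]=E\big[G_{x}\big(E[G_{y}(b_1)]^{-1}\big)\cdot E[w]\cdot G_{x}\big(E[G_{y}(b_2)]^{-1}\big)\big]
\]
for $x\prec w\prec y$. ``Careful bookkeeping of nested alternating words'' does not by itself establish this; the closed-form resummation is what makes both the cancellation of the low-order terms and the moment estimate on the remainder actually go through. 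An argument at the level of yours would not be complete without proving this lemma or an equivalent.

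Second, your proof of \eqref{eq2:theo-monotone} is not correct as stated. After the monotone factorization, the $L^2$ norm that appears in the third-order bound is $\|G_{\uu_i}(\varphi[G_{\uv_i}(z)]^{-1})\|_{L^2(\A,\varphi)}^2$, i.e.\ the resolvent of $\uu_i$ at the \emph{shifted} point $F_{\uv_i}(z)$, not at $z$. You cannot integrate this in $t$ via the Plancherel-type identity $\int_\R\|G_a(t+i\epsilon)\|_{L^2}^2\,\mathrm{d} t=\pi/\epsilon$ directly, because the argument of the resolvent is not $t+i\epsilon$. The paper closes this via the monotone subordination $\G_{\uu_i}(F_{\uv_i}(z))=\G_{\uu_i+\uv_i}(z)$ together with $\Imm F_{\uv_i}(z)\geq\Imm(z)$, which yields
\[
\|G_{\uu_i}(\varphi[G_{\uv_i}(z)]^{-1})\|_{L^2}^2=-\frac{\Imm\G_{\uu_i+\uv_i}(z)}{\Imm F_{\uv_i}(z)}\leq-\frac{\Imm\G_{\uu_i+\uv_i}(z)}{\Imm(z)}=\|G_{\uu_i+\uv_i}(z)\|_{L^2}^2,
\]
and only then does the Plancherel identity give the final $\pi/\epsilon^3$ bound. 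This step is missing from your argument and is not something the Fubini/Plancherel invocation alone supplies.
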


The above estimates allow us to quantify the CLT for sums of Boolean or monotone independent random variables with amalgamation in terms of the second and fourth moment; see Section \ref{Section 4} for the precise statements. More precisely, the bounds \eqref{eq1:theo-BM} and \eqref{eq1:theo-monotone} allow us to prove convergence in distribution over $\B$, while the bounds \eqref{eq2:theo-BM} and \eqref{eq2:theo-monotone} yield estimates on the L\'evy distance. The method of proof is also adapted to provide estimates on the distribution of matrices with Boolean entries; see Section \ref{section:matrices}.

Suppose that $(\A, E,E', \B)$ is an OV $C^*$- infinitesimal probability space, and let $x=\{x_1,\dots,x_N\}$ and $y=\{y_1,\dots,y_N\}$ be two self-adjoint families in $\A$. 
\begin{assumption}\label{A:Infgeneral}
The two families $x$ and $y$ consist of self-adjoint elements in $\A$ such that for any $j=1,\dots,N$, 
\begin{itemize}
\item $ E [ x_j]=E[ y_j]=E'[x_j]=E'[y_j]=0$,
\item $E \big[ x_j  b \,  x_j  \big]= E \big[ y_j\,  b \, y_j \big]$ and $E' \big[ x_j  b \,  x_j  \big]= E' \big[ y_j\,  b \, y_j \big],$ for all $b \in \B$.
\end{itemize}
\end{assumption}

\begin{theorem}[Infinitesimal Case]\label{eq:theo-Inf}
Let $(\A,E,E',\B)$ be an OV $C^*$-infinitesimal probability space. Let $N\in \mathbb{N}$ and consider two infinitesimally independent families $x=\{x_1,\dots,x_N\}$ and $y=\{y_1,\dots,y_N\}$ of self-adjoint elements in $\A$ that are infinitesimally freely/Boolean/monotone independent satisfy Assumption \ref{A:Infgeneral}. Then for each $b\in \mathbb{H}^+(\B)$,  
\begin{equation}\label{eq:theo-Inf Eqn}
\|E'\big[G_{\ux_N}  (\Zb)\big]  - E' \big[G_{\uy_N}(\Zb)\big]\|\leq 2N \|\Imm(\Zb)^{-1}\|^4  \left( \max_{1\leq i\leq N}\|x_i\|^3  + \max_{1\leq i\leq N}\|y_i\|^3 \right). 
\end{equation}
\end{theorem}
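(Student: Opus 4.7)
The plan is to mimic the Lindeberg telescoping used for Theorems \ref{theo:boolean-monotone} and \ref{theo:monotone-monotone}, but now at the level of the infinitesimal map $E'$. Introduce the hybrid sums
\[
S_i := x_1+\cdots+x_{i-1}+y_{i+1}+\cdots+y_N,\qquad i=1,\dots,N,
\]
so that
\[
E'[G_{\ux_N}(\Zb)] - E'[G_{\uy_N}(\Zb)] = \sum_{i=1}^N\Big(E'[G_{S_i+x_i}(\Zb)]-E'[G_{S_i+y_i}(\Zb)]\Big),
\]
and bound each summand individually. For $a\in\{x_i,y_i\}$, iterating the resolvent identity three times gives
\[
G_{S_i+a}(\Zb)=G_{S_i}(\Zb)+G_{S_i}(\Zb)\,a\,G_{S_i}(\Zb)+G_{S_i}(\Zb)\,a\,G_{S_i}(\Zb)\,a\,G_{S_i}(\Zb)+R_i(a),
\]
where the remainder $R_i(a)=G_{S_i+a}(\Zb)\,(a\,G_{S_i}(\Zb))^3$ satisfies $\|R_i(a)\|\leq \|\Imm(\Zb)^{-1}\|^4\|a\|^3$.

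The heart of the argument is to show that, after applying $E'$, the zeroth-, first-, and second-order contributions either cancel trivially or coincide on the $x$- and $y$-sides. The zeroth-order term $E'[G_{S_i}(\Zb)]$ does not involve $a$, so it cancels at once. For the first- and second-order terms, I would invoke the factorization rules specific to infinitesimal free, Boolean and monotone independence: since $a$ is infinitesimally independent from the $\B$-subalgebra generated by $S_i$, both $E'[G_{S_i}\,a\,G_{S_i}]$ and $E'[G_{S_i}\,a\,G_{S_i}\,a\,G_{S_i}]$ can be expressed as algebraic combinations of $E[a]$, $E'[a]$, $E[a\,G_{S_i}\,a]$, $E'[a\,G_{S_i}\,a]$ together with $E$- and $E'$-moments of $G_{S_i}$ alone. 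By Assumption \ref{A:Infgeneral}, each of these data points agrees for $a=x_i$ and $a=y_i$, so the first- and second-order contributions cancel in the difference. What remains is $E'[R_i(x_i)]-E'[R_i(y_i)]$, which is bounded in norm by $\|\Imm(\Zb)^{-1}\|^4(\|x_i\|^3+\|y_i\|^3)$ upon using that $E'$ is contractive as a $\B$-bimodule map in the OVI setup. Summing over $i$ and replacing each $\|x_i\|^3,\|y_i\|^3$ by the corresponding maximum produces the claimed estimate.

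The main obstacle is precisely the second step: verifying the claimed factorization of the first- and second-order contributions in each of the three independence regimes. For Boolean independence one differentiates, at the infinitesimal level, the alternating-product factorization of mixed moments; for monotone independence one must respect the nested ordering $\A_{x_1}\prec\cdots\prec\A_{x_N}$ and work with the infinitesimal version of the monotone moment formula; for the free case the required identities come from the operator-valued infinitesimal product constructed in Appendix \ref{Appendix:OVI}. In all three cases the Taylor-style analysis only reaches quadratic order in $a$, so the moment data matched by Assumption \ref{A:Infgeneral} is exactly what is needed to close the cancellation, without any appearance of fourth-moment quantities—consistent with the presence of only $\|a\|^3$-type norms in the final bound.
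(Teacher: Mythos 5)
Your strategy is a direct Lindeberg telescoping at the level of $E'$, whereas the paper avoids working directly with $E'$ by passing to the upper triangular probability space $(\widetilde{\A},\widetilde{E},\widetilde{\B})$ via Proposition~\ref{Inf.Prop}. There infinitesimal free/Boolean/monotone independence becomes ordinary free/Boolean/monotone independence with respect to $\widetilde{E}$, and Assumption~\ref{A:Infgeneral} for $(E,E')$ becomes Assumption~\ref{A:general} for $\widetilde{E}$; one can then literally reuse the vanishing of the first- and second-order Lindeberg terms already proven in Sections~\ref{section:Boolean}--\ref{section:Monotone} (and in \cite{BannaMaiBerry} for the free case), and read the $E'$-identity off the $(1,2)$-entry of the matrix equation. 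Your approach is not wrong in spirit, but as written it has two concrete gaps.

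First, you assert that ``$E'$ is contractive as a $\B$-bimodule map in the OVI setup.'' This is false: the definition only requires $E'$ to be self-adjoint, completely bounded, and vanishing on $1$. Since $E'$ is self-adjoint and completely bounded it decomposes as a difference $E' = E_1 - E_2$ of completely positive maps (this is Remark~\ref{Inf-Rem} in the paper), which only gives $\|E'[a]\|\leq 2\|a\|$. That factor of $2$ is the reason the theorem states $2N\,\|\Imm(\Zb)^{-1}\|^4(\cdots)$; your contractivity claim would yield $N$ instead of $2N$, so the arithmetic in your final line does not close.

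Second, and more seriously, the cancellation of the first- and second-order Lindeberg terms is the heart of the proof, and you correctly identify it as ``the main obstacle'' but then leave it as a list of intended steps. It is not enough to say that $E'[G_{S_i}\,a\,G_{S_i}]$ and $E'[G_{S_i}\,a\,G_{S_i}\,a\,G_{S_i}]$ ``can be expressed'' in terms of the matched moment data: one has to actually produce that factorization, and for Boolean and monotone independence this requires the full power-series expansion of the resolvents followed by the Leibniz-type infinitesimal moment formulas of Definition~\ref{defn:indep2}, carried out term by term. None of this is automatic because (i) Boolean/monotone independence is not preserved under passing to resolvents (the algebras are non-unital $\B$-bimodules and constants are not independent from anything), and (ii) the infinitesimal factorization introduces an extra sum over positions at which $E'$ is inserted, so the bookkeeping is strictly more involved than in the non-infinitesimal case. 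The paper sidesteps all of this by the $(\widetilde{\A},\widetilde{E},\widetilde{\B})$ device; if you want to proceed directly, you need to supply these computations rather than invoke them. Without them the argument is a plan, not a proof.
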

Here $x$ and $y$ are infinitesimally monotone independent in the sense that $\A_{x_1}\pprec \A_{x_2}\pprec \dots \pprec \A_{x_N}$ and $\A_{y_1}\pprec \A_{y_2}\pprec \dots \pprec \A_{y_N}$ over $\B$, see Section \ref{section:prel-OVI} for more details on the order.
\section{Preliminaries}

\subsection{OV Probability Theory}\label{section:prelOVPT}
Operator-valued probability theory allows considering conditional expectations and gives a wider range of applicability. To define such spaces, we start by introducing some notions and recalling some definitions. Let $\A$ be a unital algebra over $\comp$ and $\B$ a unital subalgebra of $\A$.  We say that $\mathcal{C}$ is a subalgebra of $\A$ over $\B$ if $\mathcal{C}$ is a subalgebra of $\A$ and $bc ,cb \in \mathcal{C}$, for all $b \in \B$ and $c \in \mathcal{C}$. A subalgebra of $\A$ over $\B$ may not contain the unit of $\A$. 

For $x_1,\dots,x_r \in \A$, let $\B\langle x_1,\dots,x_r \rangle_0$ denote the subalgebra of $\A$ over $\B$ consisting of finite sums of elements of $\{b_1x_{i_1}b_2\dots x_{i_n}b_{n+1}: b_i \in \B, n \geq 1, i_1,\dots,i_n \in \{1,\dots, r \}\}$.  Note that, in general, $\B$ is not contained in $\B\langle x_1,\dots,x_r \rangle_0$. 

Let $\mathcal{D}$ be another unital algebra containing $\B$ as a subalgebra. 
A map $f$ from $\A$ to $\mathcal{D}$ is called $\B$-linear if $f(b_1 x b_2 + y) = b_1 f(x) b_2 + f(y),$ for all  $b_1,b_2 \in \B$ and $x,y \in \A$. A $\B$-linear map $E$ with values in $\B$ is called a \textit{conditional expectation} if $E(b) = b,$ for all $b \in \B$. From now on we assume that $E$ is a conditional expectation in the above sense.  A \textit{random variable} is an element of $\A$ and a \textit{random vector} is an element of $\A^r$ for any $r \geq 1$.

A $W^\ast$-probability space is a pair $(\A, \varphi)$ where $\A$ is a von Neumann algebra and $\varphi$ is a faithful normal state on $\A$.
$(\A,E,\B)$ is called an \emph{operator-valued $C^*$-probability space} if $\A$ is a unital $C^*$-algebra, $\B$ is a unital subalgebra, and $E:\A\to\B$ is a conditional expectation that is completely positive. Working in such spaces allows conditioning with respect to $E$ which carries many similar properties as the scalar-valued expectation $\varphi$. In addition, if $(\A,\varphi)$ is a tracial $W^\ast$-probability space and $\B$ is a von Neumann subalgebra of $\A$, then there exists a unique conditional expectation $E:\A\to \B$ that preserves the trace, i.e. $\varphi\circ E=\varphi$. 
Note that in the tracial setting, the existence of a trace-preserving conditional expectation $E$ was proved in \cite{umegaki1954conditional}. As in the Boolean and monotone cases, the state $\varphi$ is non-tracial, we will only consider the settings where there exists a conditional expectation $E$ with $\varphi\circ E=\varphi$.

A main difference, however, is that notions of positivity for operator-valued distributions need to hold for all matrix amplifications. 
It is also needed to extend the different notions of independence to the operator-valued setting, where independence is defined with respect to the conditional expectation $E$. This was done over time, where the notion of OV free independence was first introduced in \cite{voiculescu1995operations}. On the other hand, Boolean independence, introduced by Speicher and Woroudi \cite{speicher1997boolean}, and monotone independence, introduced by Muraki \cite{Muraki97,muraki2000monotonic,muraki2001monotonic}, were extended later on to the operator-valued setting (see \cite{popa2009new,popa2008combinatorial,hasebe2014operator,skeide2004independence}). 
We recall these notions of independence in the following definition. 

\begin{definition}\label{defn:indep}
Given an OV $C^*$-probability space $(\A,E,\B)$. 
\begin{itemize}
    \item Sub-algebras $(\A_i)_{i\in I}$ of $\A$ that contain $\B$ are called \emph{
freely independent (or just free for short) over $\B$} if for $i_1,i_2,\dots,i_n\in I$, $i_1\neq i_2 \neq i_3\dots\neq i_n$, and $x_j\in \A_{i_j}$ with $E[x_{j}]=0$ for all $j=1,2,\dots,n$, then
\begin{eqnarray*}
E[x_1\cdots x_n] =0.
\end{eqnarray*}
    \item (The possibly non-unital) $\B$-bimodule subalgebras $(\A_i)_{i\in I}$ of $\A$ are called \emph{Boolean independent over $\B$} if for all $n\in\mathbb{N}$ and $x_1,\dots, x_n\in\mathcal{A}$ such that $x_j\in\mathcal{A}_{i_j}$ where $i_1\neq i_2\neq \dots \neq i_n\in I$, we have
\begin{eqnarray*}
E[x_1\cdots x_n] = E[x_1]\cdots E[x_n].
\end{eqnarray*}
    \item 
Assume that $\Lambda$ is equipped with a linear order $<$. (The possibly non-unital) $\B$-bimodule subalgebras $(\A_i)_{i\in \Lambda}$ of $\A$ are called \emph{monotone independent over $\B$} if 
\begin{eqnarray*}
E[x_1\cdots x_j \cdots x_n] = E[x_1\cdots x_{j-1}E[x_j]x_{j+1}\cdots x_n]
\end{eqnarray*}
whenever $x_j\in\A_{i_j}, i_j\in \Lambda$ for all $j$ and $ i_{j-1} < i_j > i_{j+1}$, where one of the inequalities is eliminated if $j=1$ or $j=n$.
\end{itemize}
\end{definition}
A crucial difference between monotone independence from the free and Boolean independence is the lack of symmetry, so when talking about monotone tuples of variables we need to specify an order.  Therefore, we say subalgebras $\A_{\alpha}$ and $\A_{\beta}$ are monotone independent in the sense that they are monotone independent with the order $\alpha<\beta$ and we  denote it by $A_{\alpha}\prec A_{\beta}.$ 

Given an OV probability space $(\A,E,\B)$, elements $(x_i)_{i \in I}$ of $\mathcal{A}$ are said to be freely independent over $\mathcal{B}$ if the unital algebras generated by elements $x_i$ over $\mathcal{B}$ are freely independent. Similarly, $(x_i)_{i\in I}$ are said to be Boolean (respectively monotone) independent if the non-unital $\B$-bimodule subalgebras generated by $x_i, (i\in I)$ over $\mathcal{B}$ form a Boolean (respectively monotone) independent family. Since the notion of monotone independence depends on the order, we  write $x\prec y$ or $\A_x \prec \A_y$ whenever $x,y$ are monotone independent.

As in the scalar, noncommutative joint distributions are defined in the operator-valued realm as the collection of all possible $\B$-valued mixed moments. 
\begin{definition}[$\B$-valued joint distributions]
Let $X=(x_1,\dots, x_r)$ be a random vector in an OV $C^*$-probability space $(\A,E,\B)$  and let $i_1,\dots, i_{k} \in \{1,\dots, r\}$ for $n \geq 1$. The multilinear map $\mu^X_{i_1,\dots,i_{k}}$ defined by 
\[
\mu^X _{i_1,\dots,i_{k}}(b_1, \dots, b_{k-1}) = E(x_{i_1}b_1\cdots b_{k-1}x_{i_{k}}) 
\]
is called the $(i_1,\dots, i_{k})$-moment of $X$. Then the \emph{ $\B$-distribution} of $X$ is defined as the collection of all possible $(i_1,\dots,i_k)$-moments of $X$:
\[
\mu^X=\big\{ \mu^X _{i_1,\dots,i_{k}}(b_1, \dots, b_{k-1}) \ | \ i_1,\dots, i_{k} \in \{1,\dots, r\}, b_1, \dots, b_{k-1} \in \B
\big\}.
\]
\end{definition}  
Moreover in the OV setting, convergence in distribution means convergence in norm of all joint $\B$-valued moments. More precisely, for each $n\in \mathbb{N}$, $X_n=(x_n^{(i)})_{i \in I} \in \A_n$ and $X=(x_i)_{i \in I} \in \A$, we say $(X_n)_n$ converges to $X$ in distribution over $\B$ if for any $k\geq 0$, $i_1,\dots, i_{k} \in \{1,\dots, r\}$ and $b_0,b_1,\dots,b_k\in \B$, 
$$
\|\mu^{X_n} _{i_1,\dots,i_{k}}(b_1, \dots, b_{k-1})-\mu^X _{i_1,\dots,i_{k}}(b_1, \dots, b_{k-1})\|\longrightarrow 0 \quad \text{ as } \quad n \to\infty.
$$
Let us now introduce the notion of Cauchy transforms, which is a crucial tool to study distributions. Suppose that $(\A,E,\B)$ is an OV $C^*$-probability space. The resolvent of an element $x\in \A$, is given by $G_x(b)=(b-x)^{-1}$ for any $b\in \B$ such that $b-x$ is invertible. If $x=x^*\in \A$, then the \emph{OV Cauchy transform of $x$} is given by
$$
\G_x^{\B}(b):= E[G_x(b)] \quad \text{ for all }b\in \bH^+(\B),
$$
where $\bH^+(\B)$ denotes the OV upper half plane of $\B$ defined by
$$
\bH^+(\B):=\big\{ b\in \B \mid \Imm(\Zb)=\frac{1}{2i}(b-b^*)>0 \big\}.
$$
Note that for each $x=x^*\in \A$, $b-x$ is invertible for all $b\in \bH^+(\B)$ over which $\G_x^{\B}$ is hence well-defined. If $\B=\C$, then we have the (scalar-version) \emph{Cauchy transform of $x$} 
$$
\G_x(z):= \varphi[G_x(z)] \quad \text{ for all }z\in \C^+,
$$
where $\C^+$ denotes the upper half complex plane. Note that 
${\G_x}$ encodes all the moments of $x$; indeed, for any $z\in \C^+$ such that $|z|<\|x\|$, we have 
$$\G_x(z)=\sum\limits_{n=1}^{\infty}\frac{\varphi(x^n)}{z^{n+1}}.$$
For later use, we note that for a given $x=x^*\in \A$, we obtain by the resolvent identity 
$$
\|G_x(z)\|_{L^2(\A,\varphi)}^2 =\varphi\left[(\bar{z}-x)^{-1}(z-x)^{-1}\right]=-\frac{\Imm(\G_x(z))}{\Imm(z)} \qquad \text{for all }z\in\mathbb{C}^+,
$$
which together with the fact that $-\frac{1}{\pi} \Im(\G_x(t+i\epsilon))\, \mathrm{d} t$ is a probability measure for every $\epsilon>0$ yields that
$$
\int_{\mathbb{R}}\|G_x(t+i\varepsilon)\|_{L^2(\A,\varphi)}^2 dt =\frac{\pi}{\varepsilon} \qquad \text{for all }\varepsilon>0.
$$
However in the OV setting, the OV Cauchy transform $\G_x^{\B}$ alone does not encode all possible moments of $x$ and one would need for this purpose all the matrix amplifications $\{\G_{1_k \otimes x}^{M_k(\B)}\}_{k\in \mathbb{N}}$ of $\G_x^{\B}$ defined for any $k \in \mathbb{N}$ by
$$\G_{1_k \otimes x}^{M_k(\B)}(b):= id_k \otimes E[G_{1_k\otimes x}(b)] \quad  \text{ for all }b\in M_k(\B) \text{ such that }b-1_k \otimes x \text{ is invertible. } 
$$ 
For more details on this matter, see \cite{voiculescu1995operations}.

The combinatorial approach for OV free probability theory was first introduced by Speicher \cite{speicher1998combinatorial}. The OV Boolean case was later developed by Popa \cite{popa2009new} while the OV monotone case was investigated in \cite{hasebe2014operator,popa2008combinatorial}.   We review now  notions and properties of OV cumulants, which play a key role in the combinatorial counterpart of OV non-commutative probability theory. 

We denote by $NC(n)$ the set of non-crossing partitions of $[n]:=\{1,\dots,n\}$, namely, partitions $\pi=\{V_1,\dots,V_r\}$ such that $V_i$ and $V_j$ do not cross for any $1\leq i,j \leq r$. The blocks of $\pi$ are $V_1,\dots,V_r$ and the size (in this case $r$) of the partition is denoted by $|\pi|$. We denote furthermore by $NC_2(n)$ the set of non-crossing pair partitions of $[n]$. In order to describe cumulants for various notions of independence, we give further notation:
\begin{notation}\label{notation-f_pi}
Suppose that $\A$ is a unital algebra, and $\B$ is a unital sub-algebra of $\A$. Let $\{f_n^{\B}: \A^n \to \B\}_n$ be a family of multilinear maps. Given a partition $\pi \in NC(n)$, we define the map $f_{\pi}^{\B}:\A^n\to \B$ recursively as follows: 
\begin{itemize}
    \item For $\pi=1_n:= \{1,\dots,n\}$, define $f_{\pi}^{\B}:=f_n^{\B}$. \\
    \item For $\pi\in NC(n)\setminus \{1_n\}$, pick up an interval block $V=\{l+1,\dots,l+k\}\in\pi$ and define 
    \begin{eqnarray*}
    f_{\pi}^{\B}(x_1,\dots,x_n) &=& f_{\pi'}^{\B}(x_1,\dots,x_lf_k^{\B}(x_{l+1},\dots,x_{l+k}),x_{l+k+1},\dots,x_n) \\
    &=& 
    f_{\pi'}^{\B}(x_1,\dots,x_l,f_k^{\B}(x_{l+1},\dots,x_{l+k})x_{l+k+1},\dots,x_n)
    \end{eqnarray*}
    where $\pi'=\pi\setminus V \in NC(n-k)$. 
\end{itemize}
\end{notation}

\begin{definition}
Let $(\A,E,\B)$ be an OV $C^*$-probability space. The \emph{OV free cumulants} $\{r_n^{\B}:\A^n\to \B\}_n$, the \emph{OV Boolean cumulants} $\{\beta_n^{\B}:\A^n\to \B\}_n$, and the \emph{OV monotone cumulants} $\{h_n^{\B}:\A^n\to \B\}_n$ are recursively defined for all $n\in \mathbb{N}$ and $x_1,\dots,x_n\in \A$ via the following moment-cumulant formulas: 
    \begin{eqnarray*}
    E[x_1\dots x_n] &=& \sum\limits_{\pi\in NC(n)} r_{\pi}^{\B} (x_1,\dots,x_n);  \\
    E[x_1\dots x_n] &=& \sum\limits_{\pi\in I(n)} \beta_{\pi}^{\B} (x_1,\dots,x_n); \\
    E[x_1\dots x_n] &=& \sum\limits_{\pi\in NC(n)} \frac{1}{\tau(\pi)!} h_{\pi}^{\B} (x_1,\dots,x_n); 
    \end{eqnarray*}
    where $I(n)$ denotes the subset of $NC(n)$ consisting of all interval partitions of $[n]$ and
    $$
    \tau(\pi)!:= \prod_{V\in \pi} \Big|\big\{W\in \pi \mid W\subseteq \{min(V),\dots,max(V)\}\big\}\Big|.
    $$
\end{definition}
A crucial property of OV free and Boolean cumulants is the fact that they capture independence over $\B$. Indeed, the notions of free and Boolean independence can be characterized by the property of vanishing mixed cumulants as described below:
\begin{proposition}\label{Vanishing Prop}
Suppose that $(\A,E,\B)$ is an OV $C^*$-probability space. 
\begin{itemize}
    \item Unital subalgebras $\A_1,\dots,\A_n$ that contain $\B$ are free over $\B$ if and only if for $n\geq 2$ and $i_1,\dots,i_s\in [n]$ which are not all equal and for $x_1\in \A_{i_1},\dots,x_s\in \A_{i_s}$, we have $r_s^{\B}(x_1,\dots,x_s)=0.$ \\
    \item $\B$-bimodule subalgebras $\A_1,\dots,\A_n$ are Boolean independent over $\B$ if and only if for $n\geq 2$ and $i_1,\dots,i_s\in [n]$ which are not all equal and for $x_1\in \A_{i_1},\dots,x_s\in \A_{i_s}$, we have $\beta_s^{\B}(x_1,\dots,x_s)=0.$ \\
\end{itemize}
\end{proposition}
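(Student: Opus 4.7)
The plan is to adapt the Lindeberg swapping method to the infinitesimal conditional expectation $E'$. Define the hybrid sums $A_k := \sum_{i<k} x_i + \sum_{i>k} y_i$ for $k=1,\dots,N$, so that $A_k + x_k$ and $A_k + y_k$ differ by a single swap and give the telescoping decomposition
\[
E'[G_{\ux_N}(\Zb)] - E'[G_{\uy_N}(\Zb)] = \sum_{k=1}^{N} \big( E'[G_{A_k+x_k}(\Zb)] - E'[G_{A_k+y_k}(\Zb)] \big).
\]
It then suffices to control each swap by a constant multiple of $\|\Imm(\Zb)^{-1}\|^4(\|x_k\|^3+\|y_k\|^3)$ and sum over $k$.

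For a fixed $k$, iterate the resolvent identity $G_{A+x}(\Zb)=G_A(\Zb)+G_A(\Zb)\,x\,G_{A+x}(\Zb)$ three times to write
\[
G_{A_k+x_k}(\Zb) = G_{A_k}(\Zb) + G_{A_k}(\Zb)\, x_k\, G_{A_k}(\Zb) + G_{A_k}(\Zb)\, x_k\, G_{A_k}(\Zb)\, x_k\, G_{A_k}(\Zb) + R_k(x_k),
\]
with remainder $R_k(x_k) = G_{A_k}(\Zb)\,x_k\,G_{A_k}(\Zb)\,x_k\,G_{A_k}(\Zb)\,x_k\,G_{A_k+x_k}(\Zb)$, and likewise for $y_k$. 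Apply $E'$ to the difference and exploit the infinitesimal independence of $\A_{x_k}$ from the $\B$-bimodule algebra generated by $A_k$ (and similarly for $y_k$), as codified in Section~\ref{section:prel-OVI} and Appendix~\ref{Appendix:OVI}: the zeroth-order terms $E'[G_{A_k}(\Zb)]$ cancel trivially; the first-order terms vanish because $E[x_k]=E'[x_k]=0$, so the infinitesimal independence forces $E'[G_{A_k}(\Zb)\,x_k\,G_{A_k}(\Zb)] = E'[G_{A_k}(\Zb)\,y_k\,G_{A_k}(\Zb)] = 0$; and the second-order terms reduce, via the appropriate infinitesimal free/Boolean/monotone factorization, to expressions depending only on the quadratic moments $E[x_k b x_k]$ and $E'[x_k b x_k]$ paired with the $\B$-valued moments of $A_k$ under $E$ and $E'$, which by Assumption~\ref{A:Infgeneral} agree with those of $y_k$ and therefore also cancel. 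Only the third-order remainders survive; the standard bound $\|G_T(\Zb)\| \leq \|\Imm(\Zb)^{-1}\|$ for any self-adjoint $T$ gives $\|E'[R_k(x_k)]\| \leq \|\Imm(\Zb)^{-1}\|^4 \|x_k\|^3$ and analogously for $y_k$. Summing over $k=1,\dots,N$ and bounding $\|x_k\|^3,\|y_k\|^3$ by their respective maxima yields the advertised estimate, with the overall constant $2$ absorbing the two signed remainders per swap.

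The main obstacle is the second-order matching step. For the free case, this follows from the vanishing of mixed infinitesimal free cumulants. For Boolean and monotone independence, however, the resolvent $G_{A_k}(\Zb)$ does not itself lie in the $\B$-bimodule subalgebra generated by $A_k$, a subtlety already flagged in the introduction; one must either expand $G_{A_k}(\Zb)$ as a norm-convergent Neumann series in $\Zb^{-1}$ and apply the Boolean/monotone factorization term-by-term, or extend the factorization directly to resolvents. The infinitesimal layer further demands that these factorizations for $E'$ be compatible with those for $E$ in the sense of the formal derivative of $E_t = E + tE'$ at $t=0$, which is precisely what the algebraic construction of the operator-valued infinitesimal free, Boolean and monotone products in Appendix~\ref{Appendix:OVI} is designed to ensure.
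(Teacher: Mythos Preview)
Your proposal does not address the stated proposition at all. Proposition~\ref{Vanishing Prop} is the characterization of operator-valued free and Boolean independence by the vanishing of mixed cumulants; it is a purely combinatorial statement about the moment--cumulant relations and has nothing to do with resolvents, Cauchy transforms, the Lindeberg method, or the infinitesimal map $E'$. What you have written is instead a sketch of the proof of Theorem~\ref{eq:theo-Inf}, the Berry--Esseen bound for the infinitesimal Cauchy transform. The paper, for its part, does not prove Proposition~\ref{Vanishing Prop}: it is recorded as a standard fact from the literature (Speicher~\cite{speicher1998combinatorial} for the free case, Popa~\cite{popa2009new} for the Boolean case) and used without argument.

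If your actual intent was to prove Theorem~\ref{eq:theo-Inf}, your sketch is in the right spirit but differs from the paper's route in one essential respect. You attempt to verify the first- and second-order cancellations directly at the level of $E'$ via the infinitesimal free/Boolean/monotone factorization rules; this is exactly the technicality you flag at the end, and carrying it out for Boolean and monotone would require redoing the power-series expansions of Sections~\ref{section:Boolean}--\ref{section:Monotone} for $E'$ separately. The paper avoids this entirely: via Proposition~\ref{Inf.Prop} it passes to the upper-triangular operator-valued space $(\widetilde{\A},\widetilde{E},\widetilde{\B})$, where infinitesimal independence becomes ordinary independence over $\widetilde{\B}$, applies the already-established cancellations from \cite{BannaMaiBerry} and Sections~\ref{section:Boolean}--\ref{section:Monotone} to $\widetilde{E}$, and then reads off the $(1,2)$-entry to obtain the telescoping identity~\eqref{Inf-telescopingsum} for $E'$ at no extra cost. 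Note also that your remainder bound $\|E'[R_k(x_k)]\|\le \|\Imm(\Zb)^{-1}\|^4\|x_k\|^3$ tacitly uses $\|E'\|\le 1$, which is not assumed; the paper invokes $\|E'[a]\|\le 2\|a\|$ from Remark~\ref{Inf-Rem}, which is the source of the constant $2$ in~\eqref{eq:theo-Inf Eqn}.
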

In the free and Boolean case, it is easy to see - using the above property of vanishing of mixed cumulants - that independence is preserved when lifted to matrices.  In the following proposition, we prove directly and without relying on cumulants that the statement remains valid in the case of monotone independence. 
\begin{proposition}\label{prop:lift-monotoneIND-to-matrices}
Let $(\A,E,\B)$ be an OV $C^*$-probability space and $\A_1,\dots,\A_n$ be subalgebras over $\B$. If $\A_1\prec A_2\prec \cdots \prec \A_n$ over $\B$, then for each $N\in \mathbb{N}$, $M_N(\A_1)\prec M_N(\A_2) \prec \cdots \prec  M_N(\A_n)$ over $M_N(\B)$ in the OV $C^*$-probability space $(M_N(\A),id_N\otimes E,M_N(\B))$. 
\end{proposition}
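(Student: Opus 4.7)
The plan is a direct verification of the defining relation in Definition \ref{defn:indep} for the matrix amplifications, bypassing cumulants as required. As a preliminary I would note that $M_N(\A_l)$ is an $M_N(\B)$-bimodule subalgebra of $M_N(\A)$: multiplying a matrix with entries in $\A_l$ on either side by one with entries in $\B$ yields a matrix whose entries are finite sums of elements of $\B\A_l\cup\A_l\B\subseteq\A_l$, using that $\A_l$ is already a $\B$-bimodule. So the setup of the definition transfers cleanly from $(\A,E,\B)$ to $(M_N(\A),\id_N\otimes E,M_N(\B))$.

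The main step is to fix any index sequence $i_1,\dots,i_k\in\{1,\dots,n\}$ together with a peak position $l$, that is, an index at which $i_{l-1}<i_l>i_{l+1}$ (with the convention that one inequality is dropped when $l=1$ or $l=k$), and matrices $X_s\in M_N(\A_{i_s})$, and to prove
\[
(\id_N\otimes E)[X_1\cdots X_k] = (\id_N\otimes E)\!\left[X_1\cdots X_{l-1}\,(\id_N\otimes E)[X_l]\,X_{l+1}\cdots X_k\right].
\]
Expanding $X_s=\sum_{p,q}e_{pq}\otimes x^{(s)}_{pq}$ in terms of matrix units, with $x^{(s)}_{pq}\in\A_{i_s}$, the $(p,q)$-entry of $X_1\cdots X_k$ is the scalar linear combination
\[
\sum_{r_1,\dots,r_{k-1}} x^{(1)}_{pr_1}x^{(2)}_{r_1r_2}\cdots x^{(k)}_{r_{k-1}q},
\]
whose factors sit in $\A_{i_1},\dots,\A_{i_k}$ in exactly the prescribed order. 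Applying $E$ term by term and invoking the scalar monotone relation for $(\A_{i_s})_s$ at the peak $l$, each summand becomes
$E\!\left[x^{(1)}_{pr_1}\cdots x^{(l-1)}_{r_{l-2}r_{l-1}}\,E[x^{(l)}_{r_{l-1}r_l}]\,x^{(l+1)}_{r_lr_{l+1}}\cdots x^{(k)}_{r_{k-1}q}\right]$;
resumming over $r_1,\dots,r_{k-1}$ reassembles exactly the $(p,q)$-entry of the right-hand side above, using that the inserted scalar factor $E[x^{(l)}_{r_{l-1}r_l}]\in\B$ is absorbed by the $\B$-bimodule structure of the neighbouring $\A_{i_{l\pm 1}}$.

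The whole argument is essentially bookkeeping. The only mildly subtle point is that the scalar monotone identity must be legitimately applicable term by term inside the double sum, which it is, because $E$ and $\id_N\otimes E$ are $\comp$-linear and commute with the matrix-unit expansion, and the peak condition at $l$ is the same for every choice of inner indices $r_1,\dots,r_{k-1}$. I do not expect any serious obstacle beyond careful index management; in particular, I avoid any appeal to cumulants, in line with the authors' stated intent to prove the matrix lift for the monotone case directly.
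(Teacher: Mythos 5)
Your proof is correct and takes essentially the same approach as the paper: both expand the matrix product entrywise, apply the scalar operator-valued monotone relation term by term at the peak index, and resum to obtain the identity for $\id_N\otimes E$.
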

\begin{proof}
Let $A_m=[a^{(m)}_{r,s}]_{r,s=1}^N\in M_N(\A_{i_m})$ for each $m$ where $i_1,\cdots,i_k\in [n]$ with $i_1\neq i_2\neq \cdots \neq i_n$. Suppose that $i_{k_0-1}<i_{k_o}>i_{k_0+1}$, then we observe that for any $i,j\in [N]$,
\begin{eqnarray*}
&&\Big[(id_N\otimes E)[A_1A_2\cdots A_{k_{0}-1}A_{k_0}A_{k_0+1}\cdots A_n] \Big]_{i,j} \\
&=& \sum_{i_1,\cdots,i_{n-1}\in [N]}E\big[a_{i,i_1}^{(1)}a_{i_1,i_2}^{(2)}\cdots a^{(k_0-1)}_{i_{k_0-2},i_{k_0-1}}a^{(k_0)}_{i_{k_0-1},i_{k_0}}a^{(k_0+1)}_{i_{k_0},i_{k_0+1}}\cdots a^{(n)}_{i_{n-1},j} \big] \\
&=& \sum_{i_1,\cdots,i_{n-1}\in [N]}E\big[a_{i,i_1}^{(1)}a_{i_1,i_2}^{(2)}\cdots a^{(k_0-1)}_{i_{k_0-2},i_{k_0-1}}E\big[a^{(k_0)}_{i_{k_0-1},i_{k_0}}\big]a^{(k_0+1)}_{i_{k_0},i_{k_0+1}}\cdots a^{(n)}_{i_{n-1},j} \big] \\ 
&=& E\Big[ \sum_{i_1,\cdots,i_{n-1}\in [N]}a_{i,i_1}^{(1)}a_{i_1,i_2}^{(2)}\cdots a^{(k_0-1)}_{i_{k_0-2},i_{k_0-1}}E\big[a^{(k_0)}_{i_{k_0-1},i_{k_0}}\big]a^{(k_0+1)}_{i_{k_0},i_{k_0+1}}\cdots a^{(n)}_{i_{n-1},j} \Big] \\
&=& \Big[(id_N\otimes E)\big[A_1A_2\cdots A_{k_{0}-1}(Id_N\otimes E)\left[A_{k_0}\right]A_{k_0+1}\cdots A_n\big] \Big]_{i,j}, 
\end{eqnarray*}
which implies that $$(id_N\otimes E)[A_1\cdots A_{k_{0}-1}A_{k_0}A_{k_0+1}\cdots A_n] = (id_N\otimes E)[A_1\cdots A_{k_{0}-1}(Id_N\otimes E)[A_{k_0}]A_{k_0+1}\cdots A_n], $$
 from which we conclude that monotone independence over $\B$ lifts to the matrix level. 
\end{proof}

The development of OV probability spaces and different notions of independence led naturally to proving OV counterparts of the central limit theorem and studying their limiting laws. As in the scalar setting, the limiting distributions are universal in the sense that they only depend on the initial distribution of the variables through their variance. Note that for a $\B$-valued element $x$ with $E[x]=0$, the variance is given by the completely positive map $\eta_x:\B\to\B$, $\eta_x(b):=E[xbx]$ rather than a positive scalar $\sigma^2_x$. The free case and its OV semicircular limiting distribution were first proven in \cite{voiculescu1995operations} and \cite{speicher1998combinatorial} followed by the Boolean and monotone cases with their OV Bernoulli and arcsine limiting distributions in \cite{BVP-OV-SAB} and \cite{hasebe2014operator}. Before recalling their definitions precisely, we refer the reader to \cite[Chapter 6]{Jekel-notes} for more discussion around the OV central limit theorems. 

\begin{definition}
Let $(\A,E,\B)$ be an OV $C^*$-probability space and  $x\in \A$ with $E[x]=0$. We say that $x$ is
\begin{itemize}
    \item  a \emph{$\B$-valued semi-circular element} with variance $\eta$ if 
    \begin{align*}
        E[b_0xb_1\dots xb_k] = \begin{cases} \sum\limits_{\pi\in NC_2(k)}b_0 \eta_{\pi}(b_1,\dots,b_{k-1})b_k & \text{ if }k \text{ is even }  \\ 0 & \text{ if }k \text{ is odd.}\end{cases} 
    \end{align*}
    \item a \emph{$\B$-valued Bernoulli element} with variance $\eta$ if 
     \begin{align*}
        E[b_0xb_1\dots xb_k] = \begin{cases} b_0 \eta(b_1)b_2\dots\eta(b_{k-1})b_k & \text{ if }k \text{ is even }  \\ 0 & \text{ if }k \text{ is odd.}\end{cases} 
     \end{align*}   
     \item   a \emph{$\B$-valued arcsine element} with variance $\eta$ if
     \begin{align*}
        E[b_0xb_1\dots xb_k] = \begin{cases} \sum\limits_{\pi\in NC_2(k)}\frac{1}{\tau(\pi)!}b_0\eta_{\pi}(b_1,\dots,b_{k-1})b_k & \text{ if }k \text{ is even }  \\ 0 & \text{ if }k \text{ is odd.}\end{cases} 
    \end{align*}
\end{itemize}
where 
$\eta_\pi: \B^{k-1} \rightarrow \B$, $\eta_\pi(b_1,\dots, b_{k-1}) = E_\pi [xb_1,xb_2, \dots, xb_{k-1},x]$.
\end{definition}
Note that by Notation \ref{notation-f_pi}, $E_\pi$ breaks for a pairing $\pi \in NC_2(k)$ the product  into pairs of the form $E[xbx]=\eta_x(b)$, where $\pi$ just indicates the way in which way the pairs are nested.
\begin{remark}
Note that $x$ is a $\B$-valued semicircular (respectively Bernoulli, arcsine) element with variance $\eta$ if and only if the OV free (respectively Boolean, monotone) cumulants $\kappa_n^{\B}$ are given by 
\begin{equation}\label{CLT:cumulants}
\kappa_n^{\B}(xb_1,xb_2,\dots,xb_{n-1},x)= \begin{cases}\eta(b_1) &\text{ if } n=2 , \\ 0 &\text{ if }n\neq 2. \end{cases} 
\end{equation}
We refer to \cite{BVP-OV-SAB} and \cite[Chapter 6]{Jekel-notes} for more details.
\end{remark}

\begin{theorem} 
Suppose that $(\A,E,\B)$ is an OV $C^*$-probability space, and $\{x_i\}_{i\in \mathbb{N}}$ is a sequence of identically distributed freely (respectively Boolean, monotone) independent elements that are centered, $E[x_1]=0$, and whose variance is given by a completely positive map $\eta$. For each given $N$, we let 
$$
S_N=\frac{1}{\sqrt{N}}(a_1+\dots+a_N). 
$$
Then $(S_N)_N$ converges to $s$ in distribution over $\B$,  
where $s$ is a $\B$-valued semi-circular (respectively Bernoulli, arcsine) element with variance $\eta$.
\end{theorem}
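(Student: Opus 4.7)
The plan is to reduce the problem to a cumulant-level computation via the three moment-cumulant formulae recalled in the preliminaries. Fix $n\geq 1$ and $b_1,\dots,b_{n-1}\in\B$; the goal is to show that
\[
E[S_N b_1 S_N b_2\cdots b_{n-1} S_N] \longrightarrow E[s b_1 s b_2 \cdots b_{n-1} s] \quad \text{in norm as } N\to\infty,
\]
where $s$ denotes the target $\B$-valued semi-circular, Bernoulli, or arcsine element.

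The first step is to compute $\kappa_n^\B(S_N b_1,\dots,S_N b_{n-1},S_N)$, where $\kappa_n^\B$ stands for the corresponding free ($r_n^\B$), Boolean ($\beta_n^\B$), or monotone ($h_n^\B$) cumulant. By multilinearity,
\[
\kappa_n^\B(S_N b_1,\dots,S_N b_{n-1}, S_N) = N^{-n/2}\sum_{i_1,\dots,i_n=1}^N \kappa_n^\B(a_{i_1}b_1,\dots, a_{i_{n-1}}b_{n-1}, a_{i_n}).
\]
In the free and Boolean settings, Proposition~\ref{Vanishing Prop} forces $i_1=\dots=i_n$; using that the $a_j$ are identically distributed, the sum collapses to $N\cdot\kappa_n^\B(a_1 b_1,\dots,a_1)$, giving an overall factor of $N^{1-n/2}\kappa_n^\B(a_1 b_1,\dots,a_1)$. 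In the monotone case, mixed cumulants do not vanish outright; instead, one invokes the \emph{additivity of operator-valued monotone cumulants} for i.i.d.\ monotone-ordered tuples, which produces exactly the same $N^{1-n/2}$ scaling.

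The second step is to pass to the limit. For $n=1$, $\kappa_1^\B(S_N)=E[S_N]=0$. For $n=2$, the prefactor $N^{1-n/2}=1$, so $\kappa_2^\B(S_N b_1,S_N)=\kappa_2^\B(a_1 b_1,a_1)=E[a_1 b_1 a_1]=\eta(b_1)$. For $n\geq 3$, the factor $N^{1-n/2}$ tends to $0$ while $\|\kappa_n^\B(a_1 b_1,\dots,a_1)\|$ is bounded by $C_n\|a_1\|^n\|b_1\|\cdots\|b_{n-1}\|$, so the $n$-th cumulant tends to zero in norm. These limiting cumulants are precisely those characterising the target element $s$ as recorded in \eqref{CLT:cumulants}.

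Finally, substituting the above asymptotics into the moment-cumulant formula (and using that each nested evaluation $\kappa_\pi^\B$ appearing in the expansion of a fixed mixed moment is a continuous multilinear combination of cumulants of order at most $n$) yields the desired norm convergence $E[S_N b_1 S_N\cdots b_{n-1}S_N]\longrightarrow E[sb_1 s\cdots b_{n-1}s]$. The hard step, as flagged above, is the monotone case: unlike the free and Boolean settings it does not have a clean vanishing-of-mixed-cumulants property, and the weights $1/\tau(\pi)!$ appear in the moment-cumulant formula; the additivity of OV monotone cumulants under i.i.d.\ monotone independent summands is the key technical input that makes the argument go through uniformly across the three cases.
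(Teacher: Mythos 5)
The paper does not prove this theorem; it is recalled as a known result with references to \cite{voiculescu1995operations}, \cite{speicher1998combinatorial}, \cite{BVP-OV-SAB} and \cite{hasebe2014operator}, so there is no in-paper argument to compare against. Your cumulant route is correct and is essentially the standard one that these references use: expand $\kappa_n^\B(S_N b_1,\dots,S_N)$ by multilinearity, observe that in the free and Boolean cases vanishing of mixed cumulants (Proposition \ref{Vanishing Prop}) collapses the $N^n$-term sum to the $N$ diagonal terms, and deduce the $N^{1-n/2}$ scaling; the limiting cumulants then match \eqref{CLT:cumulants}, and the moment--cumulant formula transfers cumulant convergence to moment convergence since each $\kappa_\pi^\B$ is a finite nested multilinear (hence continuous) expression. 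Your identification of the monotone case as the delicate step is apt: there is no vanishing-mixed-cumulants property, and the needed input is the operator-valued analogue of the Hasebe--Saigo additivity $h_n^\B\big((\textstyle\sum_i a_i) b_1,\dots,\textstyle\sum_i a_i\big)=N\,h_n^\B(a_1 b_1,\dots,a_1)$ for i.i.d.\ monotone-ordered summands, which is established in \cite{hasebe2014operator} and which the paper itself cites (in the scalar form) in Section \ref{section:infinitelydivisiblemeasures}. Two minor points worth making explicit in a final write-up: (a) the bound $\|\kappa_n^\B(a_1 b_1,\dots,a_1)\|\le C_n\|a_1\|^n\prod_j\|b_j\|$ follows because $E_\pi$ and hence each cumulant is a finite signed combination of nested conditional expectations, each contractive; and (b) convergence of joint moments of the form $E[S_N b_1\cdots S_N]$ already gives convergence of all $\B$-valued moments since any $b_0(\cdots)b_k$ can be pulled out of $E$ by $\B$-bimodularity. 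With these glosses your proof is complete.
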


\subsection{OV Infinitesimal Probability}\label{section:prel-OVI}

One of the generalizations of OV free probability is OV infinitesimal (OVI) free probability that was first studied in \cite{curran2011asymptotic}. Notions of OVI Boolean independence and OVI monotone independence were later developed in \cite{perales2021operator}. In this subsection, we will review the framework of various notions of infinitesimal independence.    

For a given OV $C^*$-probability space $(\A,E,\B)$, if we have an additional self-adjoint linear $\B$-bimodule map $E':\A\to \B$ that is completely bounded with $E'(1)=0,$ then $(\A,E,E',\B)$ is called an \emph{OV $C^*$-infinitesimal probability space}. 

\begin{definition}\label{defn:indep2}
Given an OV $C^*$-infinitesimal probability space $(\A,E,E',\B)$. 
\begin{itemize}
\item 
Sub-algebras $(\A_i)_{i\in I}$ of $\A$ that contain $\B$ are called \emph{
infinitesimally freely independent (or just infinitesimally free for short) over $\B$} if 
for $i_1,i_2,\dots,i_n\in I$, $i_1\neq i_2 \neq i_3\dots\neq i_n$, and $x_j\in \A_{i_j}$ with $E[x_{j}]=0$ for all $j=1,2,\dots,n$, then
\begin{eqnarray*}
E[x_1\dots x_n] &=&0; \\
E'[x_1\dots x_n] &=& \sum\limits_{j=1}^n E[x_1\dots x_{j-1}E'[x_j]x_{j+1}\dots x_n]. 
\end{eqnarray*}
    \item 
(The possibly non-unital) $\B$-bimodule subalgebras $(\A_i)_{i\in I}$ of $\A$ are called \emph{infinitesimally Boolean independent over $\B$} if for all $n\in\mathbb{N}$ and $x_1,\dots, x_n\in\mathcal{A}$ such that $x_j\in\mathcal{A}_{i_j}$ where $i_1\neq i_2\neq \dots \neq i_n\in I$, we have
\begin{eqnarray*}
E[x_1\dots x_n] &=& E[x_1]\dots E[x_n]; \\
E'[x_1\dots x_n] &=& \sum_{j=1}^n E[x_1]\dots E[x_{j-1}]E'[x_j]E[x_{j+1}]\dots E[x_{n}]. 
\end{eqnarray*}
    \item 
Assume that $\Lambda$ is equipped with a linear order $<$. (The possibly non-unital) $\B$-bimodule subalgebras $(\A_i)_{i\in \Lambda}$ of $\A$ are called \emph{infinitesimally monotone independent over $\B$} if 
\begin{eqnarray*}
E[x_1\dots x_j \dots x_n] &=& E[x_1\dots x_{j-1}E[x_j]x_{j+1}\dots x_n];\\
E'[x_1\dots x_j \dots x_n] &=& E'[x_1\dots x_{j-1}E[x_j]x_{j+1}\dots x_n]+E[x_1\dots x_{j-1}E'[x_j]x_{j+1}\dots x_n].
\end{eqnarray*}
whenever $x_j\in\A_{i_j}, i_j\in \Lambda$ for all $j$ and $ i_{j-1} < i_j > i_{j+1}$, where one of the inequalities is eliminated if $j=1$ or $j=n$.
\end{itemize}
\end{definition}
 Note that the notion of infinitesimal monotone independence is similar to the case of monotone independence, which is lack of symmetry. In this context, $\A_{\alpha}$ and $\A_{\beta}$ are infinitesimally monotone independent if they are infinitesimally monotone independent with the order $\alpha<\beta$. We write in this case $\A_{\alpha}\pprec \A_{\beta}$. 

The elements $(x_i)_{i\in I}$ of $\A$ are infinitesimally free if the unital algebras generated by $x_i, (i\in I)$ are infinitesimally free over $\B$. In addition, $(x_i)_{i\in I}$ are infinitesimally Boolean (respectively monotone) independent if the non-unital $\B$-bimodule subalgebras generated by $x_i, (i\in I)$ over $\mathcal{B}$ form a infinitesimally Boolean (respectively monotone) independent family.
\begin{definition}
Let $X=(x_1,\dots, x_r)$ be a random vector and $i_1,\dots, i_{k} \in \{1,\dots, r\}$ for $n \geq 1$. Then the $(i_1,\dots, i_{k})$-infinitesimal moment of $X$ is the multilinear map $\partial \mu^X_{i_1,\dots,i_k}$ that is defined by
$$
\partial \mu^X_{i_1,\dots,i_k}(b_1,\dots,b_{k-1})=E'(x_{i_1}b_1\dots b_{k-1}x_{i_{k}}). 
$$
The \emph{infinitesimal distribution of X} is the collection of all possible $(i_1,\dots, i_{n})$-moments and infinitesimal moments of $X$. 
\end{definition}   
Moreover in the OV-infinitesimal setting, convergence in infinitesimal distribution means convergence in norm of all joint $\B$-valued moments and  $\B$-valued infinitesimal moments. More precisely, for each $n\in \mathbb{N}$, $X_n=(x_n^{(i)})_{i \in I} \in \A_n$ and $X=(x_i)_{i \in I} \in \A$, we say $(X_n)_n$ converges to $X$ in infinitesimal distribution over $\B$ if for any $k\geq 0$, $i_1,\dots, i_{k} \in \{1,\dots, r\}$ and $b_0,b_1,\dots,b_k\in \B$,
\[
\begin{aligned}
&\|\mu^{X_n} _{i_1,\dots,i_{k}}(b_1, \dots, b_{k-1})-\mu^X _{i_1,\dots,i_{k}}(b_1, \dots, b_{k-1})\|\longrightarrow 0, \quad \text{ and } \\
&\|\partial\mu^{X_n} _{i_1,\dots,i_{k}}(b_1, \dots, b_{k-1})-\partial\mu^X _{i_1,\dots,i_{k}}(b_1, \dots, b_{k-1})\|\longrightarrow 0 \quad \text{ as } \ n\to\infty.
\end{aligned}\]

 Suppose that $(\A,E,E',\B)$ is an OV $C^*$-infinitesimal probability space, and let $x=x^*\in \A$. The \emph{OV  infinitesimal Cauchy transform of $x$} is defined by 
$$
\partial \G_x^{\B}(b):=E'[G_x(b)] \quad  \text{ for all }b\in \B \text{ such that }b-x \text{ is invertible. } 
$$
Note that the OV infinitesimal moments of $x$ are obtained by considering all matrix amplifications $\{\partial \G_{1_k \otimes x}^{M_k(\B)}\}_{k\in\mathbb{N}}$ of $\partial \G_x^{\B}$, defined for any $k \in \mathbb{N}$ by
$$\G_{1_k \otimes x}^{M_k(\B)}(b):= id_k\otimes E[G_{1_k\otimes x}(b)] \quad  \text{ for all }b\in M_m(\B) \text{ such that }b- 1_k\otimes x \text{ is invertible. } 
$$
For more details, we refer the reader to  \cite{tseng2021thesis}.

Note that for a given OV $C^*$-infinitesimal probability space $(\A,E,E',\B)$, the corresponding upper triangular probability space $(\widetilde{\A},\widetilde{E},\widetilde{\B})$ is an OV probability space where 
$$
\widetilde{\A}=\left\{\begin{bmatrix} x & x' \\ 0 & x
\end{bmatrix} \Big| x,x'\in \A\right\},\ \widetilde{B}=\left\{\begin{bmatrix} b & b' \\ 0 & b \end{bmatrix}  \Big| b,b'\in \B\right\}, 
$$
and $\widetilde{E}$ is a map from $\widetilde{A}$ to $\widetilde{B}$ that is defined by
$$
\widetilde{E}\begin{bmatrix}x & x' \\ 0 & x\end{bmatrix}= \begin{bmatrix} E[x] & E'[x]+E[x'] \\ 0 & E[x]
\end{bmatrix}.
$$
The following proposition provides the connection between $(\A,E,E',\B)$ and $(\widetilde{\A},\widetilde{E},\widetilde{\B})$ (see \cite{perales2021operator,tseng2019unified}). 

\begin{proposition}\label{Inf.Prop}
Let $(\A,E,E',\B)$ be an OV $C^*$-infinitesimal probability space and  $(\widetilde{\A},\widetilde{E},\widetilde{\B})$ be its upper triangular probability space. Then subalgebras $(\A_i)_{i\in I}$ that contain $\B$ are infinitesimally free/Boolean/monotone over $\B$ if and only if $(\widetilde{A}_i)_{i\in I}$ are free/Boolean/monotone over $\widetilde{B}$ respectively, 
where for each $i$,
$
\widetilde{A}_i$ 
and $
\widetilde{B}_i$ are as defined above.
\end{proposition}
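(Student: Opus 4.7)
The plan is to reduce the proposition to a direct $2\times 2$ block computation based on the multiplicative structure of upper triangular matrices. For $\widetilde{x}_j = \begin{bmatrix} x_j & x_j' \\ 0 & x_j \end{bmatrix} \in \widetilde{\A}_{i_j}$, a straightforward induction gives
\[
\widetilde{E}[\widetilde{x}_1\cdots \widetilde{x}_n] = \begin{bmatrix} E[x_1\cdots x_n] & E'[x_1\cdots x_n] + \sum_{j=1}^n E[x_1\cdots x_{j-1} x_j' x_{j+1}\cdots x_n] \\ 0 & E[x_1\cdots x_n] \end{bmatrix}.
\]
Each of the defining identities for free, Boolean, or monotone independence of $(\widetilde{\A}_i)_{i\in I}$ over $\widetilde{\B}$ then becomes an equality of two upper triangular $2\times 2$ matrices, whose $(1,1)$ and $(1,2)$ entries can be read off separately and matched against the two conditions in Definition \ref{defn:indep2}.

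For the monotone case (no centering required), the $(1,1)$ entry of $\widetilde{E}[\widetilde{x}_1\cdots\widetilde{x}_n] = \widetilde{E}[\widetilde{x}_1\cdots \widetilde{E}[\widetilde{x}_j]\cdots\widetilde{x}_n]$ recovers the monotone relation for $E$, and its $(1,2)$ entry, after using that relation to simplify each summand $E[x_1\cdots x_k'\cdots x_n]$ with $k\neq j$, collapses precisely to the infinitesimal-monotone identity for $E'$. The Boolean case is analogous: specializing $x_j'=0$ in one direction and reassembling in the other via the Boolean-$E$ factorization applied to the insertion terms $E[x_1\cdots x_j'\cdots x_n]$ extracts the two conditions of infinitesimal Boolean independence from the $(1,1)$ and $(1,2)$ entries of $\widetilde{E}[\widetilde{x}_1\cdots\widetilde{x}_n] = \widetilde{E}[\widetilde{x}_1]\cdots\widetilde{E}[\widetilde{x}_n]$.

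The main obstacle is the free case, since freeness of the $\widetilde{x}_j$'s requires the coupled centering $E[x_j]=0$ together with $E[x_j']=-E'[x_j]$. For the forward direction I would write $x_j' = z_j - E'[x_j]$ with $z_j\in\A_{i_j}$ and $E[z_j]=0$, substitute this into the $(1,2)$ sum, and cancel $E'[x_1\cdots x_n]$ against $\sum_j E[x_1\cdots E'[x_j]\cdots x_n]$ using the infinitesimal-free identity for $E'$; the remaining $\sum_j E[x_1\cdots z_j\cdots x_n]$ then vanishes by the classical vanishing-of-alternating-moments property for freeness over $\B$. For the reverse direction I would test against the specific centered choice $x_j'=-E'[x_j]$: the $(1,1)$ entry of the resulting vanishing identity yields the free-$E$ condition, while the $(1,2)$ entry yields precisely the infinitesimal-free relation for $E'$. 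Combining these three cases completes both directions of the equivalence.
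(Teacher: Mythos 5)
Your proposal is correct. The paper does not actually supply a proof of Proposition~\ref{Inf.Prop}; it cites \cite{perales2021operator,tseng2019unified}, so there is no internal argument to compare against, but your reduction is sound and self-contained. The key identity
\[
\widetilde{E}[\widetilde{x}_1\cdots\widetilde{x}_n]
 = \begin{bmatrix} E[x_1\cdots x_n] & E'[x_1\cdots x_n]+\sum_{j=1}^n E[x_1\cdots x_{j-1}x_j'x_{j+1}\cdots x_n] \\ 0 & E[x_1\cdots x_n] \end{bmatrix}
\]
is exactly the Leibniz rule for the upper-triangular representation, and your strategy of matching $(1,1)$ and $(1,2)$ entries term by term works in all three cases. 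A few small points worth flagging: in the monotone forward direction the $E$-monotone relation must also be applied to the pivot term $E[x_1\cdots x_s'\cdots x_n]$ (not only to the summands with $k\neq s$), since that is what cancels the $E[x_s']$ contribution coming from $\widetilde{E}[\widetilde{x}_s]$; and in the Boolean and monotone reverse directions the simplest and cleanest test is $x_j'=0$ for all $j$, which isolates both the $E$- and $E'$-identities at once without needing the reassembly step. For the free case your choices are the right ones: in the reverse direction $x_j'=-E'[x_j]$ lies in $\A_{i_j}$ because $\B\subseteq\A_{i_j}$, and in the forward direction the decomposition $x_j'=z_j-E'[x_j]$ with $E[z_j]=0$ lets the alternating-centered-moment vanishing kill $\sum_j E[x_1\cdots z_j\cdots x_n]$ while the infinitesimal-free identity cancels the rest. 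So your argument proves the proposition as stated.
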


Note that $(\widetilde{\A},\widetilde{E},\widetilde{B})$ is an OV Banach non-commutative probability space (that is, $\widetilde{\A}$ is a unital Banach algebra, $\widetilde{\B}$ be a subalgebra with $1\in \widetilde{\B}$ and $\widetilde{E}:\widetilde{\A}\to \widetilde{\B}$ is a linear, bounded, $\widetilde{B}$-bimodule projection) with the norm $\|\cdot\|$ on $\widetilde{\A}$ which is defined by 
$$
\|A\| =\|x\|_{\A}+\|x'\|_{\A} \quad \text{where} \quad A=\begin{bmatrix} x & x' \\ 0 & x \end{bmatrix}.
$$
Before recalling the OVI cumulants, we introduce some further notation.
\begin{notation}
Suppose that $\A$ is a unital algebra, and $\B$ is a unital sub-algebra of $\A$. Let $\{f_n^{\B}:\A^n\to \B\}_n$ and $\{\partial f_n^{\B}:\A^n\to \B\}_n$ be two families of multilinear maps. Given a partition $\pi\in NC(n)$, and a block $V\in \pi$, we define $\partial f_{\pi,V}^{\B}:\A^n\to \B$ to be the map that is equal to $f_{\pi}^{\B}$, as given in Notation \ref{notation-f_pi}, except that for the block $V$, we replace $f_{|V|}^{\B}$ by $\partial f_{|V|}^{\B}$. Then, the map $\partial f_{\pi}^{\B}:\A^n\to \B$ is defined by 
$$
\partial f_{\pi}^{\B}(x_1,x_2,\dots,x_n):= \sum\limits_{V\in \pi} \partial f_{\pi,V}^{\B}(x_1,\dots,x_n). 
$$
\end{notation}

\begin{definition}
Suppose that $(\A,E,E',\B)$ is an OV $C^*$-infinitesimal probability space. We define the
\begin{itemize}
    \item  \emph{OV infinitesimally free cumulants} $\{\partial r_n^{\B}:\A^n\to \B\}_n$ to be families of multilinear maps such that for all $n\in \mathbb{N}$ and $x_1,\dots,x_n\in \A$, 
    $$
     E'[x_1\dots x_n] = \sum\limits_{\pi\in NC(n)} \partial r_{\pi}^{\B} (x_1,\dots,x_n),
    $$
    where $\{r_n^{\B}:\A^n\to \B\}_n$ denotes the OV free cumulants.
    \item  \emph{OV infinitesimally Boolean cumulants} $\{\partial\beta_n^{\B}:\A^n\to \B\}_n$ to be families of multilinear maps such that for all $n\in \mathbb{N}$ and $x_1,\dots,x_n\in \A$, 
    $$
    E'[x_1\dots x_n] = \sum\limits_{\pi\in I(n)} \partial \beta_{\pi}^{\B} (x_1,\dots,x_n),
    $$
    where $\{\beta_n^{\B}:\A^n\to \B\}_n$ denotes the OV Boolean cumulants. 
    \item  \emph{OV infinitesimally monotone cumulants} $\{\partial h_n^{\B}:\A^n\to \B\}_n$ to be families of multilinear maps such that for all $n\in \mathbb{N}$ and $x_1,\dots,x_n\in \A$, 
    $$
    E'[x_1\dots x_n] = \sum\limits_{\pi\in NC(n)} \frac{1}{\tau(\pi)!}\partial h_{\pi}^{\B} (x_1,\dots,x_n),
    $$
    where $\{h_n^{\B}:\A^n\to\B\}_n$ denotes the OV monotone cumulants.
\end{itemize}
\end{definition}
Note that the notions of OV infinitesimally free and Boolean independence still share the property of vanishing  mixed cumulants, see \cite{tseng2019unified} and \cite{perales2021operator}.
\begin{proposition}\label{InfVanishing-prop}
Suppose that $(\A,E,E',\B)$ is an OV $C^*$-infinitesimal probability space. 
\begin{itemize}
    \item Unital subalgebras $\A_1,\dots,\A_n$ that contain $\B$ are infinitesimally free if and only if for $n\geq 2$ and $i_1,\dots,i_s\in [n]$ which are not all equal, and for $x_1\in \A_{i_1},\dots,x_s\in \A_{i_s}$, we have $r_s^{\B}(x_1,\dots,x_s)=\partial r_s^{\B}(x_1,\dots,x_s)=0.$ \\
    \item $\B$-bimodule subalgebras $\A_1,\dots,\A_n$ are infinitesimally Boolean independent if and only if for $n\geq 2$ and $i_1,\dots,i_s\in [n]$ which are not all equal, and for $x_1\in \A_{i_1},\dots,x_s\in \A_{i_s}$, we have $\beta_s^{\B}(x_1,\dots,x_s)=\partial\beta_s^{\B}(x_1,\dots,x_s)=0.$ 
\end{itemize}
\end{proposition}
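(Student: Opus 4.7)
The plan is to reduce this infinitesimal statement to its non-infinitesimal counterpart, Proposition \ref{Vanishing Prop}, by passing to the upper triangular probability space $(\widetilde{\A}, \widetilde{E}, \widetilde{\B})$ introduced above, and then to translate the resulting vanishing of mixed cumulants in $\widetilde{\B}$ back into a pair of vanishing statements in $\B$.

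First, I would invoke Proposition \ref{Inf.Prop}: infinitesimal free (respectively Boolean) independence of $(\A_i)_{i\in I}$ over $\B$ is equivalent to ordinary free (respectively Boolean) independence of the upper-triangular lifts $(\widetilde{\A}_i)_{i\in I}$ over $\widetilde{\B}$. Applying Proposition \ref{Vanishing Prop} inside $(\widetilde{\A}, \widetilde{E}, \widetilde{\B})$ then re-expresses this as the vanishing of the $\widetilde{\B}$-valued free cumulants $\widetilde{r}_s^{\widetilde{\B}}$ (respectively Boolean cumulants $\widetilde{\beta}_s^{\widetilde{\B}}$) on any tuple with arguments drawn from distinct $\widetilde{\A}_{i_j}$.

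The central step is to identify $\widetilde{r}_s^{\widetilde{\B}}$ and $\widetilde{\beta}_s^{\widetilde{\B}}$ in terms of data on $\A$. I expect, and would establish by induction on $s$ from the moment--cumulant relations, that on the diagonal lifts $\widetilde{x}_j = \mathrm{diag}(x_j, x_j)$ one has
\[
\widetilde{\kappa}_s^{\widetilde{\B}}(\widetilde{x}_1, \ldots, \widetilde{x}_s)
= \begin{bmatrix} \kappa_s^{\B}(x_1,\ldots,x_s) & \partial \kappa_s^{\B}(x_1,\ldots,x_s) \\ 0 & \kappa_s^{\B}(x_1,\ldots,x_s) \end{bmatrix},
\]
where $\kappa \in \{r, \beta\}$ and $\partial \kappa$ denotes the corresponding infinitesimal cumulant. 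The base case $s=1$ is immediate from the definition of $\widetilde{E}$. For the inductive step I would invert the moment--cumulant relation in $\widetilde{\B}$ and recognize the sum over partitions in which a derivative is applied to exactly one block as the defining relation of $\partial \kappa_s^{\B}$. Inserting $\widetilde{b}_j = \mathrm{diag}(b_j, b_j')$ between the $\widetilde{x}_j$ then recovers the mixed evaluations of $\kappa_s^{\B}$ and $\partial \kappa_s^{\B}$ on arbitrary $\B$-multilinear arguments via $\widetilde{\B}$-multilinearity and the $\B$-bimodule structure of each $\A_i$.

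Given this block identity, vanishing of $\widetilde{\kappa}_s^{\widetilde{\B}}$ on mixed tuples becomes, upon reading off the $(1,1)$ and $(1,2)$ entries, exactly the simultaneous vanishing of $\kappa_s^{\B}$ and $\partial \kappa_s^{\B}$, which is the desired characterization. The main obstacle I anticipate is the inductive verification of the block identity itself: the free case requires careful bookkeeping over non-crossing partitions using Notation \ref{notation-f_pi} together with its derived analogue $\partial f_{\pi,V}$, so as to match the Leibniz-type decomposition that arises from products of upper-triangular matrices against the definition of $\partial r_s^{\B}$. The Boolean case is considerably easier, since only interval partitions appear and the inversion of the moment relation is telescoping. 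The full proof of this identity is essentially carried out in \cite{tseng2019unified, perales2021operator}, which I would cite.
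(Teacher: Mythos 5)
The paper offers no proof of Proposition~\ref{InfVanishing-prop}; it simply refers to \cite{tseng2019unified} and \cite{perales2021operator}. Your strategy is exactly the one those references employ: lift to the upper triangular probability space $(\widetilde{\A},\widetilde{E},\widetilde{\B})$, invoke Proposition~\ref{Inf.Prop} to convert infinitesimal independence over $\B$ into ordinary independence over $\widetilde{\B}$, apply Proposition~\ref{Vanishing Prop} there, and then read the $(1,1)$ and $(1,2)$ entries of the $\widetilde{\B}$-valued cumulants to recover the pair $(\kappa_s^\B,\partial\kappa_s^\B)$. So, up to the caveats below, the proposal is sound and in line with the approach the paper points to.

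Two small points deserve care. First, you write $\widetilde{b}_j=\mathrm{diag}(b_j,b_j')$, but that element is not in $\widetilde{\B}$ unless $b_j=b_j'$; you mean the strictly upper triangular $\begin{bmatrix} b_j & b_j'\\ 0 & b_j\end{bmatrix}$. Second, the recursion in Notation~\ref{notation-f_pi} produces intermediate arguments like $\widetilde{x}_l\,\widetilde{r}_k^{\widetilde{\B}}(\cdots)$ which are genuinely upper triangular, not diagonal lifts, so the block identity must be established for arguments of the general form $\begin{bmatrix} a & a'\\ 0 & a\end{bmatrix}$ (with a corresponding Leibniz-type rule in the $(1,2)$ entry), or one has to decompose such an element as a diagonal lift plus $N\cdot(\text{diagonal lift})$ with the nilpotent $N=\begin{bmatrix}0&1\\0&0\end{bmatrix}\in\widetilde{\B}$ and exploit $\widetilde{\B}$-multilinearity and $N^2=0$. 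You rightly flag this as the delicate part; the details are in the references you cite.
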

Finally to recall and address the OV infinitesimal central limit theorems that were also proved in \cite{perales2021operator,tseng2021thesis}, we recall the definition of the associated limiting distributions. 

Let $(\A,E,E',\B)$ be an OV $C^*$-infinitesimal probability space, and $x$ be an element in $\A$ with $E[x]=E'[x]=0$.     

\begin{definition}
Suppose that $(\A,E,E',\B)$ is an OV $C^*$-infinitesimal probability space. Let $x\in \A$ with $E[x]=E'[x]=0$. We denote by $\eta_x':\B\to \B$ the $\B$-valued map given by $\eta_x'(b):=E'[xbx]$ and call the pair $(\eta_x,\eta_x')$, or simply $(\eta,\eta')$, the \emph{infinitesimal variance of x}. We say that $x$ is
\begin{itemize}
    \item  a \emph{$\B$-valued infinitesimal semi-circular element} with infinitesimal variance $(\eta,\eta')$ if $x$ is a semi-circular element with variance $\eta$ and 
    \begin{align*}
        E'[b_0xb_1\dots xb_k] = \begin{cases} \sum\limits_{\pi\in NC_2(k)}b_0\partial \eta_{\pi}(b_1,\dots,b_{k-1})b_k & \text{ if }k \text{ is even, }  \\ 0 & \text{ if }k \text{ is odd.}\end{cases} 
    \end{align*}
    \item  a \emph{$\B$-valued infinitesimal Bernoulli element} with infinitesimal variance $(\eta,\eta')$ if $x$ is a Bernoulli element with variance $\eta$ and 
     \begin{align*}
        E'[b_0xb_1\dots xb_k] = \begin{cases} \sum\limits_{j=0}^{k/2-1}b_0 \eta(b_1)\dots \eta'(b_{2j+1}) \dots\eta(b_{k-1})b_k & \text{ if }k \text{ is even, }  \\ 0 & \text{ if }k \text{ is odd.}\end{cases} 
     \end{align*}
    \item  a \emph{$\B$-valued infinitesimal arcsine element} with infinitesimal variance $(\eta,\eta')$ if $x$ is an arcsine element with variance $\eta$ and 
     \begin{align*}
        E'[b_0xb_1\dots xb_k] = \begin{cases} \sum\limits_{\pi\in NC_2(k)}\frac{1}{\tau(\pi)!}b_0\partial \eta_{\pi}(b_1,\dots,b_{k-1})b_k & \text{ if }k \text{ is even, }  \\ 0 & \text{ if }k \text{ is odd.}\end{cases} 
    \end{align*}
\end{itemize}
\end{definition}

\begin{theorem}\label{OVI-CLT} 
Suppose that $(\A,E,E',\B)$ is an OV $C^*$-infinitesimal probability space, and $\{x_i\}_{i\in \mathbb{N}}$ is a sequence of identically distributed infinitesimally freely (respectively Boolean, monotone) independent elements with infinitesimal variance $(\eta,\eta')$ and are such that $E[x_1]=E'[x_1]=0$. For each given $N \in \mathbb{N}$, we let 
$$
S_N=\frac{1}{\sqrt{N}}(a_1+\dots+a_N). 
$$
Then $(S_N)_N$ converges to $s$ in infinitesimal distribution over $\B$ 
where $s$ is a $\B$-valued infinitesimal semi-circular (respectively Bernoulli, arcsine) element with infinitesimal variance $(\eta,\eta')$.
\end{theorem}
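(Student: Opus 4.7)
The strategy is to reduce the infinitesimal estimate to the non-infinitesimal estimates in Theorems~\ref{theo:boolean-monotone} and \ref{theo:monotone-monotone} by passing to the upper triangular probability space $(\widetilde{\A},\widetilde{E},\widetilde{\B})$ associated to $(\A,E,E',\B)$ through Proposition~\ref{Inf.Prop}. I embed each variable diagonally as $\widetilde{x}_i = \bigl(\begin{smallmatrix} x_i & 0 \\ 0 & x_i \end{smallmatrix}\bigr)$ and similarly $\widetilde{y}_i$ in $\widetilde{\A}$. By Proposition~\ref{Inf.Prop}, the infinitesimal independence of $\{x_i\}$ and $\{y_i\}$ over $\B$ lifts to ordinary (free/Boolean/monotone) independence of $\{\widetilde{x}_i\}$ and $\{\widetilde{y}_i\}$ over $\widetilde{\B}$. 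A short computation from the explicit block form of $\widetilde{E}$ shows that Assumption~\ref{A:Infgeneral} for $\{x_i\},\{y_i\}$ is precisely what is needed to produce the non-infinitesimal Assumption~\ref{A:general} for the lifted families.

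Next, for $\Zb \in \bH^+(\B)$, take $\widetilde{\Zb} = \bigl(\begin{smallmatrix} \Zb & 0 \\ 0 & \Zb \end{smallmatrix}\bigr)$. Since $\widetilde{\ux}_N$ is block-diagonal, so is its resolvent, and the block-upper-triangular form of $\widetilde{E}$ yields
\[
\widetilde{E}\bigl[G_{\widetilde{\ux}_N}(\widetilde{\Zb})\bigr] = \begin{pmatrix} E[G_{\ux_N}(\Zb)] & E'[G_{\ux_N}(\Zb)] \\ 0 & E[G_{\ux_N}(\Zb)] \end{pmatrix},
\]
and analogously for $\uy_N$. Thus the $(1,2)$-block of $\widetilde{E}[G_{\widetilde{\ux}_N}(\widetilde{\Zb})] - \widetilde{E}[G_{\widetilde{\uy}_N}(\widetilde{\Zb})]$ is exactly $E'[G_{\ux_N}(\Zb)] - E'[G_{\uy_N}(\Zb)]$. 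Applying the Lindeberg argument underlying Theorems~\ref{theo:boolean-monotone} or \ref{theo:monotone-monotone} (according to the independence type) to the lifted families in $\widetilde{\A}$ and extracting this block gives the desired bound. The crude operator-norm estimates $\|E[x_i^k]\|, \|E'[x_i^k]\| \leq C\,\|x_i\|^k$ convert $\sqrt{\alpha_2(\widetilde{x}) \alpha_4(\widetilde{x})}$ into $\max_i \|x_i\|^3$, and the mixed cross-term $\sqrt{\alpha_2(\widetilde{x}) \alpha_4(\widetilde{y})}$ is handled by Young's inequality $ab^2 \leq \tfrac{1}{3}a^3 + \tfrac{2}{3}b^3$, yielding the constant $2$ in front of $N$.

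The main obstacle is that Theorems~\ref{theo:boolean-monotone} and \ref{theo:monotone-monotone} are formulated for OV $C^*$-probability spaces, whereas $(\widetilde{\A},\widetilde{E},\widetilde{\B})$ is only a Banach OV probability space. To get past this, I would revisit the Lindeberg swap used in those proofs and verify that each step --- the resolvent identity, Taylor expansion of $(\widetilde{\Zb}-\widetilde{\ux}_N)^{-1}$, the bound $\|(\widetilde{\Zb}-\widetilde{\ux}_N)^{-1}\| \leq \|\Imm(\Zb)^{-1}\|$ transferred block-wise from the $C^*$-estimate in $\A$, and cancellation at orders one and two from matched first and second moments --- depends only on the Banach-algebra structure of $\widetilde{\A}$ and the bimodule property of $\widetilde{E}$, both of which are available. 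An equivalent hands-on alternative, which avoids the $C^*$ versus Banach subtlety entirely, is to run the Lindeberg swap directly on $E'[G_{\ux_N}(\Zb)]$, using the relations in Definition~\ref{defn:indep2} to annihilate orders one and two of the resolvent Taylor expansion under $E$ and $E'$ separately; the factor $2$ in \eqref{eq:theo-Inf Eqn} then arises naturally from the two summations in the infinitesimal independence relations, one contributing the $E$-piece and the other the $E'$-piece.
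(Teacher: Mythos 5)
The proposal proves the wrong statement. What you sketch is the quantitative estimate \eqref{eq:theo-Inf Eqn} of Theorem~\ref{eq:theo-Inf}, a bound on $\|E'[G_{\ux_N}(\Zb)]-E'[G_{\uy_N}(\Zb)]\|$ at a fixed $\Zb\in\bH^+(\B)$. The statement you were asked about, Theorem~\ref{OVI-CLT}, asserts that $(S_N)$ converges to an infinitesimal semicircular/Bernoulli/arcsine element \emph{in infinitesimal distribution over $\B$}, which by the paper's own definition means norm convergence of \emph{every} joint $\B$-valued moment and infinitesimal moment of $S_N$. As the paper stresses, the OV Cauchy transform alone does not encode all $\B$-moments; one needs all the matrix amplifications $\{\partial\G_{1_k\otimes x}^{M_k(\B)}\}_{k\ge1}$. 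Your argument never touches these amplifications, nor explains how a single-point resolvent estimate yields convergence of all (infinitesimal) moments. Moreover, the paper does not prove Theorem~\ref{OVI-CLT} at all: it is recalled from \cite{perales2021operator,tseng2021thesis}, where it is established by direct moment--cumulant computation, a route your sketch does not address.

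For the result you actually tackled, Theorem~\ref{eq:theo-Inf}, your strategy does coincide with the paper's Proposition~\ref{Inf-Lemma}: embed diagonally into the upper-triangular space $(\widetilde\A,\widetilde E,\widetilde\B)$, invoke Proposition~\ref{Inf.Prop} to lift infinitesimal independence to ordinary independence of $\widetilde x,\widetilde y$ over $\widetilde\B$, check that Assumption~\ref{A:Infgeneral} lifts to Assumption~\ref{A:general}, and read off the $(1,2)$-block. One correction to your estimate step: in $\widetilde\A$ the map $\widetilde E$ is not completely positive, so the $\alpha_2,\alpha_4$ bounds of Theorems~\ref{theo:boolean-monotone} and~\ref{theo:monotone-monotone}, which rest on the Cauchy--Schwarz inequality \eqref{Cauchy-Schwarz} and the positivity bound \eqref{positivity}, do not transfer. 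The paper instead uses only the algebraic first- and second-order cancellations to obtain the telescoping identity \eqref{Inf-telescopingsum}, then bounds each third-order remainder crudely by $\|G_{\uz_i}(\Zb)(x_iG_{\uz^0_i}(\Zb))^3\|\le\|\Imm(\Zb)^{-1}\|^4\|x_i\|^3$ and applies $\|E'[a]\|\le 2\|a\|$ from Remark~\ref{Inf-Rem}. The factor $2$ in \eqref{eq:theo-Inf Eqn} is that complete-boundedness constant for $E'$, not a consequence of Young's inequality on a cross term; the Young step you propose would, if carried out, give a strictly larger constant.
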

\begin{remark}
By applying Proposition \ref{Inf.Prop} and \cite[Lemma 2.1]{perales2021operator}, we conclude that $x$ is a $\B$-valued infinitesimal semi-circular (respectively Bernoulli, arcsine) element with infinitesimal variance $(\eta,\eta')$ if and only if the OV free (respectively Boolean, monotone) cumulants $(\kappa_n^{\B})_n$ are as in \eqref{CLT:cumulants} and the OVI free (respectively Boolean, monotone) cumulants $(\partial\kappa_n^{\B})_n$ are as follows:
\begin{equation*}\label{ICLT:cumulants}
\partial\kappa_n^{\B}[xb_1,xb_2,\dots,xb_{n-1},x]= \begin{cases} \eta'(b_1) &\text{ if } n=2, \\ 0 &\text{ if }n\neq 2. \end{cases}
\end{equation*}
\end{remark}

Let $x_1$ and $x_2$ be elements in $\A$ such that $E[x_i]=E'[x_i]=0$ for $i=1,2$. Assume that $x_1$ and $x_2$ are $\B$-valued semicircular (respectively Bernoulli) elements with variances $\eta_{x_1}$ and $\eta_{x_2}$ respectively. Then $x_1+x_2$ is a semicircular (respectively Bernoulli) element with variance $\eta_{x_1+x_2}=\eta_{x_1}+\eta_{x_2}$. This easily follows from independence and the property of vanishing mixed moments, (see also \cite{Jekel-notes}). We prove now the infinitesimal analogue as follows. 
\begin{lemma}\label{Lemma.Is.elt}
For each $i=1,2$, assume $x_i$ is an infinitesimal semicircular (respectively Bernoulli) element with infinitesimal variance $(\eta_{x_i},\eta'_{x_i})$. If $x_1$ and $x_2$ are infinitesimally free (respectively Boolean), then $x_1+x_2$ is an infinitesimal semicircular (respectively Bernoulli) element with infinitesimal variance $(\eta_{x_1+x_2},\eta'_{x_1+x_2})=(\eta_{x_1}+\eta_{x_2},\eta'_{x_!}+\eta'_{x_2})$.
\end{lemma}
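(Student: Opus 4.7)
The plan is to proceed via the cumulant characterization recalled in the remark following Theorem \ref{OVI-CLT}, which reduces the problem to showing that the OV free/Boolean cumulants of $x_1+x_2$ and the OVI free/Boolean cumulants of $x_1+x_2$ have the prescribed values: zero for $n\neq 2$, and equal to $\eta_{x_1}+\eta_{x_2}$ respectively $\eta'_{x_1}+\eta'_{x_2}$ for $n=2$. Since cumulants are multilinear and since one may insert arbitrary $b_j\in\B$ between consecutive arguments, it suffices to compute $\kappa_n^{\B}[(x_1+x_2)b_1,\dots,(x_1+x_2)b_{n-1},x_1+x_2]$ and $\partial\kappa_n^{\B}[(x_1+x_2)b_1,\dots,(x_1+x_2)b_{n-1},x_1+x_2]$ for each $n\geq 1$.

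By multilinearity, each such cumulant expands into a sum of $2^n$ cumulants whose entries are either $x_1$ or $x_2$ (with the given $\B$-coefficients inserted). Here is where the infinitesimal freeness (resp. Boolean independence) of $x_1$ and $x_2$ enters: by Proposition \ref{InfVanishing-prop}, every mixed cumulant vanishes, that is, $\kappa_n^{\B}[y_{i_1}b_1,\dots,y_{i_{n-1}}b_{n-1},y_{i_n}]=0$ and $\partial\kappa_n^{\B}[y_{i_1}b_1,\dots,y_{i_{n-1}}b_{n-1},y_{i_n}]=0$ whenever the indices $i_1,\dots,i_n\in\{1,2\}$ are not all equal. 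Consequently only the two ``pure'' terms survive, giving
\begin{align*}
\kappa_n^{\B}[(x_1{+}x_2)b_1,\dots,x_1{+}x_2]
&=\kappa_n^{\B}[x_1b_1,\dots,x_1]+\kappa_n^{\B}[x_2b_1,\dots,x_2],\\
\partial\kappa_n^{\B}[(x_1{+}x_2)b_1,\dots,x_1{+}x_2]
&=\partial\kappa_n^{\B}[x_1b_1,\dots,x_1]+\partial\kappa_n^{\B}[x_2b_1,\dots,x_2].
\end{align*}

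To conclude, invoke the remark after Theorem \ref{OVI-CLT}: since each $x_i$ is an infinitesimal semicircular (resp. Bernoulli) element with infinitesimal variance $(\eta_{x_i},\eta'_{x_i})$, the cumulants on the right-hand sides vanish for $n\neq 2$ and equal $\eta_{x_i}(b_1)$ respectively $\eta'_{x_i}(b_1)$ for $n=2$. Summing over $i=1,2$ yields $\kappa_2^{\B}[(x_1{+}x_2)b_1,x_1{+}x_2]=(\eta_{x_1}+\eta_{x_2})(b_1)$ and $\partial\kappa_2^{\B}[(x_1{+}x_2)b_1,x_1{+}x_2]=(\eta'_{x_1}+\eta'_{x_2})(b_1)$, with all higher and lower cumulants vanishing. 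By the remark this is exactly the characterization of an infinitesimal semicircular (resp. Bernoulli) element with infinitesimal variance $(\eta_{x_1}+\eta_{x_2},\eta'_{x_1}+\eta'_{x_2})$.

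There is no genuine obstacle here; the only mild subtlety is making sure that Proposition \ref{InfVanishing-prop} is applied in the correct form for infinitesimal cumulants (both the ordinary and the infinitesimal mixed cumulants must vanish), but this is precisely its content. Note that the lemma is stated only for the free and Boolean cases, which is consistent with the fact that the vanishing-of-mixed-cumulants characterization is not available for monotone independence.
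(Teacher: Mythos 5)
Your proof is correct and follows essentially the same approach as the paper's: both deduce the result from Proposition~\ref{InfVanishing-prop} (vanishing of mixed ordinary and infinitesimal cumulants) together with the cumulant characterization of infinitesimal semicircular/Bernoulli elements from the remark after Theorem~\ref{OVI-CLT}. Your version just spells out the multilinear expansion into $2^n$ terms more explicitly than the paper does.
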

\begin{proof}
The statement follows from independence and the property of vanishing mixed infinitesimal moments in Proposition \ref{InfVanishing-prop}, which yield directly for any $n\neq 2$ and $b_1, \dots , b_{n-1} \in \B$ that
$$
\partial\kappa_n^{\B}((x_1+x_2)b,\dots,(x_1+x_2)b,(x_1+x_2)) = 0, 
$$
while for $n=2$ and any $b\in \B$, 
\begin{align*}
    \eta'_{x_1+x_2}(b)= \partial\kappa_2^{\B}\big((x_1+x_2)b,(x_1+x_2)\big) &= \partial\kappa_2^{\B}(x_1b,x_1)+\partial\kappa_2^{\B}(x_2b,x_2) \\                                                &= E'[x_1bx_1]+E'[x_2bx_2]=\eta'_{x_1}(b)+\eta'_{x_2}(b). 
\end{align*}
\end{proof}
\begin{remark}
Note that there are no analogue lemmas for the infinitesimal monotone case. To be precise, suppose that $x_1,\dots,x_n$ are infinitesimal monotone independent arcsine elements in $\A$ such that $E[x_i]=E'[x_i]=0$ for $i\in {n}$. It is not guaranteed that $x_1+\cdots+x_n$ is still an infinitesimal arcsine element. We call, in this case, $x_1+\dots+x_n$ an \emph{infinitesimal generalized arcsine element}. 
\end{remark}

\subsection{Positivity of conditional expectations}
 Since the conditional expectation $E$ is positive, it induces a $\B$-valued pre-inner product
$$\langle \cdot,\cdot\rangle:\ \A\times\A \rightarrow \B,\ (x,y) \mapsto E[x^\ast y]$$
with respect to which $\A$ becomes a right pre-Hilbert $\B$-module. In particular, we have the following analogue of the Cauchy-Schwarz inequality:
\begin{equation}\label{Cauchy-Schwarz}
\|E[x^\ast y]\|^2 \leq \|E[x^\ast x]\| \|E[y^\ast y]\|.
\end{equation}
The positivity of $E$ also implies the following important inequality
\begin{equation}\label{positivity}
\| E[x^\ast w x] \| \leq \|w\| \|E[x^\ast x]\|,
\end{equation}
which holds for all $x\in\A^n$ and $w\in M_n(\A)$ satisfying $w\geq 0$.

If $\mu$ and $\nu$ are two Borel probability measures on $\R$, then
\begin{itemize}
 \item the \emph{L\'evy distance} is defined by
$$L(\mu,\nu) := \inf\{\epsilon>0 \mid \forall t\in\R:\ \F_\mu(t-\epsilon) - \epsilon \leq \F_\nu(t) \leq \F_\mu(t+\epsilon) + \epsilon\};$$
 \item the \emph{Kolmogorov distance} is defined by
$$\Delta(\mu,\nu) := \sup_{t\in\R} |\F_\mu(t) - \F_\nu(t)|.$$
\end{itemize}
The L\'evy distance provides a metrization of convergence in distribution and may be bounded in terms of the associated Cauchy transforms of $\mu$ and $\nu$ as follows:
\begin{equation}\label{eq:Levy_bound}
L(\mu,\nu) \leq 2\sqrt{\frac{\epsilon}{\pi}} + \frac{1}{\pi} \int_\R | \Imm(\G_\mu(t+i\epsilon)) - \Imm(\G_\nu(t+i\epsilon)) |\, \mathrm{d} t.
\end{equation}
For a proof of \eqref{eq:Levy_bound}, we refer to Appendix B in \cite{BannaMaiBerry}.

\section{Proof of the main results}
In order to prove our main results, Theorems \ref{theo:boolean-monotone} and \ref{theo:monotone-monotone}, we rely on an operator-valued Lindeberg method, which allows replacing the $x_i$'s by the $y_i$'s, one at a time, and controlling the error of such a replacement. We follow the first lines of the proof of Theorem 3.1 in \cite{BannaMaiBerry} for which we recall the following basic algebraic identity:

\begin{lemma}\label{lem:Taylor_resolvents}
 Let $x$ and $y$ be invertible in some unital complex algebra $\A$, then for each $m\in\mathbb{N}$, the following identity holds:
\begin{align*}
x^{-1} - y^{-1} = \sum^m_{k=1} y^{-1} \big[ \big(y - x\big) y^{-1} \big]^k + x^{-1} \big[ \big(y - x\big) y^{-1} \big]^{m+1}.
\end{align*}
\end{lemma}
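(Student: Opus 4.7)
The plan is to prove this by induction on $m$, starting from the classical one-term resolvent identity
\[
x^{-1} - y^{-1} = x^{-1}(y-x)y^{-1},
\]
which is immediate from $x^{-1}(y-x)y^{-1} = x^{-1}yy^{-1} - x^{-1}xy^{-1} = x^{-1} - y^{-1}$. Writing $u := (y-x)y^{-1}$ for brevity, this base identity (corresponding to $m=0$ with an empty sum) is equivalent to $x^{-1} = y^{-1} + x^{-1}u$, an equality that expresses $x^{-1}$ as $y^{-1}$ plus a remainder in which $x^{-1}$ itself reappears multiplied on the right by $u$.

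For the inductive step, I would iterate this one-step expansion. Assuming the identity holds at step $m$, namely
\[
x^{-1} - y^{-1} = \sum_{k=1}^m y^{-1} u^k + x^{-1} u^{m+1},
\]
I substitute $x^{-1} = y^{-1} + x^{-1}u$ into the right-most factor of the remainder $x^{-1}u^{m+1}$. This produces an additional term $y^{-1}u^{m+1}$, which extends the sum to $k = m+1$, and leaves the new remainder $x^{-1}u^{m+2}$, yielding the identity at step $m+1$.

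The argument is purely algebraic and I do not expect any real obstacle. The only point to watch is that $\A$ is a general unital complex algebra, hence noncommutative, so each substitution must preserve the order of the factors and act only on the right-most occurrence of $x^{-1}$; beyond this bookkeeping, the identity is a standard telescoping/geometric expansion of the resolvent difference.
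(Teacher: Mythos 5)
Your proof is correct and is essentially the same as the paper's: both proceed by induction on $m$ from the one-step resolvent identity $x^{-1}-y^{-1}=x^{-1}(y-x)y^{-1}$, and your substitution $x^{-1}u^{m+1}=y^{-1}u^{m+1}+x^{-1}u^{m+2}$ (with $u=(y-x)y^{-1}$) is exactly the algebraic identity the paper cites for the inductive step. The only cosmetic nit is that you describe substituting into "the right-most factor" of $x^{-1}u^{m+1}$, whereas $x^{-1}$ is the left-most factor of that product; the computation itself is nonetheless carried out correctly.
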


\begin{proof}
The proof follows by induction on $m$ and the algebraic identity \[x^{-1} \big[ \big(y - x\big) y^{-1} \big]^{m+1}-y^{-1} \big[ \big(y - x\big) y^{-1} \big]^{m+1}=x^{-1} \big[ \big(y - x\big) y^{-1} \big]^{m+2}.\]
\end{proof}

\begin{proposition}\label{prop:Lindeberg}
Let $(\A, E, \B)$ be an operator-valued $C^\ast$-probability space. Let $N \in \bN$  and consider the families  $x=\{x_1, \dots, x_N\}$ and $y=\{y_1, \dots, y_N\}$ of self-adjoint elements in $\A$. For any $i=1,\dots,N$, set \[
 \uz_i =\sum_{j=1}^i x_j + \sum_{j=i+1}^N y_j
\qquad \text{and} \qquad 
  \uz^0_i =\sum_{j=1}^{i-1} x_j + \sum_{j=i+1}^N y_j.
 \]
Then for any $\Zb \in \bH^+(\B)$,
\begin{align}\label{telescoping-sum}
G_{\uz_N}  (\Zb)  - G_{\uz_0}(\Zb) 
 &= \sum^N_{i=1} \big(A_i(\Zb) + B_i(\Zb) +C_i(\Zb)\big),
\end{align}
where for each $i=1,\dots, N$,
\begin{align*}
A_i (\Zb)&= G_{\uz^0_i}(\Zb) \ x_i \ G_{\uz^0_i}(\Zb) - G_{\uz^0_i}(\Zb) \ y_i \ G_{\uz^0_i}(\Zb),\\
B_i (\Zb) & = G_{\uz^0_i}(\Zb) \big( x_i \  G_{\uz^0_i}(\Zb)\big)^2 - G_{\uz^0_i}(\Zb) \big( y_i \  G_{\uz^0_i}(\Zb)\big)^2, \\
C_i (\Zb) & =G_{\uz_i}(\Zb) \big( x_i \  G_{\uz^0_i}(\Zb)\big)^3 -G_{\uz_{i-1}}(\Zb)\big(  y_i \  G_{\uz^0_i}(\Zb)\big)^3.
\end{align*}

\end{proposition}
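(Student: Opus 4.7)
The plan is to combine a telescoping argument with a third-order resolvent expansion furnished by Lemma \ref{lem:Taylor_resolvents}. First I would write
\[
G_{\uz_N}(\Zb) - G_{\uz_0}(\Zb) = \sum_{i=1}^N \bigl(G_{\uz_i}(\Zb) - G_{\uz_{i-1}}(\Zb)\bigr)
\]
and, for each $i$, pass through the intermediate resolvent $G_{\uz_i^0}(\Zb)$ using the identities $\uz_i = \uz_i^0 + x_i$ and $\uz_{i-1} = \uz_i^0 + y_i$, which follow directly from the definitions of $\uz_i$ and $\uz_i^0$.

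Next, I would apply Lemma \ref{lem:Taylor_resolvents} with $m=2$ twice. Taking $x = \Zb - \uz_i$ and $y = \Zb - \uz_i^0$ gives $y - x = x_i$ and hence
\[
G_{\uz_i}(\Zb) - G_{\uz_i^0}(\Zb) = G_{\uz_i^0}(\Zb)\, x_i\, G_{\uz_i^0}(\Zb) + G_{\uz_i^0}(\Zb)\bigl(x_i\, G_{\uz_i^0}(\Zb)\bigr)^2 + G_{\uz_i}(\Zb)\bigl(x_i\, G_{\uz_i^0}(\Zb)\bigr)^3.
\]
A symmetric expansion holds for $G_{\uz_{i-1}}(\Zb) - G_{\uz_i^0}(\Zb)$ with $x_i$ replaced by $y_i$ and $G_{\uz_i}(\Zb)$ replaced by $G_{\uz_{i-1}}(\Zb)$. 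Subtracting these two identities cancels the $G_{\uz_i^0}(\Zb)$ pivot and groups the remaining terms by expansion order into precisely $A_i(\Zb) + B_i(\Zb) + C_i(\Zb)$. Substituting into the telescoping sum yields \eqref{telescoping-sum}.

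The manipulations are essentially algebraic and present no substantive obstacle; the only point requiring care is verifying the invertibility hypotheses of Lemma \ref{lem:Taylor_resolvents}, which is immediate since $\Zb \in \bH^+(\B)$ and each $\uz_i, \uz_i^0$ is self-adjoint (so that $\Zb - \uz_i$ and $\Zb - \uz_i^0$ are invertible in $\A$), together with careful bookkeeping to match the pivot $G_{\uz_i^0}(\Zb)$ in the first two orders with the full resolvents $G_{\uz_i}(\Zb)$ and $G_{\uz_{i-1}}(\Zb)$ in the cubic remainder terms so as to reproduce the exact forms of $B_i(\Zb)$ and $C_i(\Zb)$.
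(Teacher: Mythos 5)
Your proof is correct and follows the same route as the paper: telescope through the intermediate resolvent $G_{\uz_i^0}(\Zb)$, apply Lemma \ref{lem:Taylor_resolvents} with $m=2$ to both $G_{\uz_i}(\Zb)-G_{\uz_i^0}(\Zb)$ and $G_{\uz_{i-1}}(\Zb)-G_{\uz_i^0}(\Zb)$ (using $\uz_i-\uz_i^0=x_i$ and $\uz_{i-1}-\uz_i^0=y_i$), and subtract. Your identification of the substitutions $x=\Zb-\uz_i$, $y=\Zb-\uz_i^0$ and the check that invertibility holds for self-adjoint operators shifted into $\bH^+(\B)$ are exactly the points the paper implicitly relies on.
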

\begin{proof}
We start by writing, for any $\Zb \in \bH^+(\B)$, the difference as a telescoping sum:  
\begin{align*}
G_{\uz_N}  (\Zb)  - G_{\uz_0}(\Zb)
 &= \sum^N_{i=1} \big(G_{\uz_i}(\Zb) - G_{\uz_{i-1}}(\Zb)\big) 
= \sum^N_{i=1} \Big(\! \big( G_{\uz_i}(\Zb) -G_{\uz^0_i}(\Zb) \big)
 -  \big( G_{\uz_{i-1}}(\Zb)   -G_{\uz^0_i}(\Zb) \big) \!\Big).
\end{align*}
Noting that $\uz_i-  \uz^0_i = x_i$ and $\uz_{i-1}- \uz^0_i = y_i$, then by applying the algebraic identity in Lemma \ref{lem:Taylor_resolvents} up to order $3$, we get 
\begin{align*}
G_{\uz_i}(\Zb) - G_{\uz^0_i}(\Zb)  
&= G_{\uz^0_i}(\Zb) \ x_i \  G_{\uz^0_i}(\Zb)
 +G_{\uz^0_i}(\Zb) \big( x_i \  G_{\uz^0_i}(\Zb)\big)^2
+G_{\uz_i}(\Zb) \big( x_i \  G_{\uz^0_i}(\Zb)\big)^3 \qquad\text{and}\\
G_{\uz_{i-1}}(\Zb) - G_{\uz^0_i}(\Zb)  
&= G_{\uz^0_i}(\Zb) \ y_i \ G_{\uz^0_i}(\Zb)
+G_{\uz^0_i}(\Zb) \big( y_i \  G_{\uz^0_i}(\Zb)\big)^2
+G_{\uz_{i-1}}(\Zb)\big(  y_i \  G_{\uz^0_i}(\Zb)\big)^3. 
\end{align*}
Putting the above terms together and summing over $i=1,\dots, N$ ends the proof.
\end{proof}
 
\subsection{Boolean Case}\label{section:Boolean} We start with the proof of Theorem \ref{theo:boolean-monotone} relative to Boolean independence. While Boolean independence is simpler than the free case, we still need to treat it closely since Boolean independence is not well-behaved with respect to scalars.

\begin{proof}[Proof of Theorem \ref{theo:boolean-monotone}]
The starting point of the proof is the operator-valued Lindeberg method is Proposition \ref{prop:Lindeberg} where we note that $\ux_N=\uz_N$ and $\uy_N=\uz_0$.

\vspace{0.25cm}
\noindent
{\bf Proof of \eqref{eq1:theo-BM}.} We will prove that $E\big[ A_i (\Zb) \big]=0$, $E\big[ B_i (\Zb) \big]=0$ and that 
\begin{align*}
\|E[C_i(\Zb)]\| \leq \|\Imm(\Zb)^{-1}\|^4 \sqrt{\alpha_2(x)}\left(\sqrt{\alpha_4(x)}+\sqrt{\alpha_4(y)}\right)\ . 
\end{align*}
Considering the telescopic sum in \eqref{telescoping-sum}, we fix $i \in \{ 1,\dots,N\}$ and start by controlling the first order term 
\[
E\big[ A_i (\Zb) \big]= E\big[ G_{\uz^0_i}(\Zb) \ x_i \ G_{\uz^0_i}(\Zb)\big] - E\big[G_{\uz^0_i}(\Zb) \ y_i \ G_{\uz^0_i}(\Zb)\big].
\]
First, noting that for any $a=a^\ast \in \A$ and $b\in \bH^\pm(\B)$ such that $\|b^{-1}\|<1/\|a\|$, we can write $G_a(b)$ as a convergent power series as follows:
\begin{equation}\label{Resolvent}
G_a(b)= \sum_{n\geq 0} b^{-1} (ab^{-1})^n\,.
\end{equation}
Hence, for $\Zb \in \bH^+(\B)$ such that $\|\Zb^{-1}\|\leq 1/\| \uz_i^0\|$, we have 
\begin{align}\label{1storder-B}
E\big[ G_{\uz^0_i}(\Zb) \ x_i \ G_{\uz^0_i}(\Zb)\big]
&= \sum_{n,m\geq 0} E\big[\Zb^{-1}(\uz_0^i \Zb^{-1})^n \ x_i \ \Zb^{-1}(\uz_0^i \Zb^{-1})^m  \big]
\\& = \sum_{n,m\geq 0} E\big[\Zb^{-1}(\uz_0^i \Zb^{-1})^n \big] \ E[x_i ] \ E\big[\Zb^{-1}(\uz_0^i \Zb^{-1})^m  \big] , \nonumber
\\& =E\big[ G_{\uz^0_i}(\Zb)\big]\ E[x_i ]\ E \big[ G_{\uz^0_i}(\Zb)\big]=0, \nonumber
\end{align}
where the last equality follows from the fact that $E[x_i]=0$. In the same way, we prove that $E\big[G_{\uz^0_i}(\Zb) \ y_i \ G_{\uz^0_i}(\Zb)\big]=0$ and hence $E\big[ A_i (\Zb) \big]=0$ for $\Zb \in \bH^+(\B)$ such that $\|\Zb^{-1}\|\leq 1/\| \uz_i^0\|$. This identity extends by analyticity overall $\bH(\B)^+$. We turn now to the second order term in \eqref{telescoping-sum}:
\[
E\big[ B_i (\Zb) \big] = E\big[ G_{\uz^0_i}(\Zb) \big( x_i \  G_{\uz^0_i}(\Zb)\big)^2\big] - E\big[ G_{\uz^0_i}(\Zb) \big( y_i \  G_{\uz^0_i}(\Zb)\big)^2\big].
\]
We develop the first term on the right-hand side using \eqref{Resolvent} again. For $\Zb \in \bH^+(\B)$ such that $\|\Zb^{-1}\|\leq 1/\| \uz_i^0\|$, we write 
\begin{align}\label{2ndorder-B}
 E\big[  G_{\uz^0_i}(\Zb) &\ x_i \  G_{\uz^0_i}(\Zb) \ x_i \  G_{\uz^0_i}(\Zb)\big] \nonumber \\
& = \sum_{n,m,k\geq 0} E\big[\Zb^{-1}(\uz_0^i \Zb^{-1})^n \ x_i \ \Zb^{-1}(\uz_0^i \Zb^{-1})^m  \ x_i \ \Zb^{-1}(\uz_0^i \Zb^{-1})^k\big]\nonumber
 \\& = \sum_{n,k\geq 0} E\big[\Zb^{-1}(\uz_0^i \Zb^{-1})^n\big] \ E\big[ x_i \Zb^{-1} x_i\big] \ E\big[\Zb^{-1}(\uz_0^i \Zb^{-1})^k\big]\nonumber
 \\& \qquad + \sum_{n,k\geq 0} \sum_{m\geq 1} E\big[\Zb^{-1}(\uz_0^i \Zb^{-1})^n\big] \ E[ x_i] \ E\big[ \Zb^{-1}(\uz_0^i \Zb^{-1})^m\big]\ E[ x_i] \ E\big[\Zb^{-1}(\uz_0^i \Zb^{-1})^k\big] \nonumber
  \\& = \sum_{n,k\geq 0} E\big[\Zb^{-1}(\uz_0^i \Zb^{-1})^n\big] \ E\big[ x_i \Zb^{-1} x_i\big] \ E\big[\Zb^{-1}(\uz_0^i \Zb^{-1})^k\big]\nonumber\\
 &= E\big[ G_{\uz^0_i}(\Zb)\big] \ E\big[ x_i \Zb^{-1} x_i\big] \  E\big[ G_{\uz^0_i}(\Zb)\big].
\end{align}
The second equality follows from the fact that $x_i$ and $\uz_i^0$ are Boolean independent over $\B$ while the third one follows from the fact that $E[x_i]=0$. Similarly, we prove that 
\[E\big[ G_{\uz^0_i}(\Zb) \big( y_i \  G_{\uz^0_i}(\Zb)\big)^2\big]= E\big[ G_{\uz^0_i}(\Zb)\big] \ E\big[ y_i \Zb^{-1} y_i\big] \  E\big[ G_{\uz^0_i}(\Zb)\big] .\]
As the second order moments match by Assumption \ref{A:general}, we get that $E\big[ B_i (\Zb) \big]=0$ for $\Zb \in \bH^+(\B)$ such that $\|\Zb^{-1}\|\leq 1/\| \uz_i^0\|$ and hence by analyticity for  all $\Zb \in \bH^+(\B)$. We are left with the third order term $E\big[C_i (\Zb) \big]$, namely 
\begin{align*}
E\big[G_{\ux_N}  (\Zb)\big]  - E \big[G_{\uy_N}(\Zb)\big]
 = \sum^N_{i=1} \Big( E \big[G_{\uz_i}(\Zb) \big( x_i \  G_{\uz^0_i}(\Zb)\big)^3\big] -E \big[G_{\uz_{i-1}}(\Zb)\big(  y_i \  G_{\uz^0_i}(\Zb)\big)^3\big]\Big).
 \end{align*}
 Noting that $G_{\uz_i}(\Zb)  x_i G_{\uz^0_i}(\Zb) = G_{\uz^0_i}(\Zb)  x_i G_{\uz_i}(\Zb)$, then \eqref{Cauchy-Schwarz} and \eqref{positivity} yield 
\begin{align}\label{CC-3rdorder}
\big\|E & \big[  G_{\uz_i}(\Zb)  x_i  G_{\uz^0_i}(\Zb)  x_i  G_{\uz^0_i}(\Zb)  x_i  G_{\uz^0_i}(\Zb) \big]\big\|^2 \nonumber
\\& = \big\|E \big[  G_{\uz^0_i}(\Zb) x_i  G_{\uz_i}(\Zb) x_i G_{\uz^0_i}(\Zb) x_i \ G_{\uz^0_i}(\Zb) \big]\big\|^2 \nonumber
\\ & \leq  \big\| E\big[ G_{\uz^0_i}(\Zb) x_i G_{\uz_i}(\Zb)  G_{\uz_i}(\Zb)^\ast x_i \ G_{\uz^0_i}(\Zb)^\ast \big]\big\| \cdot
\big\| E\big[ G_{\uz^0_i}(\Zb)^\ast x_i \ G_{\uz^0_i}(\Zb)^\ast x_i^2 G_{\uz^0_i}(\Zb) x_i \ G_{\uz^0_i}(\Zb) \big]\big\| \nonumber
\\&  \leq  \|G_{\uz_i}(\Zb)\|^2  
\cdot \big\| E\big[ G_{\uz^0_i}(\Zb) x_i^2 \ G_{\uz^0_i}(\Zb)^\ast \big]\big\|
\cdot
\big\| E\big[ G_{\uz^0_i}(\Zb)^\ast x_i \ G_{\uz^0_i}(\Zb)^\ast x_i^2 G_{\uz^0_i}(\Zb) x_i \ G_{\uz^0_i}(\Zb) \big]\big\| .
\end{align}
 First, we note that $\|G_{\uz_i}(\Zb)\| \leq \| \Imm (\Zb)^{-1}\|$. We also note that by similar computations as in \eqref{1storder-B} and the positivity of $E$, we have 
\begin{align*}
  \big\| E\big[ G_{\uz^0_i}(\Zb) \ x_i^2 \ G_{\uz^0_i}(\Zb)^\ast \big]\big\| 
  & = \big\| E\big[ G_{\uz^0_i}(\Zb) \ E[x_i^2 ] \ G_{\uz^0_i}(\Zb)^\ast \big]\big\|
  \\& \leq \big\|E[ x_i^2 ]\big\|  \cdot \big\| E\big[ G_{\uz^0_i}(\Zb)\big]\big\|^2 
   \leq \big\|\Imm (\Zb)^{-1}\big\|^2 \alpha_2(x) ,
\end{align*}
where $\alpha_2(x):= \max_{1\leq i \leq N}\|E[ x_i^2 ]\|$. Again, by similar computations as in \eqref{2ndorder-B} and the positivity of $E$, we get 
\begin{align*}
    \big\| E\big[ G_{\uz^0_i}(\Zb)^\ast x_i \ G_{\uz^0_i}(\Zb)^\ast x_i^2 G_{\uz^0_i}(\Zb) x_i \ G_{\uz^0_i}(\Zb) \big]\big\| 
    &= \big\| E\big[ G_{\uz^0_i}(\Zb)^\ast \cdot E \big[x_i (\Zb^{-1})^\ast x_i^2 \Zb^{-1} x_i\big] \cdot G_{\uz^0_i}(\Zb) \big]\big\| 
    \\& \leq  \big\|E\big[x_i (\Zb^{-1})^\ast x_i^2 \Zb^{-1} x_i\big]\big\|  \cdot \big\| E\big[ G_{\uz^0_i}(\Zb)\big]\big\|^2 
  \\& \leq  \|\Zb^{-1}\|^2 \big\|\Imm (\Zb)^{-1}\big\|^2 \alpha_4(x),
\end{align*}
where $\alpha_4(x):=\max_{1\leq i\leq N}\sup_{} \|E[x_i  b^* x_i^2 b x_i ] \| $ with the supremum  taken over $b \in \B$ such that $ \|b\|=1 $. Putting the above bounds together, we get 
\begin{align*}
\big\|E & \big[  G_{\uz_i}(\Zb)  x_i  G_{\uz^0_i}(\Zb)  x_i  G_{\uz^0_i}(\Zb)  x_i  G_{\uz^0_i}(\Zb) \big]\big\|
\leq \|\Zb^{-1}\| \|\Imm(\Zb)^{-1}\|^3 \sqrt{\alpha_2(x)}\sqrt{\alpha_4(x) }\ .
\end{align*}
Similarly, we prove that 
\begin{align*}
\big\|E & \big[  G_{\uz_{i-1}}(\Zb)  y_i  G_{\uz^0_i}(\Zb)  y_i  G_{\uz^0_i}(\Zb)  y_i  G_{\uz^0_i}(\Zb) \big]\big\|
\leq \|\Zb^{-1}\| \|\Imm(\Zb)^{-1}\|^3 \sqrt{\alpha_2(y)}\sqrt{\alpha_4(y) }\ .
\end{align*}
As the second moments match, we have  $\alpha_2(x)=\alpha_2(y)$, and hence we infer that
\begin{align*}
\|E[C_i(\Zb)]\| &\leq \|\Zb^{-1}\| \|\Imm(\Zb)^{-1}\|^3 \sqrt{\alpha_2(x)}\Big(\sqrt{\alpha_4(x)}+\sqrt{\alpha_4(y) }\Big) \\ 
&\leq \|\Imm(\Zb)^{-1}\|^4 \sqrt{\alpha_2(x)}\Big(\sqrt{\alpha_4(x)}+\sqrt{\alpha_4(y) }\Big). 
\end{align*}
Finally, summing over $i=1,\ldots,N$ we get the bound in \eqref{eq1:theo-BM}. 
\begin{remark}
Note that the last inequality we use the fact that $\|\Zb^{-1}\|\leq \|\Imm(\Zb)^{-1}\|$ for all $\Zb\in \bH^+(\B)$. The proof of this inequality can be found in \cite{haagerup2005new}, or it also can be easily verified as follows: we write 
$$
\Zb=x+iy \text{ where }x=x^* \text{ and }y=y^*>0\in \B.
$$
Then we observe that 
$$
\Zb^{-1} = y^{-1/2}[w+i]^{-1}y^{-1/2} \text{ where }w=y^{-1/2}xy^{-1/2}, 
$$
which implies that
$
\|\Zb^{-1}\|\leq \|y^{-1/2}\|^2 \|(w+i)^{-1}\| =\|y^{-1}\| \|(w+i)^{-1}\|.
$
Finally, note that $w$ is self-adjoint, and hence 
$$
\|(w+i)^{-1}\|=\frac{1}{d(i,\sigma(w))}\leq 1,
$$
where $\sigma(w)$ is the spectrum of $w$ and $d(i,\sigma(w))=\inf\ \{\ |i-s| \mid s\in\sigma(w)\}$.
\end{remark}

\vspace{0.25cm}
\noindent
{\bf Proof of \eqref{eq2:theo-BM}.} As the conditional expectation $E$ preserves $\varphi$, $\varphi= \varphi \circ E$, the first steps of the proof follow in the same way as for \eqref{eq1:theo-BM}. For each $z \in \mathbb{C}^+$, we have that 
\begin{align*}
 \varphi [G_{\ux_N}(z)] - \varphi [G_{\uy_N}(z)] &= \sum_{i=1}^N \varphi   \big[C_i (z)\big]= \sum_{i=1}^N \varphi  \big[G_{\uz_i}(z) \big( x_i \  G_{\uz^0_i}(z)\big)^3 -G_{\uz_{i-1}}(z)\big(  y_i \  G_{\uz^0_i}(z)\big)^3\big]. 
 \end{align*}
We start by controlling the first term of $C_i (z)$. By similar computations as in \eqref{CC-3rdorder}, we get  
\begin{align*}
 \big|\varphi \big[  & G_{\uz_i}(z)  x_i  G_{\uz^0_i}(z)  x_i  G_{\uz^0_i}(z)  x_i  G_{\uz^0_i}(z) \big]\big|^2
\\ &  \leq  \|G_{\uz_i}(z)\|^2  
\cdot  \varphi\big[ G_{\uz^0_i}(z) x_i^2 \ G_{\uz^0_i}(z)^\ast \big]
\cdot
\varphi\big[ G_{\uz^0_i}(z)^\ast x_i \ G_{\uz^0_i}(z)^\ast x_i^2 G_{\uz^0_i}(z) x_i  G_{\uz^0_i}(z) \big] .
\end{align*}
 First, we note that $\|G_{\uz_i}(z)\| \leq \| \Imm (z)^{-1}\|$. As  $\varphi= \varphi \circ E$, then by similar computations as in \eqref{1storder-B} and the positivity of $E$, we have 
\begin{align*}
 \varphi \big[ G_{\uz^0_i}(z) \ x_i^2 \ G_{\uz^0_i}(z)^\ast \big]
  & = \varphi \big[ G_{\uz^0_i}(z) \ E[x_i^2 ] \ G_{\uz^0_i}(z)^\ast \big]
  \\& \leq   \big\|E[ x_i^2 ]\big\| \cdot \varphi \big[ G_{\uz^0_i}(z) G_{\uz^0_i}(z)^\ast \big]
 \leq  \alpha_2(x) \big\| G_{\uz^0_i}(z)\big\|_{L^2(\A,\varphi)}^2 . 
\end{align*}
Again, by similar computations as in \eqref{2ndorder-B} and the positivity of $E$, we get 
\begin{align*}
\varphi\big[ G_{\uz^0_i}(z)^\ast x_i \ G_{\uz^0_i}(z)^\ast x_i^2 G_{\uz^0_i}(z) x_i \ G_{\uz^0_i}(z) \big] 
&= \varphi \big[ G_{\uz^0_i}(z)^\ast \cdot E \big[x_i \bar{z}^{-1} x_i^2 z^{-1} x_i\big] \cdot G_{\uz^0_i}(z) \big]
\\& \leq  \frac{1}{|z|^2}\big\|E[x_i^4]\big\|  \cdot  \varphi \big[ G_{\uz^0_i}(z)^\ast G_{\uz^0_i}(z) \big] 
\\& \leq \frac{1}{\Imm(z)^2} \widetilde{\alpha}_4(x)  \big\| G_{\uz^0_i}(z)\big\|_{L^2(\A,\varphi)}^2 ,
\end{align*}
where $\widetilde{\alpha}_4(x):=\max_{1\leq i \leq N}\|E[x_i^4]\|$. Putting the above terms together we get for any $z \in \mathbb{C}^+$, 
\[
 \big|\varphi \big[ G_{\uz_i}(z)  x_i  G_{\uz^0_i}(z)  x_i  G_{\uz^0_i}(z)  x_i  G_{\uz^0_i}(z) \big]\big|
 \leq \frac{1}{\Imm(z)^2} \sqrt{ \alpha_2(x) \widetilde{\alpha}_4(x)} \big\| G_{\uz^0_i}(z)\big\|_{L^2(\A,\varphi)}^2.
\]
Similarly, we control the second term in $\varphi[C_i(z)]$ and get
\[
 \big|\varphi \big[ G_{\uz_{i-1}}(z)  y_i  G_{\uz^0_i}(z)  y_i  G_{\uz^0_i}(z)  y_i  G_{\uz^0_i}(z) \big]\big|
 \leq \frac{1}{\Imm(z)^2} \sqrt{\alpha_2(y) \widetilde{\alpha}_4(y)} \big\| G_{\uz^0_i}(z)\big\|_{L^2(\A,\varphi)}^2 .
\]
As the second order moments match, $\alpha_2(x)=\alpha_2(y)$ and hence 
\[
\big| \varphi [G_{\ux_N}(z)] - \varphi [G_{\uy_N}(z)]\big|
\leq \frac{1}{\Imm(z)^2} \sqrt{\alpha_2(x)} \left(\sqrt{ \widetilde{\alpha}_4(x)} + \sqrt{\widetilde{\alpha}_4(y)}\right) \sum_{i=1}^N\big\| G_{\uz^0_i}(z)\big\|_{L^2(\A,\varphi)}^2 .
\]
Now taking $z=t+i\epsilon$ and using the fact that for each $x=x^\ast\in\A$,
\begin{equation}\label{eq:Cauchy-integral}
\int_\R \|G_x(t+i\epsilon)\|^2_{L^2}\, \mathrm{d} t = \frac{\pi}{\epsilon},
\end{equation}
we prove that 
\begin{align*}
\int_\R \big|\Imm \big(\varphi[G_{\ux_N}(t+i\epsilon)]\big)- \Imm \big(\varphi[G_{\uy_N}(t+i\epsilon)]\big)\big| \text{d}t \leq \frac{\pi}{\epsilon^3} \sqrt{\alpha_2(x)} \left(\sqrt{ \widetilde{\alpha}_4(x)} + \sqrt{\widetilde{\alpha}_4(y)}\right) N. 
\end{align*}
\end{proof}

\subsection{Monotone Case}\label{section:Monotone} We prove now Theorem \ref{theo:monotone-monotone} relative to monotone independence. The first lines of the proof are the same as in the Boolean case and rely on the operator-valued Lindeberg method in Proposition \ref{prop:Lindeberg}. However, as the order of the index set in the monotone case matters, this will require that we further expand the terms $\uz_i^0$ in the power series expansions of the relevant resolvents before factorizing with respect to monotone independence. For this aim, we would need the following lemma:

\begin{lemma}\label{Lemma:fact-monotone}
Let $x_1,x_2,y_1,y_2$ and $ w$ be self-adjoint elements in $\A$ that are such that $\{x_1,x_2\}\prec w \prec \{y_1,y_2\}$ over $\B$. Then, for any $n,m\geq 0$,
 \begin{align*}
 E\big[(x_1+y_1)^n  w(x_2+y_2)^m)\big]
= \sum^n_{k=0}& \sum^m_{\ell=0}
\sum_{\substack{q_0,...,q_k\geq 0\\q_0+\dots+q_k=n-k}} \sum_{\substack{p_0,...,p_\ell\geq 0\\p_0+\dots+p_\ell=m-\ell}}\\& \qquad\qquad
E\Big[ E[ y_1^{q_0}] x_1E[ y_1^{q_1}]\dots E[ y_1^{q_{k-1}}] x_1 E[ y_1^{q_k}]
 \\& \qquad\qquad\cdot E[w] \cdot E[ y_2^{p_0}] x_2 E[y_2^{p_1}] \dots E[ y_2^{p_{\ell-1}}] x_2 E[ y_2^{p_\ell}]\Big] .
\end{align*}
 \end{lemma}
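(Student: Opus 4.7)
The plan is to combine a non-commutative binomial expansion for $(x_1+y_1)^n$ and $(x_2+y_2)^m$ with a sequence of applications of the monotone independence rule (Definition \ref{defn:indep}) that strips off first the $y$-blocks, and only afterwards the central letter $w$.

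First, since $x_i$ and $y_i$ need not commute, I would expand $(x_1+y_1)^n$ by grouping the $2^n$ words in the letters $x_1,y_1$ according to the number $k$ of occurrences of $x_1$. For each $k\in\{0,\dots,n\}$, these occurrences partition the remaining $y_1$'s into $k+1$ consecutive blocks of sizes $q_0,\dots,q_k\geq 0$ with $q_0+\dots+q_k=n-k$, so that
\begin{equation*}
(x_1+y_1)^n = \sum_{k=0}^n\ \sum_{\substack{q_0,\dots,q_k\geq 0\\q_0+\dots+q_k=n-k}} y_1^{q_0} x_1 y_1^{q_1}\cdots x_1 y_1^{q_k},
\end{equation*}
and analogously for $(x_2+y_2)^m$. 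Substituting into $(x_1+y_1)^n\,w\,(x_2+y_2)^m$ and applying $E$ recovers the outer summation structure asserted by the lemma; what remains is to show that each term
\begin{equation*}
E\!\left[y_1^{q_0} x_1 \cdots x_1 y_1^{q_k}\, w\, y_2^{p_0} x_2 \cdots x_2 y_2^{p_\ell}\right]
\end{equation*}
simplifies to the corresponding summand on the right-hand side.

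To effect this simplification I would view the argument of the outer $E$ as a word whose letters are the blocks $y_1^{q_j}\in\A_{y_1}$ and $y_2^{p_j}\in\A_{y_2}$, together with the single letters $x_1\in\A_{x_1}$, $x_2\in\A_{x_2}$, and $w\in\A_w$. The hypothesis $\{x_1,x_2\}\prec w\prec\{y_1,y_2\}$ says precisely that every $y_1$- and $y_2$-block is a \emph{peak} in this word, because its immediate neighbors (if any) lie in the strictly lower-order algebras $\A_{x_1}$, $\A_{x_2}$, or $\A_w$. Applying Definition \ref{defn:indep} to each such peak in turn---using the boundary version of the rule at the ends of the word, and observing that trivial blocks with $q_j=0$ or $p_j=0$ contribute the harmless factor $E[1]=1$---replaces each $y_1^{q_j}$ and $y_2^{p_j}$ by the corresponding $\B$-scalar $E[y_1^{q_j}]$ or $E[y_2^{p_j}]$ and leaves the argument of $E$ in the form
\begin{equation*}
E[y_1^{q_0}] x_1 E[y_1^{q_1}]\cdots x_1 E[y_1^{q_k}]\,w\,E[y_2^{p_0}] x_2\cdots x_2 E[y_2^{p_\ell}].
\end{equation*}
By $\B$-bimodularity of $\A_{x_1}$ and $\A_{x_2}$ the chain to the left of $w$ now represents a single element of $\A_{x_1}$ and the chain to the right a single element of $\A_{x_2}$ (collapsing to pure $\B$-scalars in the degenerate cases $k=0$ or $\ell=0$), so in this reduced word $w$ has itself become a peak. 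One last application of the monotone rule converts $w$ into $E[w]$, producing exactly the desired summand.

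The main obstacle I anticipate is purely combinatorial bookkeeping: checking that the boundary cases ($k=0$, $\ell=0$, or some $q_j$ or $p_j$ equal to zero) fit uniformly into the single formula stated in the lemma, so that no summand is lost or double-counted. The substantive algebra reduces entirely to the peak/valley structure forced by $\{x_1,x_2\}\prec w\prec\{y_1,y_2\}$ and the iterated use of Definition \ref{defn:indep}.
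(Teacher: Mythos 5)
Your proposal is correct and follows essentially the same route as the paper: expand $(x_1+y_1)^n$ and $(x_2+y_2)^m$ by the noncommutative binomial formula, then use the monotone independence rule to first replace each $y_1$- and $y_2$-block (which is a peak since both sit strictly above $\A_{x_1},\A_{x_2},\A_w$) by its conditional expectation, and finally replace $w$ (now a peak between the collapsed $\A_{x_1}$- and $\A_{x_2}$-chains) by $E[w]$. The paper carries out exactly these two factorization steps; you merely spell out the peak-removal order and the $\B$-bimodularity bookkeeping in more detail.
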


\begin{proof}
For any $x,y \in \A$ and $r\geq 0$, we have by the noncommutative binomial expansion 
\begin{align*}
 (x+y)^r &= \sum^r_{k=0} \sum_{\substack{q_0,...,q_k\geq 0\\q_0+\dots+q_k=r-k}} 
y^{q_0} x  y^{q_1} \dots  y^{q_{k-1}}  x y^{q_k} \, .
\end{align*}
Hence, we write
\begin{align*}
 E\big[(x_1+y_1)^n  w(x_2+y_2)^m)\big]
= &\sum^n_{k=0} \sum^m_{\ell=0}
\sum_{\substack{q_0,...,q_k\geq 0\\q_0+\dots+q_k=n-k}} \sum_{\substack{p_0,...,p_\ell\geq 0\\p_0+\dots+p_\ell=m-\ell}}\\& \quad
E\Big[ y_1^{q_0} x_1  y_1^{q_1} \dots   y_1^{q_{k-1}}  x_1 y_1^{q_k} \cdot w \cdot
y_2^{p_0}  x_2 y_2^{p_1} \dots  y_2^{p_{\ell-1}} x_2  y_2^{p_\ell} \Big].
\end{align*}
Now as $\{x_1,x_2\}\prec w \prec \{y_1,y_2\}$ over $\B$, from the definition of monotone independence with amalgamation over $\B$, we get 
\begin{align*}
E\Big[& y_1^{q_0} x_1  y_1^{q_1} \dots   y_1^{q_{k-1}}  x_1 y_1^{q_k} \cdot w \cdot
y_2^{p_0}  x_2 y_2^{p_1} \dots  y_2^{p_{\ell-1}} x_2  y_2^{p_\ell} \Big]
\\&=
E\Big[ E[ y_1^{q_0}] x_1E[ y_1^{q_1}]\dots E[ y_1^{q_{k-1}}] x_1 E[ y_1^{q_k}]
\cdot  w \cdot E[ y_2^{p_0}] x_2 E[y_2^{p_1}] \dots E[ y_2^{p_{\ell-1}}] x_2 E[ y_2^{p_\ell}]\Big] 
\\&=
E\Big[ E[ y_1^{q_0}] x_1E[ y_1^{q_1}]\dots E[ y_1^{q_{k-1}}] x_1 E[ y_1^{q_k}]
\cdot E[w] \cdot E[ y_2^{p_0}] x_2 E[y_2^{p_1}] \dots E[ y_2^{p_{\ell-1}}] x_2 E[ y_2^{p_\ell}]\Big],
\end{align*}
which ends the proof. 
\end{proof}
Having the above factorization lemma in hand, we prove the following bounds that will be used at several occasions in the proof later on.
\begin{lemma}\label{Lemma:Monotone-Cauchy}
Let $x,y$ and $w$ be self-adjoint elements in $\A$ such that $x\prec w \prec y$ over $\B$, then for any $ b_1, b_2 \in \bH^\pm(\B)$,
\begin{enumerate}
\item  $E\big[G_{x+y}(b_1)\ w \ G_{x+y}(b_2)\big]
=E \big[G_{x}\big(E[G_{y}(b_1)]^{-1}\big)\cdot E[w] \cdot G_{x}\big(E[G_{y}(b_2)]^{-1}\big)\big].$
\end{enumerate}
Moreover, if $W\geq 0$, then for any $ b_1, b_2 \in \bH^\pm(\B)$,
\begin{enumerate}
\item[(ii)] $\big\| E[G_{x+y}(b_1)\ w \ G_{x+y}(b_2)] \big\| \leq \big\|E[w]\big\| \cdot \big\| \Imm(E[G_y(b_1)]^{-1})^{-1} \big\| \cdot \big\| \Imm (E[G_y(b_2)]^{-1})^{-1}\big\|$,
\end{enumerate}
and for any $z \in \mathbb{C}^+$,
\begin{enumerate}
    \item[(iii)] $ \varphi[G_{x+y}(z)\ w \ G_{x+y}(z)^\ast ] \leq \big\| E[w]\big\| \cdot \| G_{x}\big(E[G_{y}(z)]^{-1}\big)\|_{L_2(\A,\varphi)}^2.$
\end{enumerate}\end{lemma}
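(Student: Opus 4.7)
The plan is to establish part (i) first by an iterated resolvent expansion combined with the monotone factorization, and then to deduce parts (ii) and (iii) from (i) via operator-valued Cauchy--Schwarz. For (i), I would first restrict to $b_1,b_2\in\bH^\pm(\B)$ with $\|\Imm(b_i)^{-1}\|<1/\|x\|$, so that the iterated resolvent identity
\[
G_{x+y}(b) \;=\; \sum_{k\geq 0} G_y(b)\bigl(x\,G_y(b)\bigr)^k
\]
converges in norm. Plugging this expansion in for both $G_{x+y}(b_1)$ and $G_{x+y}(b_2)$ and applying $E$ turns the left-hand side of (i) into a double series whose generic term is $E\bigl[G_y(b_1)(xG_y(b_1))^k\, w\, G_y(b_2)(xG_y(b_2))^\ell\bigr]$. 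Since $x\prec w\prec y$ over $\B$, every occurrence of $G_y(b_i)$ in this product sits at a monotone peak (its neighbours lying in $\A_x$, $\A_w$, or at the boundary, all of strictly lower index), hence by the monotone factorization it may be replaced by $\beta_i:=E[G_y(b_i)]\in\B$. After this collapse, $w$ in turn becomes a peak among elements of $\A_x\cup\B$ and may be replaced by $E[w]\in\B$. Resumming the two geometric series in $x$ then yields
\[
E[G_{x+y}(b_1)\,w\,G_{x+y}(b_2)] \;=\; E\!\left[G_x(\beta_1^{-1})\,E[w]\,G_x(\beta_2^{-1})\right],
\]
which is exactly (i) since $\beta_i^{-1}=E[G_y(b_i)]^{-1}$; the restriction on $\|\Imm(b_i)^{-1}\|$ is removed by analyticity of both sides.

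For (ii), the hypothesis $w\geq 0$ forces $E[w]\geq 0$, so writing $E[w]=E[w]^{1/2}\cdot E[w]^{1/2}$ and applying the Cauchy--Schwarz inequality \eqref{Cauchy-Schwarz} to the identity from (i) gives
\[
\bigl\|E[G_x(\tilde b_1)\,E[w]\,G_x(\tilde b_2)]\bigr\|^2 \;\leq\; \bigl\|E[G_x(\tilde b_1)\,E[w]\,G_x(\tilde b_1)^\ast]\bigr\|\cdot\bigl\|E[G_x(\tilde b_2)^\ast\,E[w]\,G_x(\tilde b_2)]\bigr\|,
\]
with $\tilde b_i:=E[G_y(b_i)]^{-1}$. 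Each factor is bounded by $\|E[w]\|\cdot\|G_x(\tilde b_i)\|^2$ via the positivity estimate \eqref{positivity}, and $\|G_x(\tilde b_i)\|\leq\|\Imm(\tilde b_i)^{-1}\|$; taking square roots yields (ii). For (iii), I would apply $\varphi$ to the identity in (i) with $b_1=z$ and $b_2=z^\ast$, noting that $G_{x+y}(z^\ast)=G_{x+y}(z)^\ast$ and $G_x(\tilde z^\ast)=G_x(\tilde z)^\ast$ by self-adjointness of $x+y$ and $x$; the operator bound $E[w]\leq\|E[w]\|\cdot 1$ (valid since $w\geq 0$) together with $\varphi[G_x(\tilde z)G_x(\tilde z)^\ast]=\|G_x(\tilde z)\|_{L^2(\A,\varphi)}^2$ then delivers (iii).

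The main obstacle is making the monotone-peak factorization step in (i) airtight: strictly speaking, $G_y(b)$ is not itself an element of the $\B$-bimodule algebra $\A_y$ but only a norm-convergent limit of such elements. The cleanest remedy is to expand each $G_y(b_i)=\sum_{n\geq 0}b_i^{-1}(yb_i^{-1})^n$ further as a series in $y$ with $\B$-coefficients, at which point every resulting monomial falls directly under the scope of Lemma \ref{Lemma:fact-monotone} (together with a boundary analogue accommodating endpoint $y$-strings), and the resulting closed form can then be reassembled into $G_x$'s after two successive resummations.
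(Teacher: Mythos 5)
Your proposal is correct in substance, but it takes a different initial route for part (i) and a slightly more involved route for part (ii) than the paper does, ultimately converging on the same core computation.

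For (i), the paper does not use the iterated resolvent identity $G_{x+y}(b)=\sum_k G_y(b)(xG_y(b))^k$. It instead expands $G_{x+y}(b)$ directly as $\sum_{n\geq 0} b^{-1}((x+y)b^{-1})^n$, after which the left-hand side becomes a double series of terms of the form $b_1^{-1}E[(xb_1^{-1}+yb_1^{-1})^n w b_2^{-1}(xb_2^{-1}+yb_2^{-1})^m]$, to which Lemma~\ref{Lemma:fact-monotone} applies verbatim. You correctly identify the obstacle in your initial route --- $G_y(b)$ lies only in the norm closure of the $\B$-bimodule algebra $\A_y$, so the peak rule does not apply to it directly --- and your proposed remedy (further expanding $G_y$ into a power series in $y$) does work and, after regrouping terms by total degree, reproduces exactly the paper's expansion. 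However, the phrasing is imprecise: Lemma~\ref{Lemma:fact-monotone} as stated deals with $(x_1+y_1)^n w (x_2+y_2)^m$, not with general alternating monomials, so what you actually invoke is the monotone factorization step inside its proof (or equivalently the definition of monotone independence), and there is no need for any ``boundary analogue'' --- the endpoint $y$-strings are already covered by the indices $q_0,q_k,p_0,p_\ell$ in that lemma. It is cleaner to start, as the paper does, from the power-series expansion of $G_{x+y}$ rather than nest two expansions.

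For (ii), your Cauchy--Schwarz plus positivity argument is valid, but the paper's argument is simpler: it bounds $\|E[G_x(\tilde b_1)\,E[w]\,G_x(\tilde b_2)]\|$ directly by $\|G_x(\tilde b_1)\|\,\|E[w]\|\,\|G_x(\tilde b_2)\|$ using only that the conditional expectation is a contraction, and then applies $\|G_x(\tilde b_i)\|\le\|\Imm(\tilde b_i)^{-1}\|$. Your version needs the positivity of $E[w]$ to define $E[w]^{1/2}$; the paper's version does not use positivity of $w$ for this part at all. For (iii), your argument agrees with the paper's up to minor rewording.
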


\begin{proof}
{\it (i)} 
Writing the resolvent as a power series as in \eqref{Resolvent}, we have for $b_1$ and $b_2$ that are such that $\max\{ \|b_1^{-1}\|, \|b_2^{-1}\| \} < 1/ \|x+y\|$,
\[
E\big[ G_{x+y}(b_1) \ w \ G_{x+y}(b_2) \big]
=\sum_{n,m\geq 0} b_1^{-1} E\big[ (xb_1^{-1} +yb_1^{-1})^n w b_2^{-1} (xb_2^{-1} +yb_2^{-1})^m \big].
\]
Now, we get by Lemma \ref{Lemma:fact-monotone} after summing over $n,m \geq0$,
\begin{align*}
&E \big[ G_{x+y}(b_1) \ w \ G_{x+y}(b_2) \big] 
\\&=  \sum_{k,\ell \geq 0} \sum_{q_0,...,q_k\geq 0} \sum_{p_0,...,p_\ell\geq 0}
\E\Big[b_1^{-1} E\big[(yb_1^{-1})^{q_0}\big] xb_1^{-1}  E\big[(yb_1^{-1})^{q_1}\big]  \cdots       xb_1^{-1}  E\big[(yb_1^{-1})^{q_k}\big]
\\& \qquad \qquad \cdot E[w] \cdot
b_2^{-1} E\big[(yb_2^{-1})^{p_0}\big]  xb_2^{-1} E\big[(yb_2^{-1})^{p_1}\big] \cdots  xb_2^{-1}  E\big[(yb_2^{-1})^{p_\ell}\big] \Big]
\\& =\sum_{k,\ell \geq 0} E \Bigg[ E\Big[ \sum_{q_0\geq 0} b_1^{-1}(yb_1^{-1})^{q_0} \Big] \cdot x \cdot E\Big[ \sum_{q_1\geq 0} b_1^{-1}(yb_1^{-1})^{q_1} \Big] \cdots x \cdot  E\Big[ \sum_{q_k\geq 0} b_1^{-1}(yb_1^{-1})^{q_k} \Big]
\\ &\qquad  \qquad \cdot E[w] \cdot E\Big[ \sum_{p_0\geq 0} b_2^{-1}(yb_2^{-1})^{p_0}\Big]\cdot  x \cdot E\Big[ \sum_{p_1\geq 0} b_2^{-1}(yb_2^{-1})^{p_1}\Big] \cdots x \cdot E\Big[ \sum_{p_\ell\geq 0} b_2^{-1}(yb_2^{-1})^{p_\ell}\Big]\Bigg]
\\ &=\sum_{k,\ell \geq 0} E \Big[ E\big[G_y(b_1)\big] \big( x \cdot E\big[G_y(b_1)\big]\big)^k \cdot E[w] \cdot E\big[G_y(b_2)\big] \big( x \cdot E\big[G_y(b_2)\big]\big)^\ell \Big].
\end{align*}
For $i\in \{1,2\}$, we set $\widetilde{b}_i= E[G_y(b_i)]^{-1}$ and write
\begin{align*}
E \big[ G_{x+y}(b_1) \ w \ G_{x+y}(b_2) \big] 
&= E\Big[ \sum_{k \geq 0} \widetilde{b}_1^{-1} (x \widetilde{b}_1^{-1})^k \cdot E[w] \cdot \sum_{\ell \geq 0} \widetilde{b}_2^{-1} (x \widetilde{b}_2^{-1})^\ell \Big]
\\& =  E\Big[G_x \big(E[G_y(b_1)]^{-1}\big) \cdot E[w] \cdot G_x \big(E[G_y(b_2)]^{-1}\big) \Big],
\end{align*}
which proves the desired equality for $b_1,b_2 \in \bH^\pm(\B)$ such that $\max(\|b_1^{-1}\|,\|b_2^{-1}\|)< 1/||x+y||$.  The equality can be extended to $\bH^\pm(\B)$ since both of its sides are analytical.

\vspace{0.25cm}
\noindent
{\it (ii)} The second part of the lemma follows directly from the first: 
\begin{align*}
\big\| E[G_{x+y}(b)\ w \ G_{x+y}(b)^\ast ] \big\| 
&= \big\| E[G_{x+y}(b)\ w \ G_{x+y}(b^\ast) ] \big\| 
\\&=\big\| E \big[G_{x}\big(E[G_{y}(b)]^{-1}\big)\cdot E[w] \cdot G_{x}\big(E[G_{y}(b^\ast)]^{-1}\big)\big] \big\|
\\&\leq \big\| G_{x}\big(E[G_{y}(b)]^{-1}\big)\big\| \cdot \big\|G_{x}\big(E[G_{y}(b^\ast)]^{-1}\big)\big] \big\| \cdot \big\| E[w]\big\|
\\&\leq  \big\| \Imm\big(E[G_{y}(b)]^{-1}\big)^{-1}\big\| \cdot \big\|\Imm\big(E[G_{y}(b^\ast)]^{-1}\big)^{-1} \big\| \cdot \big\| E[w]\big\|
\\& \leq \big\| \Imm(b)^{-1} \big\|^2 \cdot \big\| E[w]\big\|\,,
\end{align*}
where the last inequality follows from the fact that $\Imm\big(E[G_{y}(b)]^{-1}\big) \geq \Imm (b)$ for any $b \in \bH^+(\B)$, see Remark 2.5 in \cite{BPV-12}.

\vspace{0.25cm}
\noindent
{\it (iii)} The last estimate also follows from $(i)$: for any $z \in \mathbb{C}^+$,
\begin{align*}
 \varphi[G_{x+y}(z)\ w \ G_{x+y}(z)^\ast ]
&= \varphi \big[G_{x}\big(E[G_{y}(z)]^{-1}\big)\cdot E[w] \cdot G_{x}\big(E[G_{y}(z)]^{-1}\big)^\ast\big] 
\\&\leq  \big\| E[w]\big\| \cdot  \varphi \big[G_{x}\big(E[G_{y}(z)]^{-1}\big) G_{x}\big(E[G_{y}(z)]^{-1}\big)^\ast\big] 
\\& =\big\| E[w]\big\| \cdot \| G_{x}\big(E[G_{y}(z)]^{-1}\big)\|_{L_2(\A,\varphi)}^2,
\end{align*}
where the inequality follows from the positivity of $\varphi$.
\end{proof}

Having the above lemmas in hand, we are now ready to prove Theorem \ref{theo:monotone-monotone} relative to the monotone case.
\begin{proof}[Proof of Theorem \ref{theo:monotone-monotone}]
Without loss of generality, we assume that $\A_{x_1,y_1} \prec \A_{x_2,y_2} \prec \dots \prec \A_{x_N,y_N}$ over $\B$. Indeed, one can simply take copies of the families $\{x_1, \dots x_N \}$ and $\{y_1, \dots y_N \}$ where this holds, for instance, we could choose $\A_{x_1} \prec \A_{x_2} \prec \cdots \prec  \A_{x_N}\prec\A_{y_1} \prec \A_{y_2} \cdots \prec \A_{y_N}$. The starting point is the operator-valued Lindeberg method in Proposition \ref{prop:Lindeberg} where we note that $\ux_N=\uz_N$ and $\uy_N=\uz_0$. 

\vspace{0.25cm}
\noindent
{\bf Proof of \eqref{eq1:theo-monotone}}. Again, we will prove that $E\big[ A_i (\Zb) \big]=0$, $E\big[ B_i (\Zb) \big]=0$ and that 
\begin{align*}
\|E[C_i(\Zb)]\| \leq \|\Imm(\Zb)^{-1}\|^4 \sqrt{\alpha_2(x)}\left(\sqrt{\alpha_4(x)}+\sqrt{\alpha_4(y) }\right)\ . 
\end{align*}

As above, all the identities here will be proven for $\Zb \in \bH^+(\B)$ such that $\|\Zb^{-1}\| $ is small enough and then get extended analytically to $\bH^+(\B)$. Considering  the telescoping sum in \eqref{telescoping-sum}, we fix $i \in \{ 1,\dots,N\}$ and start by controlling the first order term 
\[
E\big[ A_i (\Zb) \big]= E\big[ G_{\uz^0_i}(\Zb) \ x_i \ G_{\uz^0_i}(\Zb)\big] - E\big[G_{\uz^0_i}(\Zb) \ y_i \ G_{\uz^0_i}(\Zb)\big].
\]
We set $\uu_i= \sum_{j=1}^{i-1} x_j$ and $\uv_i = ~\sum_{j=i+1}^{N} y_j$, and note that  $\uu_i \prec x_i \prec \uv_i$ over $\B$. Then taking into account that $E[x_i]=0$, we get by Lemma \ref{Lemma:Monotone-Cauchy}, 
\begin{align*}
 E \big[ G_{\uz^0_i}(\Zb) \ x_i \ G_{\uz^0_i}(\Zb) \big] &=  E \big[ G_{\uu_i+\uv_i}(\Zb) \ x_i \ G_{\uu_i+\uv_i}(\Zb) \big]
\\&=  E \big[G_{\uu_i}\big(E[G_{\uv_i}(\Zb)]^{-1}\big)\cdot E[x_i] \cdot G_{\uu_i}\big(E[G_{\uv_i}(\Zb)]^{-1}\big)\big]=0.
\end{align*}
Similarly, we prove that $E \big[ G_{\uz^0_i}(\Zb) \ y_i \ G_{\uz^0_i}(\Zb) \big]=0$ and infer that $E[A_i(\Zb)]=0$. For  the second order term in \eqref{telescoping-sum}:
\[
E\big[ B_i (\Zb) \big] = E\big[ G_{\uz^0_i}(\Zb) \big( x_i \  G_{\uz^0_i}(\Zb)\big)^2\big] - E\big[ G_{\uz^0_i}(\Zb) \big( y_i \  G_{\uz^0_i}(\Zb)\big)^2\big],
\]
we develop the first term using \eqref{Resolvent}:
\begin{align*}
 E \Big[G_{\uz^0_i}(\Zb) \big( x_i   G_{\uz^0_i}(\Zb)\big)^2\Big]\!
&= E \Big[G_{\uu_i+\uv_i}(\Zb)  x_i   G_{\uu_i+\uv_i}(\Zb) x_i   G_{\uu_i+\uv_i}(\Zb)\Big]
\\&=\!\!\sum_{n,m,r\geq 0}\! \!E\Big[ \Zb^{-1} \big( (\uu_i+\uv_i)\Zb^{-1}\big)^n  x_i  \Zb^{-1} \big( (\uu_i+\uv_i)\Zb^{-1}\big)^m  x_i  \Zb^{-1} \big( (\uu_i+\uv_i)\Zb^{-1}\big)^r\Big]
\\&= \!\!\sum_{n,m,r\geq 0}\! \!\Zb^{-1} E\Big[ \big( \widetilde{\uu}_i+\widetilde{\uv}_i\big)^n  \widetilde{x}_i \big( \widetilde{\uu}_i+\widetilde{\uv}_i\big)^m  \widetilde{x}_i \big( \widetilde{\uu}_i+\widetilde{\uv}_i\big)^r\Big],
\end{align*}
where $\widetilde{\uu}_i= \uu_i \Zb^{-1}$, $\widetilde{\uv}_i= \uv_i \Zb^{-1}$ and $\widetilde{x}_i=x_i \Zb^{-1}$. Now using the noncommutative binomial expansion, we write 
\begin{multline*}
E\Big[ \big( \widetilde{\uu}_i  +\widetilde{\uv}_i\big)^n  \widetilde{x}_i \big( \widetilde{\uu}_i+\widetilde{\uv}_i\big)^m  \widetilde{x}_i \big( \widetilde{\uu}_i+\widetilde{\uv}_i\big)^r\Big]
 \\= \sum^n_{k=0} \sum^m_{\ell=0} \sum^r_{s=0}
\sum_{\substack{q_0,...,q_k\geq 0\\q_0+\dots+q_k=n-k}} \sum_{\substack{p_0,...,p_\ell\geq 0\\p_0+\dots+p_\ell=m-\ell}}
\sum_{\substack{t_0,...,t_s\geq 0\\t_0+\dots+t_s=r-s}}
\\ \qquad \qquad E\Big[ \widetilde{\uv}_i^{q_0}\widetilde{\uu}_i\widetilde{\uv}_i^{q_1} \dots \widetilde{\uu}_i\widetilde{\uv}_i^{q_k} \cdot \widetilde{x}_i \cdot \widetilde{\uv}_i^{p_0}\widetilde{\uu}_i\widetilde{\uv}_i^{p_1} \dots \widetilde{\uu}_i\widetilde{\uv}_i^{p_\ell} \cdot \widetilde{x}_i \cdot \widetilde{\uv}_i^{t_0}\widetilde{\uu}_i\widetilde{\uv}_i^{t_1} \dots \widetilde{\uu}_i\widetilde{\uv}_i^{t_s}  
\Big].
\end{multline*}
Noting that $\widetilde{\uu}_i \prec \widetilde{x}_i \prec \widetilde{\uv}_i$ over $\B$ and taking into account that $E[\widetilde{x}_i]=E[x_i] \Zb^{-1}=0$, we get whenever $m\neq 0$
\begin{align}\label{monotone-fact-2}
 E\Big[ \widetilde{\uv}_i^{q_0}\widetilde{\uu}_i \widetilde{\uv}_i^{q_1} & \dots \widetilde{\uu}_i\widetilde{\uv}_i^{q_k} \cdot \widetilde{x}_i \cdot \widetilde{\uv}_i^{p_0}\widetilde{\uu}_i\widetilde{\uv}_i^{p_1} \dots \widetilde{\uu}_i\widetilde{\uv}_i^{p_\ell} \cdot \widetilde{x}_i \cdot \widetilde{\uv}_i^{t_0}\widetilde{\uu}_i\widetilde{\uv}_i^{t_1} \dots \widetilde{\uu}_i\widetilde{\uv}_i^{t_s}  
\Big]\nonumber
\\&
= E\Big[ E[\widetilde{\uv}_i^{q_0}] \ \widetilde{\uu}_i \ E[\widetilde{\uv}_i^{q_1}] \dots \widetilde{\uu}_i \ E[\widetilde{\uv}_i^{q_k}] \cdot E[\widetilde{x}_i] \cdot E[\widetilde{\uv}_i^{p_0}] \ \widetilde{\uu}_i \ E[\widetilde{\uv}_i^{p_1}] \nonumber
\\& \qquad \qquad \qquad \dots \widetilde{\uu}_i \ E[\widetilde{\uv}_i^{p_\ell}] \cdot E[\widetilde{x}_i] \cdot E[\widetilde{\uv}_i^{t_0}]\ \widetilde{\uu}_i E[\widetilde{\uv}_i^{t_1}] \dots \widetilde{\uu}_i \ E[\widetilde{\uv}_i^{t_s}] 
\Big]
=0.
\end{align}
This yields that 
\begin{align*}
 E \Big[G_{\uz^0_i}(\Zb)&  x_i   G_{\uz^0_i}(\Zb) x_i   G_{\uz^0_i}(\Zb)\Big]
 \\&= \sum_{n,r\geq 0} \sum^n_{k=0}  \sum^r_{s=0}
\sum_{\substack{q_0,...,q_k\geq 0\\q_0+\dots+q_k=n-k}}
\sum_{\substack{t_0,...,t_s\geq 0\\t_0+\dots+t_s=r-s}}
\\& \qquad\Zb^{-1} E\Big[ E[\widetilde{\uv}_i^{q_0}] \widetilde{\uu}_i E[\widetilde{\uv}_i^{q_1}] \dots \widetilde{\uu}_i E[\widetilde{\uv}_i^{q_k}] \cdot E\big[x_i \Zb^{-1} x_i\big] \cdot \Zb^{-1}E[\widetilde{\uv}_i^{t_0}]\widetilde{\uu}_i E[\widetilde{\uv}_i^{t_1}] \dots \widetilde{\uu}_i E[\widetilde{\uv}_i^{t_s}] 
\Big].
\end{align*}
Similarly, we prove that 
\begin{align*}
 E \Big[G_{\uz^0_i}(\Zb)&  y_i   G_{\uz^0_i}(\Zb) y_i   G_{\uz^0_i}(\Zb)\Big]
 \\&= \sum_{n,r\geq 0} \sum^n_{k=0}  \sum^r_{s=0}
\sum_{\substack{q_0,...,q_k\geq 0\\q_0+\dots+q_k=n-k}}
\sum_{\substack{t_0,...,t_s\geq 0\\t_0+\dots+t_s=r-s}}
\\& \qquad\Zb^{-1} E\Big[ E[\widetilde{\uv}_i^{q_0}] \widetilde{\uu}_i E[\widetilde{\uv}_i^{q_1}] \dots \widetilde{\uu}_i E[\widetilde{\uv}_i^{q_k}] \cdot E\big[y_i \Zb^{-1} y_i\big] \cdot \Zb^{-1}E[\widetilde{\uv}_i^{t_0}]\widetilde{\uu}_i E[\widetilde{\uv}_i^{t_1}] \dots \widetilde{\uu}_i E[\widetilde{\uv}_i^{t_s}] 
\Big].
\end{align*}
Subtracting the two terms and recalling that $E[x_i b x_i]=E[y_i b y_i]$ for any $b \in \B$, we infer that $E[B_i(\Zb)]=0$. It remains to control the third order term $E[C_i(\Zb)]$ in \eqref{telescoping-sum}; namely, 
\begin{equation}\label{est:CE-telescopingsum}
E\big[G_{\ux_N}  (\Zb)\big]  - E \big[G_{\uy_N}(\Zb)\big]
 = \sum^N_{i=1} \Big( E \big[G_{\uz_i}(\Zb) \big( x_i \  G_{\uz^0_i}(\Zb)\big)^3\big] -E \big[G_{\uz_{i-1}}(\Zb)\big(  y_i \  G_{\uz^0_i}(\Zb)\big)^3\big]\Big).
\end{equation}
 Following the lines in \eqref{CC-3rdorder}, we get by \eqref{Cauchy-Schwarz} and \eqref{positivity}
\begin{align*}
\big\|E  \big[  G_{\uz_i}(\Zb)  & x_i  G_{\uz^0_i}(\Zb)   x_i  G_{\uz^0_i}(\Zb)  x_i  G_{\uz^0_i}(\Zb) \big]\big\|^2 \\
&\leq  \|G_{\uz_i}(\Zb)\|^2  
\cdot \big\| E\big[ G_{\uz^0_i}(\Zb) x_i^2 \ G_{\uz^0_i}(\Zb)^\ast \big]\big\|
\cdot
\big\| E\big[ G_{\uz^0_i}(\Zb)^\ast x_i \ G_{\uz^0_i}(\Zb)^\ast x_i^2 G_{\uz^0_i}(\Zb) x_i \ G_{\uz^0_i}(\Zb) \big]\big\| .
\end{align*}
First, we note that $\|G_{\uz_i}(\Zb)\| \leq \| \Imm (\Zb)^{-1}\|$. Then applying Lemma \ref{Lemma:Monotone-Cauchy} (ii) with $x=\uu_i, y=\uv_i$ and $W=x_i^2$, and for $b_1=b_2^\ast=b$,
we get
\begin{align*}
\big\| E\big[ G_{\uz^0_i}(\Zb) x_i^2 \ G_{\uz^0_i}(\Zb)^\ast \big]\big\| 
&= \big\| E\big[ G_{\uu_i +\uv_i}(\Zb) x_i^2 \ G_{\uu_i +\uv_i}(\Zb^\ast) \big]\big\| 
\\& \leq \|E[x_i^2]\| \cdot \| \Imm (\Zb)^{-1}\|^2 \leq   \| \Imm (\Zb)^{-1}\|^2 \alpha_2(x),
\end{align*}
where $\alpha_2(x)=\max\limits_{1\leq i\leq N}\|E[x_i^2]\|$.
As for the last inequality, we expand it using \eqref{Resolvent} and write
\begin{align*}
E\big[  G_{\uz^0_i}(\Zb)^\ast &x_i  G_{\uz^0_i}(\Zb)^\ast x_i^2 G_{\uz^0_i}(\Zb) x_i  G_{\uz^0_i}(\Zb) \big]
\\& =E\big[ G_{\uu_i +\uv_i}(\Zb^\ast) x_i  G_{\uu_i +\uv_i}(\Zb^\ast) x_i^2 G_{\uu_i +\uv_i}(\Zb) x_i \ G_{\uu_i +\uv_i}(\Zb) \big]
\\&  =\sum_{n,m\geq 0}\sum_{r,u\geq 0}E\Big[ (\Zb^\ast)^{-1} \big( (\uu_i+\uv_i)(\Zb^\ast)^{-1}\big)^n  x_i   (\Zb^\ast)^{-1} \big( (\uu_i+\uv_i)(\Zb^\ast)^{-1}\big)^m 
\\& \qquad  \qquad \qquad \qquad  \qquad \qquad x_i^2  \Zb^{-1} \big( (\uu_i+\uv_i)\Zb^{-1}\big)^r x_i  \Zb^{-1} \big( (\uu_i+\uv_i)\Zb^{-1}\big)^u\Big]
\\&=\sum_{n,m\geq 0}\sum_{r,u\geq 0}(\Zb^\ast)^{-1} E\Big[ \big( \widehat{\uu}_i+\widehat{\uv}_i\big)^n  \widehat{x}_i   \big( \widehat{\uu}_i+\widehat{\uv}_i\big)^m  x_i^2  \Zb^{-1}\big( \widetilde{\uu}_i+\widetilde{\uv}_i\big)^r \widetilde{x}_i \big( \widetilde{\uu}_i+\widetilde{\uv}_i\big)^u\Big],
\end{align*} 
where we have adopted the same notation as above in addition to $\widehat{\uu}_i= \uu_i (\Zb^\ast)^{-1}$, $\widehat{\uv}_i= \uv_i (\Zb^\ast)^{-1}$ and $\widehat{x}_i=x_i (\Zb^\ast)^{-1}$ for $i=1,2$. 
Similarly, we develop using the noncommutative binomial expansion and write
\begin{align*}
 &E\Big[ \big( \widehat{\uu}_i+\widehat{\uv}_i\big)^n  \widehat{x}_i   \big( \widehat{\uu}_i+\widehat{\uv}_i\big)^m  x_i^2 \Zb^{-1} \big( \widetilde{\uu}_i+\widetilde{\uv}_i\big)^r \widetilde{x}_i \big( \widetilde{\uu}_i+\widetilde{\uv}_i\big)^u\Big]  
 \\= &\sum^n_{k=0} \sum^m_{\ell=0} \sum^r_{s=0} \sum^u_{v=0}
\sum_{\substack{q_0,...,q_k\geq 0\\q_0+\dots+q_k=n-k}} \sum_{\substack{p_0,...,p_\ell\geq 0\\p_0+\dots+p_\ell=m-\ell}}
\sum_{\substack{t_0,...,t_s\geq 0\\t_0+\dots+t_s=r-s}}
\sum_{\substack{h_0,...,h_v\geq 0\\h_0+\dots+h_v=u-v}}
\\&   E\Big[ \widehat{\uv}_i^{ q_0}\widehat{\uu}_i\widehat{\uv}_i^{ q_1} \! \dots \widehat{\uu}_i\widehat{\uv}_i^{ q_k} \! \cdot \widehat{x}_i \cdot \widehat{\uv}_i^{ p_0}\widehat{\uu}_i \widehat{\uv}_i^{ p_1} \! \dots \widehat{\uu}_i \widehat{\uv}_i^{ p_\ell} \cdot x_i^2 \Zb^{-1} \!\cdot \widetilde{\uv}_i^{t_0}\widetilde{\uu}_i\widetilde{\uv}_i^{t_1} \! \dots \widetilde{\uu}_i\widetilde{\uv}_i^{t_s} \cdot \widetilde{x}_i \cdot \widetilde{\uv}_i^{h_0}\widetilde{\uu}_i\widetilde{\uv}_i^{h_1} \! \dots \widetilde{\uu}_i\widetilde{\uv}_i^{h_v}
\Big].
\end{align*}
Noting that $\uu_i \prec x_i \prec \uv_i$ over $\B$ and taking into account that $E[\widetilde{x}_i]=E[\widehat{x}_i] =0$, then factorizing as in \eqref{monotone-fact-2}, we prove that the above term is zero whenever $m\neq 0$ or $r\neq 0$. Hence, we get
\begin{align}\label{est:OC-4thmoment}
&E\big[  G_{\uz^0_i}(\Zb)^\ast  x_i  G_{\uz^0_i}(\Zb)^\ast x_i^2 G_{\uz^0_i}(\Zb) x_i  G_{\uz^0_i}(\Zb) \big]\nonumber
\\&  =\sum_{n\geq 0}\sum_{u\geq 0} \sum^n_{k=0}  \sum^u_{v=0}
\sum_{\substack{q_0,...,q_k\geq 0\\q_0+\dots+q_k=n-k}} 
\sum_{\substack{h_0,...,h_v\geq 0\\h_0+\dots+h_v=u-v}} \nonumber
\\& \quad (\Zb^\ast)^{-1} E\Big[ E[\widehat{\uv}_i^{ q_0}] \ \widehat{\uu}_i  E[\widehat{\uv}_i^{ q_1}] \dots \widehat{\uu}_i  E[\widehat{\uv}_i^{ q_k}]  \cdot E[x_i (\Zb^\ast)^{-1}  x_i^2 \Zb^{-1}   x_i ] \cdot \Zb^{-1} E[\widetilde{\uv}_i^{h_0}]  \widetilde{\uu}_i E[\widetilde{\uv}_i^{h_1} ] \dots \widetilde{\uu}_i  E[\widetilde{\uv}_i^{h_v}]
\Big] \nonumber
\\& =  \sum_{k\geq0}  \sum_{v\geq 0}
\sum_{q_0,...,q_k\geq 0} 
\sum_{h_0,...,h_v\geq 0} \nonumber
\\& \qquad\qquad (\Zb^\ast)^{-1} E\Big[ \widehat{\uv}_i^{ q_0} \ \widehat{\uu}_i \ \widehat{\uv}_i^{ q_1} \dots \widehat{\uu}_i \ \widehat{\uv}_i^{ q_k}  \cdot x_i (\Zb^\ast)^{-1}  x_i^2 \Zb^{-1}   x_i  \cdot \Zb^{-1} \widetilde{\uv}_i^{h_0} \ \widetilde{\uu}_i \widetilde{\uv}_i^{h_1}  \dots \widetilde{\uu}_i \ \widetilde{\uv}_i^{h_v}
\Big] \nonumber
\\& =E\big[ G_{\uu_i +\uv_i}(\Zb^\ast) \cdot x_i  (\Zb^\ast)^{-1} x_i^2 \Zb^{-1} x_i \cdot  G_{\uu_i +\uv_i}(\Zb) \big].
\end{align} 
Finally, applying Lemma \ref{Lemma:Monotone-Cauchy} with $x=\uu_i$, $y=\uv_i$ and $W=x_i  (\Zb^\ast)^{-1} x_i^2 \Zb^{-1} x_i$ for $b_1=\Zb^\ast=b_2$, yields
\begin{align*}
  \big\| E\big[  G_{\uz^0_i}(\Zb^\ast ) x_i  G_{\uz^0_i}(\Zb^\ast) x_i^2 G_{\uz^0_i}(\Zb) x_i  G_{\uz^0_i}(\Zb) \big] \big\|
  & = \big\|  E\big[ G_{\uu_i +\uv_i}(\Zb^\ast) \cdot x_i  (\Zb^\ast)^{-1} x_i^2 \Zb^{-1} x_i \cdot  G_{\uu_i +\uv_i}(\Zb) \big] \big\|
  \\& \leq \|\Imm(\Zb)^{-1}\|^2 \cdot \|E[x_i  (\Zb^\ast)^{-1} x_i^2 \Zb^{-1} x_i ] \| 
 \\& \leq \|\Zb^{-1}\|^2 \|\Imm(\Zb)^{-1}\|^2 \ \alpha_4(x) ,
\end{align*}
where $\alpha_4(x)=\max\limits_{1\leq i\leq N}\sup_{} \|E[x_i  b x_i^2 b x_i ] \| $ where the supremum is taken over $b \in \B$ such that $ \|b\|=1 $. 

Putting the above bounds together, we get 
\begin{align*}
\big\|E & \big[  G_{\uz_i}(\Zb)  x_i  G_{\uz^0_i}(\Zb)  x_i  G_{\uz^0_i}(\Zb)  x_i  G_{\uz^0_i}(\Zb) \big]\big\|
\leq \|\Zb^{-1}\| \|\Imm(\Zb)^{-1}\|^3 \sqrt{\alpha_2(x)}\sqrt{\alpha_4(x) }\ .
\end{align*}
Similarly, we prove that 
\begin{align*}
\big\|E & \big[  G_{\uz_{i-1}}(\Zb)  y_i  G_{\uz^0_i}(\Zb)  y_i  G_{\uz^0_i}(\Zb)  y_i  G_{\uz^0_i}(\Zb) \big]\big\|
\leq \|\Zb^{-1}\| \|\Imm(\Zb)^{-1}\|^3 \sqrt{\alpha_2(y)}\sqrt{\alpha_4(y) }\ .
\end{align*}
As the second moments match, we have  $\alpha_2(x)=\alpha_2(y)$, and hence we infer that
\begin{align*}
\|E[C_i(\Zb)]\| &\leq \|\Zb^{-1}\| \|\Imm(\Zb)^{-1}\|^3 \sqrt{\alpha_2(x)}\left(\sqrt{\alpha_4(x)}+\sqrt{\alpha_4(y) }\right) \\
&\leq \|\Imm(\Zb)^{-1}\|^4 \sqrt{\alpha_2(x)}\left(\sqrt{\alpha_4(x)}+\sqrt{\alpha_4(y) }\right). 
\end{align*}
\bigskip
Finally, summing over $i=1,\dots,N$ we get the bound in \eqref{eq1:theo-monotone}.

\vspace{0.25cm}
\noindent
{\bf Proof of \eqref{eq2:theo-monotone}}. 
Starting from \eqref{est:CE-telescopingsum}, we have for any $z \in \mathbb{C}^+$,
\[
\varphi\big[G_{\ux_N}  (z)\big]  - \varphi \big[G_{\uy_N}(z)\big] 
 = \sum^N_{i=1} \Big(\varphi \big[G_{\uz_i}(z) \big( x_i \  G_{\uz^0_i}(z)\big)^3\big] - \varphi \big[G_{\uz_{i-1}}(z)\big(  y_i \  G_{\uz^0_i}(z)\big)^3\big]\Big).
\]
As before, noting that $G_{\uz_i}(z)  x_i G_{\uz^0_i}(z) = G_{\uz^0_i}(z)  x_i G_{\uz_i}(z)$, then \eqref{Cauchy-Schwarz} and \eqref{positivity} and the positivity of $\varphi$ yield 
\begin{align*}
\big|\varphi  \big[ & G_{\uz_i}(z)  x_i  G_{\uz^0_i}(z)  x_i  G_{\uz^0_i}(z)  x_i  G_{\uz^0_i}(z) \big]\big|^2
\\& \leq  \varphi\big[ G_{\uz^0_i}(z) x_i G_{\uz_i}(z)  G_{\uz_i}(z)^\ast x_i \ G_{\uz^0_i}(z)^\ast \big] \cdot
\varphi\big[ G_{\uz^0_i}(z)^\ast x_i \ G_{\uz^0_i}(z)^\ast x_i^2 G_{\uz^0_i}(z) x_i \ G_{\uz^0_i}(z) \big] 
\\ & \leq  \|G_{\uz_i}(z)\|^2  
\cdot  \varphi \big[ G_{\uz^0_i}(z) x_i^2 \ G_{\uz^0_i}(z)^\ast \big]
\cdot
 \varphi \big[ G_{\uz^0_i}(z)^\ast x_i \ G_{\uz^0_i}(z)^\ast x_i^2 G_{\uz^0_i}(z) x_i \ G_{\uz^0_i}(z) \big] .
\end{align*}
Applying Lemma \ref{Lemma:Monotone-Cauchy} (iii) with $x=\uu_i, y=\uv_i$ and $W=x_i^2$, 
\begin{align*}
    \varphi \big[ G_{\uz^0_i}(z) x_i^2 \ G_{\uz^0_i}(z)^\ast \big]
   \leq  \| G_{\uu_i}\big(\varphi[G_{\uv_i}(z)]^{-1}\big)\|_{L_2(\A,\varphi)}^2 \cdot \big| \varphi[x_i^2]\big|.
\end{align*}
 By the same arguments as in \eqref{est:OC-4thmoment}, we get 
 \begin{align*}
\varphi \big[  G_{\uz^0_i}(z)^\ast x_i \ G_{\uz^0_i}(z)^\ast x_i^2 G_{\uz^0_i}(z) x_i \ G_{\uz^0_i}(z) \big]
&= \frac{1}{|z|^2}\varphi \big[ G_{\uu_i +\uv_i}(z)^\ast \cdot  \varphi[x_i^4] \cdot  G_{\uu_i +\uv_i}(z) \big]
\\& \leq \frac{1}{|z|^2} \| G_{\uu_i}\big(\varphi[G_{\uv_i}(z)]^{-1}\big)\|_{L_2(\A,\varphi)}^2 \cdot \big| \varphi[x_i^4]\big|,
 \end{align*}
 where the last inequality follows again by Lemma \ref{Lemma:Monotone-Cauchy} (iii) with $x=\uu_i, y=\uv_i$ and $W=x_i^4$. Putting the above bounds together, we get
 \begin{align*}
\big|\varphi  \big[  G_{\uz_i}(z)  x_i  G_{\uz^0_i}(z)  x_i  G_{\uz^0_i}(z)  x_i  G_{\uz^0_i}(z) \big]\big|
&\leq \frac{\|G_{\uz_i}(z)\|}{|z|}  \|G_{\uu_i}\big(\varphi[G_{\uv_i}(z)]^{-1}\big)\|_{L_2(\A,\varphi)}^2 \sqrt{|\varphi[x_i^4 ] | | \varphi[x_i^2]|}
\\ &\leq \frac{1}{ \Imm(z)^2}  \|G_{\uu_i}\big(\varphi[G_{\uv_i}(z)]^{-1}\big)\|_{L_2(\A,\varphi)}^2 \sqrt{\widetilde{\alpha}_4(x) \alpha_2(x)} ,
\end{align*}
where $\widetilde{\alpha}_4(x):=
\max_{1\leq i\leq N}\sup_{} |\varphi[x_i^4 ] | $ and $\alpha_2(x):=
\max_{1\leq i\leq N}\sup_{} |\varphi[x_i^2 ] | $. Similarly, we prove that 
 \begin{align*}
\big|\varphi  \big[  G_{\uz_{i-1}}(z)  y_i  G_{\uz^0_i}(z)  y_i  G_{\uz^0_i}(z) y_i  G_{\uz^0_i}(z) \big]\big|
&\leq \frac{1}{ \Imm(z)^2} \|G_{\uu_i}\big(\varphi[G_{\uv_i}(z)]^{-1}\big)\|_{L_2(\A,\varphi)}^2 \sqrt{\widetilde{\alpha}_4(y) \alpha_2(y)},
\end{align*}
As the second moments match, we have  $\alpha_2(x)=\alpha_2(y)$, and hence we infer that
\begin{multline*}
   \big|\varphi\big[G_{\ux_N}  (z)\big]  - \varphi \big[G_{\uy_N}(z)\big] \big|
 \\ \leq \frac{1}{ \Imm(z)^2} \sum_{i=1}^N \|G_{\uu_i}\big(\varphi[G_{\uv_i}(z)]^{-1}\big)\|_{L_2(\A,\varphi)}^2 \sqrt{\alpha_2(x)}\left(\sqrt{\widetilde{\alpha}_4(x)} +\sqrt{\widetilde{\alpha}_4(y)}\right). 
\end{multline*}
Note that $\uu_i\prec \uv_i$ implies that $F_{\uu_i}(F_{\uv_i}(z))=F_{\uu_i+\uv_i}(z)$  for all $z\in \mathbb{C}^+$ where $F_x$ is the reciprocal of the Cauchy transform $\G_x$ for any $x=x^*\in\A$. Thus, we have $\G_{\uu_i}(F_{\uv_i}(z))=\G_{\uu_i+\uv_i}(z).$ Therefore, for all $z\in \mathbb{C}^+,$
\begin{align}\label{Ineq: monotone-L^2}
    \|G_{\uu_i}\big(\varphi[G_{\uv_i}(z)]^{-1}\big)\|_{L_2(\A,\varphi)}^2&=-\frac{\Imm \G_{\uu_i}(F_{\uv_i}(z))}{\Imm F_{\uv_i}(z)}= -\frac{\Imm \G_{\uu_i+\uv_i}(z)}{\Imm F_{\uv_i}(z)} \\ \nonumber
    &\leq -\frac{\Imm\G_{\uu_i+\uv_i}(z)}{\Imm(z)}=\|G_{\uu_i+
    \uv_i}(z)\|_{L_2(\A,\varphi)}^2. 
\end{align}
Hence by \eqref{eq:Cauchy-integral} and the above inequality, we infer that
\begin{align*}
\int_\R \big|\Imm \big(\varphi[G_{\ux_N}(t+i\epsilon)]\big)- \Imm \big(\varphi[G_{\uy_N}(t+i\epsilon)]\big)\big| \text{d}t \leq \frac{\pi}{\epsilon^3} \sqrt{\alpha_2(x)}\left(\sqrt{ \widetilde{\alpha}_4(x)} + \sqrt{\widetilde{\alpha}_4(y)}\right) N. 
\end{align*}

\subsection{Infinitesimal case} The main idea of the proof is to bring the problem from the infinitesimal setting to the operator-valued framework where we can use already existing results. More precisely, Proposition \ref{Inf.Prop} allows us to obtain the desired estimates in the infinitesimal free, Boolean, and monotone settings by passing to the operator-valued setting and applying respectively the results in \cite{BannaMaiBerry}[Theorem 3.1], Section \ref{section:Boolean}, and Section \ref{section:Monotone}.

Let $(\A,E,E',\B)$ be an OV $C^*$-infinitesimal probability space and recall the notation Section \ref{section:prel-OVI}. The following Lemma demonstrates precisely how we can, with help of Proposition \ref{Inf.Prop}, pass to the operator-valued setting and still use the Lindeberg method to control the operator-valued infinitesimal Cauchy transform.
\begin{proposition}\label{Inf-Lemma}
Let $x=\{x_1,\dots,x_N\}$ and $y=\{y_1,\dots,y_N\}$ be two infinitesimally independent families of selfadjoint infinitesimally freely/Boolean/monotone independent elements satisfying Assumption \ref{A:Infgeneral}. Then for any given $\Zb\in \bH^+(\B)$, we have
\begin{equation}\label{Inf-telescopingsum}
E'\big[G_{\ux_N}  (\Zb)\big]  - E' \big[G_{\uy_N}(\Zb)\big]
 = \sum^N_{i=1} \Big( E' \big[G_{\uz_i}(\Zb) \big( x_i \  G_{\uz^0_i}(\Zb)\big)^3\big] -E' \big[G_{\uz_{i-1}}(\Zb)\big(  y_i \  G_{\uz^0_i}(\Zb)\big)^3\big]\Big).
\end{equation}
Here  $x$ and $y$ are infinitesimally monotone independent in the sense that
$\A_{x_1}\pprec \A_{x_2}\pprec \cdots \pprec \A_{x_N}\pprec \A_{y_1}\pprec \A_{y_2}\pprec \cdots \pprec \A_{y_N}.$
\end{proposition}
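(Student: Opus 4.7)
My plan is to reduce this identity to its operator-valued counterpart by passing to the upper triangular probability space $(\widetilde{\A}, \widetilde{E}, \widetilde{\B})$ of Proposition~\ref{Inf.Prop} and exploiting the block-diagonal nature of natural lifts. Since Lemma~\ref{lem:Taylor_resolvents} and consequently Proposition~\ref{prop:Lindeberg} are purely algebraic identities (requiring no independence, only invertibility of $\Zb - \uz_i$), the telescoping decomposition
\[G_{\ux_N}(\Zb) - G_{\uy_N}(\Zb) = \sum_{i=1}^N \bigl(A_i(\Zb) + B_i(\Zb) + C_i(\Zb)\bigr)\]
already holds in $\A$, and applying the $\B$-bimodule map $E'$ term-wise reduces the task to verifying that $E'[A_i(\Zb)] = 0$ and $E'[B_i(\Zb)] = 0$ for every $i$.

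To achieve this, I would consider the diagonal lifts $\widetilde{x}_i = \bigl[\begin{smallmatrix} x_i & 0 \\ 0 & x_i \end{smallmatrix}\bigr]$, $\widetilde{y}_i = \bigl[\begin{smallmatrix} y_i & 0 \\ 0 & y_i \end{smallmatrix}\bigr]$, and $\widetilde{\Zb} = \bigl[\begin{smallmatrix} \Zb & 0 \\ 0 & \Zb \end{smallmatrix}\bigr]$. A short block-matrix computation, using both the $E$- and $E'$-moment matchings in Assumption~\ref{A:Infgeneral} (the $E'$-part being activated precisely by the off-diagonal entries of a general $\widetilde{b} \in \widetilde{\B}$), shows that $(\widetilde{x}, \widetilde{y})$ satisfies Assumption~\ref{A:general} in the OV probability space $(\widetilde{\A}, \widetilde{E}, \widetilde{\B})$. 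By Proposition~\ref{Inf.Prop}, the families $\widetilde{x}$ and $\widetilde{y}$ are additionally free/Boolean/monotone independent over $\widetilde{\B}$; in the monotone case, the hypothesized infinitesimal chain $\A_{x_1} \pprec \cdots \pprec \A_{x_N} \pprec \A_{y_1} \pprec \cdots \pprec \A_{y_N}$ lifts to the ordinary monotone chain $\widetilde{\A}_{\widetilde{x}_1} \prec \cdots \prec \widetilde{\A}_{\widetilde{y}_N}$ over $\widetilde{\B}$.

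Because every lift is block-diagonal, so are the corresponding Lindeberg terms: $\widetilde{A}_i(\widetilde{\Zb})$ has both diagonal blocks equal to $A_i(\Zb)$, and similarly for $\widetilde{B}_i(\widetilde{\Zb})$; consequently
\[\widetilde{E}\bigl[\widetilde{A}_i(\widetilde{\Zb})\bigr] = \begin{bmatrix} E[A_i(\Zb)] & E'[A_i(\Zb)] \\ 0 & E[A_i(\Zb)] \end{bmatrix}\]
and likewise for $\widetilde{B}_i$. The vanishing of first- and second-order Lindeberg terms established inside the proofs of Theorem~\ref{theo:boolean-monotone} (Section~\ref{section:Boolean}), Theorem~\ref{theo:monotone-monotone} (Section~\ref{section:Monotone}), and~\cite[Theorem~3.1]{BannaMaiBerry} in the free case then yields $\widetilde{E}[\widetilde{A}_i(\widetilde{\Zb})] = \widetilde{E}[\widetilde{B}_i(\widetilde{\Zb})] = 0$, and reading off the upper-right entries produces $E'[A_i(\Zb)] = E'[B_i(\Zb)] = 0$ as required.

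The only technical point to navigate is that $(\widetilde{\A}, \widetilde{E}, \widetilde{\B})$ is a \emph{Banach} rather than a $C^*$-probability space, whereas the vanishing results cited above are phrased in the $C^*$ setting. However, inspecting those arguments one sees that the proofs of the first- and second-order vanishing depend only on the resolvent power series expansion~\eqref{Resolvent} (which converges in $\widetilde{\A}$ once $\|\widetilde{\Zb}^{-1}\|$ is small enough), analytic continuation in the variable $\widetilde{\Zb}$, and the moment-matching and independence data already verified above; none of these steps uses the $C^*$-norm estimates~\eqref{Cauchy-Schwarz}--\eqref{positivity}, which are only needed for the \emph{third}-order norm bounds and play no role in the identity~\eqref{Inf-telescopingsum} itself.
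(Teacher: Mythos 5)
Your proof is correct and follows essentially the same route as the paper: lift the families to block-diagonal elements of the upper triangular space $(\widetilde{\A},\widetilde{E},\widetilde{\B})$, invoke Proposition~\ref{Inf.Prop} and Assumption~\ref{A:Infgeneral} to transfer the independence and moment-matching hypotheses, apply the first- and second-order vanishing from the operator-valued Lindeberg arguments, and read off the $(1,2)$-entry. You do this slightly more economically by applying $E'$ to the scalar-level telescoping identity and isolating $E'[A_i]=E'[B_i]=0$ rather than extracting the entry from the full matrix-level identity, and you make explicit a technical point the paper leaves implicit, namely that those vanishing steps use only the resolvent power series, analyticity, independence, and moment matching — not the $C^*$-norm estimates — so they remain valid in the Banach probability space $(\widetilde{\A},\widetilde{E},\widetilde{\B})$.
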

\begin{proof}
Consider $x$ and $y$ to be two self-adjoint families in $\A$. Let $(\widetilde{\A},\widetilde{E},\widetilde{\B})$ be the corresponding upper triangular space of $(\A,E,E',\B)$ and set
$\widetilde{x}=\{\widetilde{x}_1,\dots,\widetilde{x}_N\}$ and $\widetilde{y}=\{\widetilde{y}_1,\dots,\widetilde{y}_N\}$ where
$
\widetilde{x_j}=\begin{bmatrix}x_j & 0 \\ 0 & x_j\end{bmatrix} \text{ and }\widetilde{y_j}=\begin{bmatrix}y_j & 0 \\ 0 & y_j \end{bmatrix} 
$
for each $j=1,\dots,N$. Finally, for each $i=1,\dots,N$,  let
$$
\widetilde{\uz}_i=\sum\limits_{j=1}^i \widetilde{x}_j+\sum\limits_{j=i+1}^N \widetilde{y}_j
\qquad \text{and} \qquad 
\widetilde{\uz}^0_i=\sum\limits_{j=1}^{i-1}\widetilde{x}_j+ \sum\limits_{j=i+1}^N \widetilde{y}_j;
$$  
and note that 
$$
\widetilde{\uz}_N=\begin{bmatrix}\ux_N & 0 \\ 0 & \ux_N \end{bmatrix}:=\widetilde{\ux}_N  
\qquad \text{and} \qquad 
\widetilde{\uz}_0=\begin{bmatrix}\uy_N & 0 \\ 0 & \uy_N \end{bmatrix}:=\widetilde{\uy}_N.
$$
Also note that if $x$ and $y$ are infinitesimally free, then by Proposition \ref{Inf.Prop}, it follows that $\A_{\widetilde{x}_1},\A_{\widetilde{x}_2},$ $\dots,\A_{\widetilde{x}_N},\A_{\widetilde{y}_1},\A_{\widetilde{y}_2},\dots, \A_{\widetilde{y}_N}$ are free over $\widetilde{\B}$. In addition, we observe that under Assumption \ref{A:Infgeneral}, for each $j=1,\dots,N$,
$$
\widetilde{E}[\widetilde{x}_j]=\widetilde{E}\begin{bmatrix}x_j & 0 \\ 0 & x_j\end{bmatrix} = \begin{bmatrix}E[x_j] & E'[x_j] \\ 0 & E[x_j]\end{bmatrix}
=\begin{bmatrix}0 & 0 \\ 0 & 0\end{bmatrix}.$$ Moreover, for any $B=\begin{bmatrix}b & b' \\ 0 & b\end{bmatrix}\in \widetilde{\B}$ and for each $j=1,\dots,N$, we have
$$
\widetilde{E}[\widetilde{x}_jB\widetilde{x}_j]=\begin{bmatrix}E[x_jbx_j] & E'[x_jbx_j]+E[x_jb'x_j] \\ 0 & E[x_jbx_j]\end{bmatrix}\! = \!\begin{bmatrix}E[y_jby_j] & E'[y_jby_j]+E[y_jb'y_j] \\ 0 & E[y_jby_j]\end{bmatrix} = \widetilde{E}[\widetilde{y}_jB\widetilde{y}_j]. 
$$
Therefore, we conclude that $\widetilde{x}$ and $\widetilde{y}$ satisfy the assumption in \cite[Theorem 3.1]{BannaMaiBerry}, which implies that for all $B=\begin{bmatrix}
\Zb & b' \\ 0 & \Zb
\end{bmatrix}\in \widetilde{\B}$ with $\Zb\in \bH^+(\B)$ and $b'\in \B$, 
\begin{equation}\label{wide-E-eqn}
\widetilde{E}\big[G_{\widetilde{\ux}_N}(B)\big]  - \widetilde{E} \big[G_{\widetilde{\uy}_N}(B)\big]
 = \sum^N_{i=1} \Big( \widetilde{E} \big[G_{\widetilde{\uz}_i}(B) \big( \widetilde{x_i} \  G_{\widetilde{\uz}^0_i}(B)\big)^3\big] -\widetilde{E} \big[G_{\widetilde{\uz}_{i-1}}(B)\big(  \widetilde{y_i} \  G_{\widetilde{\uz}^0_i}(B)\big)^3\big]\Big).
\end{equation}
Here we note that all the resolvents above are well-defined; indeed,
$$
G_{\widetilde{\ux}_N}(B) = \begin{bmatrix}
(\Zb-\ux_N)^{-1} & -(\Zb-\ux_N)^{-1}b'(\Zb-\ux_N)^{-1} \\
0 & (\Zb-\ux_N)\end{bmatrix}.
$$
In particular, for a given $\Zb\in \bH^+(\B)$, if we let $B=\begin{bmatrix}\Zb & 0 \\ 0 & \Zb\end{bmatrix}$, then
the left hand side of \eqref{wide-E-eqn} is
$$
\begin{bmatrix}
E[G_{\ux_N}  (\Zb)]  - E [G_{\uy_N}(\Zb)] &
E'[G_{\ux_N}  (\Zb)]  - E' [G_{\uy_N}(\Zb)]  \\
0 & E[G_{\ux_N}  (\Zb)]  - E [G_{\uy_N}(\Zb)]
\end{bmatrix}.
$$
On the other hand, we observe that
$$
\widetilde{E} \big[G_{\widetilde{\uz}_i}(B) ( \widetilde{x_i} \  G_{\widetilde{\uz}^0_i}(B))^3\big] 
= \begin{bmatrix}
E\big[G_{z_i}(\Zb)\big(x_iG_{z_i^0}(\Zb)\big)^3\big] & E'\big[G_{z_i}(\Zb)\big(x_iG_{z_i^0}(\Zb)\big)^3\big] \\
0 & E\big[G_{z_i}(\Zb)\big(x_iG_{z_i^0}(\Zb)\big)^3\big]
\end{bmatrix},
$$
and
$$\widetilde{E} \big[G_{\widetilde{\uz}_{i-1}}(B)\big(  \widetilde{y_i} \  G_{\widetilde{\uz}^0_i}(B)\big)^3\big] 
= 
\begin{bmatrix}
E\big[G_{{z}_{i-1}}(\Zb)\big( {y_i} \  G_{{z}^0_i}(\Zb)\big)^3\big] & E'\big[G_{{z}_{i-1}}(\Zb)\big( {y_i} \  G_{{z}^0_i}(\Zb)\big)^3\big] \\
0 & E\big[G_{{z}_{i-1}}(\Zb)\big( {y_i} \  G_{{z}^0_i}(\Zb)\big)^3\big] 
\end{bmatrix}
$$
for each $i=1,\dots,N$. Therefore, the $(1,2)$-entry of the right hand side of \eqref{wide-E-eqn} is nothing but
$$
\sum^N_{i=1} \Big( E' \big[G_{\uz_i}(\Zb) \big( x_i \  G_{\uz^0_i}(\Zb)\big)^3\big] -E' \big[G_{\uz_{i-1}}(\Zb)\big(  y_i \  G_{\uz^0_i}(\Zb)\big)^3\big]\Big).
$$
By comparing both sides of \eqref{wide-E-eqn}, we conclude that \eqref{Inf-telescopingsum} holds. 

Similarly, if $x$ and $y$ are infinitesimally Boolean (respectively monotone) independent that satisfy Assumption \ref{A:Infgeneral}, then $\widetilde{x}$ and $\widetilde{y}$ are Boolean (respectively monotone) independent that satisfy Assumption \ref{A:general} with respect to $\widetilde{E}$. Therefore, combining the estimates in Section 3.1 and Section 3.2, \eqref{Inf-telescopingsum} also holds whenever $x$ and $y$ are either infinitesimally Boolean or monotone independent.  
\end{proof}

\begin{remark}\label{Inf-Rem}
Note that the fact that $E'$ is completely bounded and self-adjoint implies that $E'=E_1-E_2$ for some completely positive maps $E_1$ and $E_2$. Therefore, for all $a\in \A$, we have
$$
\|E'[a]\|=\|E_1[a]-E_2[a]\|\leq \|E_1[a]\|+\|E_2[a]\|\leq 2\|a\|. 
$$
Suppose $(\widetilde{A},\widetilde{E},\widetilde{B})$ is an upper triangular probability space that is induced by $(\A,E,E',\B)$, then for a given $A=\begin{bmatrix}a & a' \\ 0 & a\end{bmatrix}\in\widetilde{A}$, 
$$\big \| \widetilde{E}\left[A\right]\big\| = \|E[a]\|+\|E'[a]+E[a']\| \leq 3 (\|a\|+\|a'\|) = 3\|A\|. $$
\end{remark}
{\bf Proof of \eqref{eq:theo-Inf Eqn}}
Following Proposition \ref{Inf-Lemma} and Remark \ref{Inf-Rem}, 
\begin{eqnarray*}
\|E'\big[G_{\ux_N}  (\Zb)\big]  - E' \big[G_{\uy_N}(\Zb)\big]\| 
 &\leq&  \sum^N_{i=1}  \Big( \|E' \big[G_{\uz_i}(\Zb) \big( x_i \  G_{\uz^0_i}(\Zb)\big)^3\big] \|+\|E' \big[G_{\uz_{i-1}}(\Zb)\big(  y_i \  G_{\uz^0_i}(\Zb)\big)^3\big]\|\Big) \\
 &\leq& 2\sum^N_{i=1} \left(\|G_{\uz_i}(\Zb) \big( x_i \  G_{\uz^0_i}(\Zb)\big)^3\|+\|G_{\uz_{i-1}}(\Zb) \big( y_i \  G_{\uz^0_i}(\Zb)\big)^3\|\right). 
\end{eqnarray*}
Note that for each $1\leq i\leq N$, 
$$
\|G_{\uz_i}(\Zb) \big( x_i \  G_{\uz^0_i}(\Zb)\big)^3\| \leq \|G_{\uz_i}(\Zb)\|\cdot\|x_i\|^3\cdot  \|G_{\uz^0_i}(\Zb)\|^3 \leq \|\Imm(\Zb)^{-1}\|^4 \|x_i\|^3. 
$$
Similarly, we prove that
$$
\|G_{\uz_{i-1}}(\Zb) \big( y_i \  G_{\uz^0_i}(\Zb)\big)^3\|\leq \|\Imm(\Zb)^{-1}\|^4 \|y_i\|^3.
$$
Thus, we obtain the desired result. 
\end{proof}

\section{Operator-valued Central Limit Theorems} \label{Section 4}
The aim of this section is to provide an application of our main results in Theorems \ref{theo:boolean-monotone} and \ref{theo:monotone-monotone} to the operator-valued central theorems for Boolean and monotone independence respectively. We will show how with our quantitative bounds in terms of the fourth and second operator-valued moments, we can furthermore obtain results on the fourth moment theorem for infinitely divisible measures in Section \ref{section:infinitelydivisiblemeasures}. Finally, we provide in Section \ref{section:Inf-CLT} the first quantitative estimates in the infinitesimal setting. 

Consider an  operator-valued $C^\ast$-probability space $(\A, E, \B)$. All along this section, we let $x:=\{x_1,\dots , x_n\}$ be a family of self-adjoint elements in $\A$ that are centered with respect to $E$ and set 
\[
X_n:=\frac{1}{\sqrt{n}}\sum_{j=1}^n x_j.
\]

\subsection{Boolean CLT}\label{section:BooleanCLT}

The Boolean Central Limit Theorem was first proved in the scalar-valued setting in the original paper by Speicher and Woroudi \cite{speicher1997boolean}. Quantitative versions were provided in \cite{ArSa-Boolean} and \cite{salazar2020berry} in terms of L\'evy distance. In the operator-valued setting, the Boolean CLT was then proved in \cite{BVP-OV-SAB}, where its quantitative extension may be found in \cite{Je-Li-19} or in the notes \cite{Jekel-notes}.

We improve on the above-mentioned quantitative results by providing quantitative estimates in terms of the moments instead of the operator norm that can be pushed to obtaining quantitative bounds on the L\'evy distance.  We start by letting $B_n$ be a centered $\B$-valued Bernoulli element whose variance is given by the completely positive map 
\[
\eta_n : \B \rightarrow \B, \qquad b \mapsto \eta_n(b) = \frac{1}{n} \sum_{j=1}^n E[x_j b x_j].
\]
We provide in the following theorem quantitative results on the level of the operator-valued Cauchy transforms as well as the L\'evy distance.
\begin{theorem}
Let $x:=\{x_1,\dots , x_n\}$ be a family of self-adjoint elements in $\A$ that are Boolean independent with amalgamation over $\B$ and that are such that $E[x_j]=0$. Then, for any $\Zb \in \bH^+(\B)$,
\begin{align*}
\big\| E[G_{X_n}(\Zb)] - E[G_{B_n}(\Zb)] \big\|
\leq \frac{1}{\sqrt{n}}  \|\Imm(\Zb)^{-1}\|^4 \sqrt{\alpha_2(x)}\left(\sqrt{\alpha_4(x)}+\sqrt{\alpha_2(x)^2 }\right) .
\end{align*}
Furthermore, there is a universal positive constant $c
$ such that
\[
L(\mu_{X_n}, \mu_{B_n}) \leq c \big( \alpha_2(x)(\widetilde{\alpha}_4(x)+\alpha_2(x)^2)\big)^{1/14} n^{-1/14}.
\]
\end{theorem}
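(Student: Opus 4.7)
The plan is to apply Theorem \ref{theo:boolean-monotone} with the rescaled summands $\tilde x_j := x_j/\sqrt n$ and an auxiliary family $\{y_1,\dots,y_n\}$ of Boolean independent centered $\B$-valued Bernoulli elements, where $y_j$ has variance $\eta_j(b) := \tfrac{1}{n} E[x_j b x_j]$. Two facts justify this choice. First, $E[\tilde x_j b \tilde x_j] = \tfrac{1}{n} E[x_j b x_j] = E[y_j b y_j]$ by construction, so Assumption \ref{A:general} is satisfied for the pair $\{\tilde x_j\},\{y_j\}$. Second, since the Boolean cumulants of a Bernoulli element vanish outside order two, and Boolean independence is characterized by vanishing of mixed cumulants (Proposition \ref{Vanishing Prop}), the sum $\sum_{j=1}^n y_j$ is itself Bernoulli with variance $\sum_j \eta_j = \eta_n$, and is therefore distributed as $B_n$.

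Next, I would compute the required moment quantities. The rescaling immediately gives $\alpha_2(\tilde x) = \alpha_2(x)/n$ and $\alpha_4(\tilde x) = \alpha_4(x)/n^2$. For the Bernoulli side, the defining moment formula applied with $b_0 = 1$, $b_1 = b^*$, $b_2 = 1$, $b_3 = b$, $b_4 = 1$ yields $E[y_j b^* y_j^2 b y_j] = \eta_j(b^*)\eta_j(b)$; since $\eta_j$ is completely positive, $\|\eta_j(b)\| \leq \|\eta_j(1)\| \|b\| = \|E[x_j^2]\| \|b\|/n$, so that $\alpha_4(y) \leq \alpha_2(x)^2/n^2$, and analogously $\widetilde\alpha_4(y) = \|\eta_j(1)^2\| \leq \alpha_2(x)^2/n^2$. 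Substituting these quantities into the bound \eqref{eq1:theo-BM}, the prefactor $N = n$ combines with the $1/n$ produced by the square roots of the rescaled moments to yield
\[
\|E[G_{X_n}(\Zb)] - E[G_{B_n}(\Zb)]\| \leq \frac{1}{\sqrt n} \|\Imm(\Zb)^{-1}\|^4 \sqrt{\alpha_2(x)}\bigl(\sqrt{\alpha_4(x)} + \sqrt{\alpha_2(x)^2}\bigr),
\]
which is the first claim.

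For the L\'evy estimate, the same substitution into \eqref{eq2:theo-BM} gives an integral bound of the form $\frac{C}{\epsilon^3 \sqrt n} M$ where $M := \sqrt{\alpha_2(x)(\widetilde\alpha_4(x) + \alpha_2(x)^2)}$, after using $\sqrt{a}+\sqrt{b}\leq \sqrt{2(a+b)}$ to combine the two terms. Plugging into \eqref{eq:Levy_bound} yields $L(\mu_{X_n},\mu_{B_n}) \leq 2\sqrt{\epsilon/\pi} + C'\epsilon^{-3} M/\sqrt n$; balancing the two terms by setting $\epsilon^{7/2} \sim M/\sqrt n$, i.e.\ $\epsilon \sim (M/\sqrt n)^{2/7}$, produces a final bound of order $(M/\sqrt n)^{1/7} = M^{1/7} n^{-1/14}$, which gives the stated exponent $1/14$ with $M^{1/7} = (\alpha_2(x)(\widetilde\alpha_4(x)+\alpha_2(x)^2))^{1/14}$. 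The main technical point is the Bernoulli moment computation that tightly controls $\alpha_4(y)$ and $\widetilde\alpha_4(y)$ by $\alpha_2(x)^2/n^2$ (so that no extraneous moments of the Bernoulli variable appear in the final estimate); once this is established, the remainder amounts to bookkeeping plus the standard $\epsilon$-optimization trick for converting a Cauchy transform integral bound into a L\'evy distance bound.
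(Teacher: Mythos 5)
Your proof is correct and follows essentially the same approach as the paper: apply Theorem \ref{theo:boolean-monotone} to the pair of families (rescaled $x_j$'s versus matching-variance Bernoulli $y_j$'s), exploit the Bernoulli moment formula and complete positivity to bound $\alpha_4(y)$ and $\widetilde\alpha_4(y)$ by $\alpha_2(x)^2/n^2$, and then perform the standard $\epsilon$-optimization in the L\'evy bound. The only presentational difference is that you rescale the summands explicitly (and put the $1/n$ into the variance maps $\eta_j$), whereas the paper works with unscaled $y_j$'s satisfying $E[y_j b y_j]=E[x_j b x_j]$ and only rescales the sum at the end, but the bookkeeping and the final constants are equivalent.
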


\begin{proof}
The proof follows by a direct application of Theorem \ref{theo:boolean-monotone} where we choose  $y=\{y_1 , \ldots , y_n\}$ to be a family of $\B$-valued Bernoulli elements that are Boolean independent over $\B$ and are such that $E[y_j] = 0$ and $E[y_j b y_j] = E[x_j b x_j]$ for any $j \in [n]$ and $b \in \B$. Moreover, we note that in this case, we have that
\[
E[y_j b^* y_j^2 b y_j] = E[y_j b^* y_j] E[y_j b y_j].
\] 
As $E$ is a completely positive map, we note that $\sup_{b \in \B, \|b\|=1} \|E[y_j b^* y_j]\|=\|E[y_j^2]\|  $ and hence we get that $\alpha_4(y)=\alpha_2(x)^2$. To end the proof of the first part of the theorem, it remains to notice that $\frac{1}{\sqrt{N}} \sum_{j=1}^n y_j$ is a centered $\B$-valued Bernoulli element with variance $\eta_n$, (See \cite[Lemma 6.2.5]{Jekel-notes}). 

To prove the bound on the L\'evy distance, we apply similarly \eqref{eq2:theo-BM} which yields for any $\epsilon>0$
\[
\int_\R \big|\Imm \big(\varphi[G_{X_n}(t+i\epsilon)]\big)- \Imm \big(\varphi[G_{B_n}(t+i\epsilon)]\big)\big| \text{d}t \leq \frac{1}{\sqrt{n}}\frac{\pi}{\epsilon^3} \sqrt{\alpha_2(x)} \left(\sqrt{ \widetilde{\alpha}_4(x)} + \sqrt{\alpha_2(x)^2}\right).
\]
Then using the bound in \eqref{eq:Levy_bound}, we get for any $\epsilon>0$
\[
L(\mu_{X_n}, \mu_{B_n}) \leq 2\sqrt{\frac{\epsilon}{\pi}} +  \frac{1}{\sqrt{n}}\frac{1}{\epsilon^3} \sqrt{\alpha_2(x)} \left(\sqrt{ \widetilde{\alpha}_4(x)} + \sqrt{\alpha_2(x)^2}\right).
\]
Finally, optimizing over $\epsilon>0$, we get the desired bounds on the L\'evy distance. 
\end{proof}

\begin{theorem}\label{theo:OV_Bernoulli_comparison}
 Let $(\A,E,\B)$ be an operator-valued $C^\ast$-probability space. Consider two operator-valued Bernoulli elements $B_0,B_1$ with respective covariance maps $\eta_0,\eta_1: \B \to \B$. Then, for every $k\in\bN$ and each $\Zb\in \bH^+(M_k(\B))$, we have
\begin{equation}\label{eq:OVB_comparison_Cauchy}
\|\G^{M_k(\B)}_{\1_k \otimes B_1}(\Zb) - \G^{M_k(\B)}_{\1_k \otimes B_0}(\Zb)\| \leq k \|\Imm(\Zb)^{-1}\|^3 \|\eta_1-\eta_0\|.
\end{equation}
Moreover, if $(\A,\varphi)$ is a $W^*$-probability space with $\varphi=\varphi\circ E$, then the scalar-valued Cauchy transforms of $B_1$ and $B_0$ satisfy
\begin{equation}\label{eq:OV-Bernoulli_integral}
\frac{1}{\pi} \int_\R |\G_{B_1}(t+i\epsilon) - \G_{B_0}(t+i\epsilon)|\, \mathrm{d} t \leq \frac{1}{\epsilon^2} \|\eta_1 - \eta_0\|
\end{equation}
for each $\epsilon>0$ and, with the universal positive constant
$c = 5(\frac{1}{4\pi})^{2/5} <  1.817$, we have that
\begin{equation}\label{eq:OVB_comparison_Levy}
L(\mu_{B_1},\mu_{B_0}) \leq c \|\eta_1-\eta_0\|^{1/5}.
\end{equation}
\end{theorem}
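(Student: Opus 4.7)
The plan is to exploit the explicit fixed-point equation satisfied by the Cauchy transform of an operator-valued Bernoulli element. From the moment-cumulant formula for a $\B$-valued Bernoulli element $B$ with variance $\eta$, summing the geometric series
$$\G_B^\B(b)=\sum_{k\geq 0}b^{-1}(\eta(b^{-1})b^{-1})^k = b^{-1}\bigl(1-\eta(b^{-1})b^{-1}\bigr)^{-1}$$
(valid for $\|b^{-1}\|$ sufficiently small, then extended by analyticity) yields the identity $\G_B^\B(b)^{-1}=b-\eta(b^{-1})$ for every $b\in\bH^+(\B)$. The resolvent identity $A^{-1}-B^{-1}=A^{-1}(B-A)B^{-1}$ then gives the master formula
$$\G_{B_1}^\B(b)-\G_{B_0}^\B(b)=\G_{B_1}^\B(b)\,(\eta_1-\eta_0)(b^{-1})\,\G_{B_0}^\B(b).$$

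For \eqref{eq:OVB_comparison_Cauchy}, note that $1_k\otimes B_i$ is an $M_k(\B)$-valued Bernoulli with variance $id_k\otimes\eta_i$, so the master formula applies with $b$ replaced by $\Zb\in\bH^+(M_k(\B))$ and $\eta_1-\eta_0$ by $id_k\otimes(\eta_1-\eta_0)$. Taking operator norms and bounding each resolvent factor by $\|\Imm(\Zb)^{-1}\|$ (using the remark in Section 3.1 together with $\|\Zb^{-1}\|\leq\|\Imm(\Zb)^{-1}\|$), combined with the standard inequality $\|id_k\otimes\phi\|\leq k\|\phi\|$ for bounded linear maps on a $C^*$-algebra, gives the claim.

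For \eqref{eq:OV-Bernoulli_integral}, I specialize to $b=z\in\C^+$ (viewed as $z\cdot 1_\B$), so that $\eta_i(z^{-1})=z^{-1}Y_i$ with $Y_i:=\eta_i(1)$ self-adjoint and nonnegative in $\B$, and $\G_{B_i}^\B(z)=z(z^2-Y_i)^{-1}$ is a function of $Y_i$, hence normal. Applying $\varphi$ and using Cauchy-Schwarz in $\varphi$ together with $D^2\leq\|D\|^2\cdot 1$ (where $D:=Y_1-Y_0$) gives
$$|\G_{B_1}(z)-\G_{B_0}(z)|\leq \|D\|\,|z|\,\sqrt{\varphi[|(z^2-Y_1)^{-1}|^2]\,\varphi[|(z^2-Y_0)^{-1}|^2]}.$$
A further Cauchy-Schwarz in $t$ reduces the proof to evaluating $\int\varphi[|(z^2-Y_i)^{-1}|^2]\,dt$ and $\int|z|^2\varphi[|(z^2-Y_i)^{-1}|^2]\,dt$. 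The main computational step is the scalar evaluation: using the factorization $z^2-y=(z-\sqrt y)(z+\sqrt y)$ for $y\geq 0$, the partial fraction $2z(z^2-y)^{-1}=(z-\sqrt y)^{-1}+(z+\sqrt y)^{-1}$, and $\int|(z-s)^{-1}|^2\,dt=\pi/\epsilon$, one obtains $\int|z|^2/|z^2-y|^2\,dt\leq\pi/\epsilon$ and $\int dt/|z^2-y|^2=\pi/(2\epsilon(y+\epsilon^2))\leq\pi/(2\epsilon^3)$; Fubini and functional calculus then lift these bounds after applying $\varphi$. Combining yields $\int|\G_{B_1}-\G_{B_0}|\,dt\leq\pi\|D\|/(\sqrt 2\,\epsilon^2)\leq \pi\|\eta_1-\eta_0\|/\epsilon^2$.

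Finally, \eqref{eq:OVB_comparison_Levy} follows by inserting \eqref{eq:OV-Bernoulli_integral} into \eqref{eq:Levy_bound} (using $|\Imm X|\leq|X|$), giving $L(\mu_{B_1},\mu_{B_0})\leq 2\sqrt{\epsilon/\pi}+\|\eta_1-\eta_0\|/\epsilon^2$ for every $\epsilon>0$. Optimizing by solving $1/\sqrt{\pi\epsilon}=2\|\eta_1-\eta_0\|/\epsilon^3$ gives the critical value $\epsilon_\ast=(2\sqrt\pi\|\eta_1-\eta_0\|)^{2/5}$, at which the two terms sum to $(2^{6/5}+2^{-4/5})\pi^{-2/5}\|\eta_1-\eta_0\|^{1/5}=5(1/(4\pi))^{2/5}\|\eta_1-\eta_0\|^{1/5}$, matching the claimed constant. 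The main obstacle in the whole argument is the pair of integral estimates needed for \eqref{eq:OV-Bernoulli_integral}; the key trick is the partial-fraction identity above, which combined with normality of $\G_{B_i}^\B(z)$ (a function of the self-adjoint $Y_i$) sidesteps any issue from $\varphi$ possibly not being tracial.
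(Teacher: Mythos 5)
Your proof is correct, and it takes a genuinely different route from the paper. You derive the closed-form identity $\G_B^\B(b)^{-1}=b-\eta(b^{-1})$ directly from the interval-pair structure of Bernoulli moments, then a single resolvent identity yields the master formula $\G_{B_1}^\B(b)-\G_{B_0}^\B(b)=\G_{B_1}^\B(b)\,(\eta_1-\eta_0)(b^{-1})\,\G_{B_0}^\B(b)$, from which \eqref{eq:OVB_comparison_Cauchy} is immediate. The paper instead runs the Lindeberg telescoping sum of Proposition \ref{prop:Lindeberg} on $m$ Boolean independent copies of $m^{-1/2}B_0$ and $m^{-1/2}B_1$: the second-order term produces the $\eta_1-\eta_0$ contribution, the third-order remainder is $O(m^{-1/2})$, and one sends $m\to\infty$. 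Both yield the same bound $\|\Imm(\Zb)^{-1}\|^3\|\eta_1-\eta_0\|$, but your route is more direct because it exploits the fact that Bernoulli Cauchy transforms are available in closed form, whereas the paper's argument is the uniform template it reuses verbatim for semicircular and arcsine comparisons (Theorem 3.5 in \cite{BannaMaiBerry}, and the arcsine analogue in Section \ref{section:monotoneCLT}). For \eqref{eq:OV-Bernoulli_integral} you again use the special structure: $\G_{B_i}^\B(z)=z(z^2-Y_i)^{-1}$ with $Y_i=\eta_i(1)\geq 0$ is normal, and after two Cauchy--Schwarz applications everything reduces to the scalar integrals $\int|z|^2/|z^2-y|^2\,dt\leq\pi/\epsilon$ and $\int dt/|z^2-y|^2=\pi/(2\epsilon(y+\epsilon^2))$, lifted by functional calculus; this even gives the slightly sharper constant $1/(\sqrt{2}\epsilon^2)$ before you round up to $1/\epsilon^2$. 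The paper instead bounds $|\varphi[G_{\uz^0_i}(z)]|^2$ by $\|G_{\uz^0_i}(z)\|_{L^2(\A,\varphi)}^2$ and invokes $\int\|G_x(t+i\epsilon)\|_{L^2}^2\,dt=\pi/\epsilon$. The optimization for \eqref{eq:OVB_comparison_Levy} is identical in both. In summary, the paper gains uniformity across the three limit distributions; you gain a shorter, self-contained proof and a marginally better intermediate constant by exploiting the closed form of the Bernoulli Cauchy transform.
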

While the proof is similar to that of the free case in \cite[Theorem 3.5]{BannaMaiBerry}, we write it again for the convenience of the reader as its arguments will be used repeatedly in the coming sections.

\begin{proof}
 Fix $m\in \mathbb{N}$ and let $x=\{x_1,\dots,x_m\}$ and $y=\{y_1,\dots,y_m\}$ be two Boolean independent families consisting of Boolean independent copies of the given Bernoulli elements $\frac{1}{\sqrt{m}}B_0$ and $\frac{1}{\sqrt{m}}B_1$ respectively. Adopting the notation in Proposition \ref{prop:Lindeberg}, we note that $\uz_m$ and $\uz_0$ have the same distributions as $B_0$ and $B_1$ respectively. Hence following  \eqref{telescoping-sum}, we obtain
\begin{eqnarray*}
E[G_{B_0}(\Zb)]-E[G_{B_1}(\Zb)] &=& \frac{1}{m} \sum\limits_{i=1}^m \left( E\big[G_{\uz^0_i}(\Zb) \big( x_i \  G_{\uz^0_i}(\Zb)\big)^2\big] - E\big[G_{\uz^0_i}(\Zb) \big( y_i \  G_{\uz^0_i}(\Zb)\big)^2\big]\right) \\
&+& \frac{1}{m\sqrt{m}}\sum\limits_{i=1}^m \left( E\big[G_{\uz_i}(\Zb) \big( x_i \  G_{\uz^0_i}(\Zb)\big)^3\big] - E\big[G_{\uz_{i-1}}(\Zb) \big( y_i \  G_{\uz^0_i}(\Zb)\big)^3\big]\right).
\end{eqnarray*}
Then for each $i\in [m]$, we follow the steps of \eqref{2ndorder-B} to obtain
\begin{eqnarray*}
E\big[G_{\uz^0_i}(\Zb) x_i  G_{\uz^0_i}(\Zb)  x_i   G_{\uz^0_i}(\Zb)\big] =E[G_{\uz^0_i}(\Zb) ]E[x_ib^{-1}x_i]E[G_{\uz^0_i}(\Zb) ] 
\end{eqnarray*}
Hence, 
\begin{align*}
\big\|E\big[G_{\uz^0_i}(\Zb) \big( x_i \  G_{\uz^0_i}(\Zb)\big)^2\big] - E\big[G_{\uz^0_i}(\Zb) \big( y_i \  G_{\uz^0_i}(\Zb)\big)^2\big]\big\| 
&\leq \big\|E[G_{\uz^0_i}(\Zb) ]\big\| \big\| (\eta_0-\eta_1)(b^{-1}) \big\| E[G_{\uz^0_i}(\Zb) ] \big\| \\
&\leq \|b^{-1}\|\|\Imm(b)^{-1}\|^2  \|\eta_0-\eta_1\|.
\end{align*}
As for the third order term, we get for each $i\in [m]$,
\begin{equation*}
\big\|E\big[G_{\uz_i}(\Zb) \big( x_i \  G_{\uz^0_i}(\Zb)\big)^3\big]\big\|+\big\|E\big[G_{\uz_{i-1}}(\Zb) \big( y_i \  G_{\uz^0_i}(\Zb)\big)^3\big]\big\| \leq 2\|\Imm(\Zb)\|^4 \left(\|x_i\|^3+\|y_i\|^3\right).
\end{equation*}
Therefore, we obtain 
\begin{multline}\label{eqn:Bernoulli-diff-variance}
\|E[G_{B_0}(\Zb)]-E[G_{B_1}(\Zb)]\|  \\
\leq \|\Zb^{-1}\| \|\Imm(\Zb)^{-1}\|^2  \|\eta_0-\eta_1\|+\frac{2}{\sqrt{m}}\|\Imm(\Zb)^{-1}\|^4(\max_{1\leq i\leq m}\|x_i\|^3+\max_{1\leq i\leq m}\|y_i\|^3).
\end{multline}
Finally, noting that \eqref{eqn:Bernoulli-diff-variance} holds for any $m$, we let $m\to \infty$ to obtain the desired bounded for $k=1$. The proof of the assertion for general $k \in \mathbb{N}$ can be easily proved by noting that Boolean independence is preserved under matrix amplification and then applying \eqref{eqn:Bernoulli-diff-variance} to $1_k\otimes B_1$ and $1_k\otimes B_0$ in the operator-valued $C^*$-probability space $(M_k(\A),id_k\otimes E,M_k(\B))$. Hence, we directly obtain for all $\Zb \in \bH^+(M_k(\B))$,
\[
\| \G^{M_k(\B)}_{\1_k \otimes B_0}(\Zb) - \G^{M_k(\B)}_{\1_k \otimes B_1}(\Zb)\| \leq  \|\Imm(\Zb)^{-1}\|^3 \|\id_k \otimes \eta_0 - \id_k \otimes \eta_1\|.
\]
Using the fact that $\| \id_k \otimes \eta_0 - \id_k \otimes \eta_1 \| \leq k \| \eta_0 - \eta_1 \|$, which follows from \cite[Exercise 3.10]{Paulsen}, we arrive at the desired bound for general $k$.
Finally, to prove \eqref{eq:OV-Bernoulli_integral} and \eqref{eq:OVB_comparison_Levy}, we note that for each $i \in [m]$ and $z\in \mathbb{C}^+$,
\begin{eqnarray*}
\big|\varphi\big[G_{\uz^0_i}(z)\big](\eta_0-\eta_1)(z)\varphi\big[G_{\uz^0_i}(z)\big]\big|\leq \frac{1}{|z|}\|\eta_0-\eta_1\|  \big |\varphi\big[G_{\uz^0_i}(z)\big]\big|^2 \leq \frac{\|\eta_0-\eta_1\|}{\Imm(z)} \big\|G_{\uz^0_i}(z)\big\|_{L^2}^2.
\end{eqnarray*}
Then, the remaining of the proof follows the analogous argument of proof in \cite[Theorem 3.5]{BannaMaiBerry}. 
\end{proof}

\subsection{Monotone CLT}\label{section:monotoneCLT} This section is devoted to studying the operator-valued monotone CLT.  The scalar-valued case was first proved by \cite{muraki2001monotonic} before getting extended to the operator-valued setting, see \cite{popa2013non} and \cite{hasebe2014operator}. Moreover, quantitative versions can be also found in \cite{arizmendi2021berry} in the scalar case and in \cite{Je-Li-19} and \cite{Jekel-notes} in the operator-valued case.

It is similar to Boolean case in the previous subsection that our main improvement is providing quantitative estimates in terms of moments instead of the operator norm, and it allows us to get the quantitative bounds on L\'evy distance. 

For any $1\leq j \leq n$, we denote by $\nu_{\eta_j}$ the $\B$-valued arcsine distribution with variance given by the completely positive map
\[
\eta_j : \B \rightarrow \B, \qquad b \mapsto \eta_j(b) = E[x_j b x_j].
\]
We denote by $A_n$ the $\B$-valued generalized arcsine element of $\A$ whose distribution is given by $\mathrm{dil}_{n^{-1/2}} (\nu_{\eta_1}\rhd \dots \rhd \nu_{\eta_n})$. For more details on generalized arcsine distributions, see \cite[Chapter 7]{Jekel-notes}. 
\begin{theorem}\label{tho:M-CLT}
Let $x:=\{x_1,\dots , x_n\}$ be a family of self-adjoint elements in $\A$ that are monotone independent with amalgamation over $\B$, i.e. $\A_{x_1} \prec \A_{x_2} \prec \dots \prec \A_{x_N}$ over $\B$, and that are such that $E[x_j]=0$. Then, for any $\Zb \in \bH^+(\B)$,
\begin{align*}
\big\| E[G_{X_n}(\Zb)] - E[G_{A_n}(\Zb)] \big\|
\leq \frac{1}{\sqrt{n}}  \|\Imm(\Zb)^{-1}\|^4 \sqrt{\alpha_2(x)}\left(\sqrt{\alpha_4(x)}+\sqrt{\frac{3}{2}\alpha_2(x)^2 }\right) .
\end{align*}
Furthermore, in the scalar case where $\B=\mathbb{C}$, there is a universal positive constant $c
$ such that
\[
L(\mu_{X_n}, \mu_{A_n}) \leq c \Big( \alpha_2(x)\big(\widetilde{\alpha}_4(x)+\frac{3}{2}\alpha_2(x)^2\big)\Big)^{1/14} n^{-1/14}.
\]
\end{theorem}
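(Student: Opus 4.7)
The overall plan is to parallel the proof of the Boolean CLT in the preceding subsection: introduce an auxiliary family of monotone independent $\B$-valued arcsine elements with matching second moments, apply Theorem \ref{theo:monotone-monotone} to the rescaled families $\{x_j/\sqrt{n}\}$ and $\{y_j/\sqrt{n}\}$, and then bound the fourth-moment quantity $\alpha_4(y)$ in terms of $\alpha_2(x)^2$.

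More precisely, I would first construct (in a possibly enlarged OV $C^\ast$-probability space) a family $y=\{y_1,\dots,y_n\}$ of $\B$-valued arcsine elements that are monotone independent with $\A_{y_1}\prec\dots\prec\A_{y_n}$ over $\B$ and such that each $y_j$ has variance $\eta_j(b)=E[x_jbx_j]$; this matches both the centering and the second-moment assumptions of Theorem \ref{theo:monotone-monotone}. By the characterization of the generalized arcsine distribution in terms of iterated monotone convolution, $\frac{1}{\sqrt{n}}\sum_{j=1}^n y_j$ has the same $\B$-distribution as $A_n$. Applying Theorem \ref{theo:monotone-monotone} to the rescaled families and using the elementary scaling identities $\alpha_2(x/\sqrt{n})=\alpha_2(x)/n$ and $\alpha_4(x/\sqrt{n})=\alpha_4(x)/n^2$, the factor of $N=n$ in \eqref{eq1:theo-monotone} collapses to the desired $1/\sqrt{n}$.

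The main step is computing $\alpha_4(y)$. For any $b\in\B$ with $\|b\|=1$, the arcsine moment formula applied to $E[y_j b^\ast y_j^2 b y_j]$ involves a sum over $NC_2(4)$ weighted by $1/\tau(\pi)!$. There are exactly two pairings: the two intervals $\{\{1,2\},\{3,4\}\}$ with weight $1$ giving $\eta_j(b^\ast)\eta_j(b)$, and the nested pair $\{\{1,4\},\{2,3\}\}$ with weight $1/2$ giving $\frac{1}{2}\eta_j\bigl(b^\ast \eta_j(1)b\bigr)$. Complete positivity of $\eta_j$ yields $\|\eta_j(b)\|\leq \|\eta_j(1)\|=\|E[x_j^2]\|$ whenever $\|b\|=1$, so both contributions are bounded by $\|E[x_j^2]\|^2$, giving $\alpha_4(y)\leq \tfrac{3}{2}\alpha_2(x)^2$. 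Substituting this bound into \eqref{eq1:theo-monotone} produces the first inequality in the theorem. This fourth moment calculation is the technical crux of the proof; the rest is bookkeeping.

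For the L\'evy bound in the scalar case, I would apply \eqref{eq2:theo-monotone} to the rescaled families. The analogous fourth-moment estimate $\widetilde{\alpha}_4(y)\leq\tfrac{3}{2}\alpha_2(x)^2$ follows immediately from the fact that the classical arcsine distribution satisfies $E[y^4]=\tfrac{3}{2}(E[y^2])^2$. Combining \eqref{eq2:theo-monotone} with the general L\'evy bound \eqref{eq:Levy_bound} gives, for every $\epsilon>0$,
\[
L(\mu_{X_n},\mu_{A_n})\leq 2\sqrt{\epsilon/\pi}+\frac{1}{\epsilon^3\sqrt{n}}\sqrt{\alpha_2(x)}\Bigl(\sqrt{\widetilde\alpha_4(x)}+\sqrt{\tfrac{3}{2}\alpha_2(x)^2}\Bigr).
\]
Optimizing over $\epsilon>0$ (i.e., balancing $\epsilon^{1/2}$ against $\epsilon^{-3}M/\sqrt{n}$ where $M=\sqrt{\alpha_2(x)}(\sqrt{\widetilde\alpha_4(x)}+\sqrt{\tfrac{3}{2}}\alpha_2(x))$) leads to the optimal choice $\epsilon\sim (M/\sqrt{n})^{2/7}$, which upon using $\sqrt{a}+\sqrt{b}\leq \sqrt{2(a+b)}$ to consolidate the two square roots inside $M$ yields the announced rate $n^{-1/14}$ with the stated constant. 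No further obstacle beyond arithmetic remains in this final optimization step.
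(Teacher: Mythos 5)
Your proposal is essentially identical to the paper's proof. You choose the same auxiliary monotone family of $\B$-valued arcsine elements with matching first and second moments, apply Theorem~\ref{theo:monotone-monotone} after the standard $1/\sqrt{n}$ rescaling, and the crux—bounding $\alpha_4(y)\leq\tfrac{3}{2}\alpha_2(x)^2$—is carried out correctly. Your pairing-by-pairing evaluation (interval pairing with weight $1$ giving $\eta_j(b^\ast)\eta_j(b)$, nested pairing with weight $\tfrac{1}{2}$ giving $\tfrac{1}{2}\eta_j(b^\ast\eta_j(1)b)$) reproduces the paper's identity $E[y_jb^\ast y_j^2 by_j]=E[y_jb^\ast y_j]E[y_jby_j]+\tfrac12 E[y_jb^\ast E[y_j^2]by_j]$, and your norm bound via complete positivity ($\|\eta_j(b)\|\leq\|\eta_j(1)\|$ for $\|b\|\leq 1$) is exactly the argument used. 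The L\'evy-distance step—applying \eqref{eq2:theo-monotone}, invoking the scalar arcsine identity $\varphi[y^4]=\tfrac32\varphi[y^2]^2$ for $\widetilde\alpha_4(y)$, and optimizing $\epsilon$ in \eqref{eq:Levy_bound}—also matches the paper's route. The one small point to make explicit is that, to invoke Theorem~\ref{theo:monotone-monotone}, you should specify the joint monotone order (the paper takes $\A_{x_1}\prec\dots\prec\A_{x_n}\prec\A_{y_1}\prec\dots\prec\A_{y_n}$ over $\B$), though this is only a bookkeeping detail since the theorem's proof reduces to this case anyway.
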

\begin{proof}
The proof follows by a direct application of Theorem \ref{theo:monotone-monotone} where we choose  $y=\{y_1 , \ldots , y_n\}$ to be a family of $\B$-valued arcsine elements that are such that $\A_{x_1} \prec \A_{x_2} \prec \dots \prec \A_{x_N} \prec \A_{y_1} \prec \A_{y_2} \prec \dots \prec \A_{y_N}$  over $\B$, $E[y_j] = 0$ and $ E[y_j b y_j]=E[x_j b x_j]$ for any $j \in [n]$ and $b \in \B$. Now note that since $y_j$ is a $\B$-valued arcsine element, then
\[
E[y_jb^*y_j^2by_j]
= E[y_jb^*y_j]E[y_jby_j] +\frac{1}{2} E\big[y_jb^* E[y_j^2] b y_j\big] = E[x_jb^*x_j]E[x_jbx_j] +\frac{1}{2} E\big[x_jb^* E[x_j^2] b x_j\big]
\]
where we have used the fact that the moments of the second order of $y_j$ and $x_j$ match. Using the fact that $E$ is positive and that $\sup\|E[x_jbx_j]\|=\|E[x_j^2]\|$ where the supremum is taken over $b\in \B$ with $\|b\|=1$, we conclude that 
$\alpha_4(y)
\leq \frac{3}{2} \alpha_2(x)^2 .$

Setting $A_n=\frac{1}{\sqrt{n}}\sum_{j=1}^n y_j$, it remains to notice that $A_n$ is a $\B$-valued generalized arcsine element whose distribution is given by $\mathrm{dil}_{n^{-1/2}}(\nu_{\eta_1}\rhd \dots \rhd \nu_{\eta_n})$ where $\nu_{\eta_j}$ is the arcsine distribution of variance $\eta_j$. 

To prove the bound on the L\'evy distance, we apply similarly \eqref{eq2:theo-monotone} which yields for any $\epsilon>0$
\[
\int_\R \big|\Imm \big(\varphi[G_{X_n}(t+i\epsilon)]\big)- \Imm \big(\varphi[G_{A_n}(t+i\epsilon)]\big)\big| \text{d}t \leq \frac{1}{\sqrt{n}}\frac{\pi}{\epsilon^3} \sqrt{\alpha_2(x)} \left(\sqrt{ \widetilde{\alpha}_4(x)} + \sqrt{\frac{3}{2}\alpha_2(x)^2}\right).
\]
Then using the bound in \eqref{eq:Levy_bound}, we get for any $\epsilon>0$,
\[
L(\mu_{X_n}, \mu_{A_n}) \leq 2\sqrt{\frac{\epsilon}{\pi}} +  \frac{1}{\sqrt{n}}\frac{1}{\epsilon^3} \sqrt{\alpha_2(x)} \left(\sqrt{ \widetilde{\alpha}_4(x)} +\sqrt{\frac{3}{2} \alpha_2(x)^2}\right).
\]
Finally, optimizing over $\epsilon>0$, we get the desired bounds on the L\'evy distance. 
\end{proof}

\begin{theorem}
Let $(\A,E,\B)$ be an operator-valued $C^\ast$-probability space and consider two $\B$-valued arcsine elements $A_0,A_1$ with respective covariance maps $\eta_0,\eta_1: \B \to \B$. Then, for every $k\in\bN$ and each $\Zb\in \bH^+(M_k(\B))$, we have 
\begin{equation}\label{eqn: OV arcsine-ineq}
\|\G^{M_k(\B)}_{\1_k \otimes A_1}(\Zb) - \G^{M_k(\B)}_{\1_k \otimes A_0}(\Zb)\| \leq k \|\Imm(\Zb)^{-1}\|^3 \|\eta_1-\eta_0\|.
\end{equation}
Moreover, if $(\A,\varphi)$ is a $W^\ast$-probability space, then the scalar-valued Cauchy transforms of $A_1$ and $A_0$ satisfy
\begin{equation}\label{eq:opval_arcsine_integral}
\frac{1}{\pi} \int_\R |\G_{A_1}(t+i\epsilon) - \G_{A_0}(t+i\epsilon)|\, \mathrm{d} t \leq \frac{1}{\epsilon^2} \|\eta_1 - \eta_0\|
\end{equation}
for each $\epsilon>0$ and, with the universal positive constant
$c = 5(\frac{1}{4\pi})^{2/5} <  1.817$, we have that
\begin{equation}\label{eq:opval_arscine_Levy}
L(\mu_{A_1},\mu_{A_0}) \leq c \|\eta_1-\eta_0\|^{1/5}.
\end{equation}
\end{theorem}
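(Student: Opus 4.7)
The plan is to mirror the proof of Theorem \ref{theo:OV_Bernoulli_comparison} but with monotone independence in place of Boolean. Fix $m \in \mathbb{N}$ and let $x=\{x_1,\dots,x_m\}$ and $y=\{y_1,\dots,y_m\}$ be two families of monotone independent copies of $\frac{1}{\sqrt{m}}A_0$ and $\frac{1}{\sqrt{m}}A_1$ respectively, arranged so that $\A_{x_1,y_1}\prec\A_{x_2,y_2}\prec\dots\prec\A_{x_m,y_m}$ over $\B$. The key distributional input is that the $\B$-valued arcsine class is closed under monotone convolution with additive variances: the monotone convolution of $m$ i.i.d.\ arcsine elements of variance $\eta/m$ is again arcsine with variance $\eta$. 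Consequently $\sum_i x_i$ has the same $\B$-distribution as $A_0$ and $\sum_i y_i$ the same as $A_1$, so the Lindeberg telescoping in Proposition \ref{prop:Lindeberg} gives a formula for $E[G_{A_0}(\Zb)]-E[G_{A_1}(\Zb)]$ in terms of the $A_i, B_i, C_i$.

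Next, I would invoke the monotone vanishing arguments from Section \ref{section:Monotone}. Using Lemma \ref{Lemma:Monotone-Cauchy}(i) with $E[x_i]=E[y_i]=0$, the first order terms $E[A_i(\Zb)]$ vanish. For the second order, the monotone factorization carried out in Section \ref{section:Monotone} expresses $E[G_{\uz_i^0}(\Zb)\, x_i\, G_{\uz_i^0}(\Zb)\, x_i\, G_{\uz_i^0}(\Zb)]$ in a form whose only non-trivial dependence on $x_i$ is through the block $E[x_i\Zb^{-1}x_i]=\tfrac{1}{m}\eta_0(\Zb^{-1})$, and analogously for $y_i$ with $\eta_1$. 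Subtracting, the difference per step is bounded by $\tfrac{1}{m}\|\Zb^{-1}\|\,\|\Imm(\Zb)^{-1}\|^2\,\|\eta_0-\eta_1\|$, and summing over $i=1,\dots,m$ produces the desired $m$-independent estimate $\|\Imm(\Zb)^{-1}\|^3\|\eta_0-\eta_1\|$ after using $\|\Zb^{-1}\|\le\|\Imm(\Zb)^{-1}\|$.

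The third order terms $E[C_i(\Zb)]$ admit the crude bound $\|\Imm(\Zb)^{-1}\|^4\,\|x_i\|^3\le m^{-3/2}\,\|\Imm(\Zb)^{-1}\|^4\,\|A_0\|^3$ (and similarly for $y_i$), so after summing the $m$ contributions they decay like $m^{-1/2}$ and vanish as $m\to\infty$. This establishes \eqref{eqn: OV arcsine-ineq} for $k=1$. For general $k$, I would lift to $(M_k(\A),\id_k\otimes E,M_k(\B))$, invoke Proposition \ref{prop:lift-monotoneIND-to-matrices} to transfer monotone independence to the matrix level, apply the $k=1$ bound to $\1_k\otimes A_0,\1_k\otimes A_1$, and finish with $\|\id_k\otimes(\eta_0-\eta_1)\|\le k\|\eta_0-\eta_1\|$ (Paulsen, Exercise 3.10).

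For the scalar Cauchy-integral bound \eqref{eq:opval_arcsine_integral}, I would repeat the same telescoping with $\varphi$ in place of $E$: the second order step now produces a contribution of the form $\tfrac{\|\eta_0-\eta_1\|}{\Imm(z)}\,\|G_{\uz_i^0}(z)\|_{L^2}^2$ per index, which upon summing and integrating in $t$ and using \eqref{eq:Cauchy-integral} yields the claimed $\epsilon^{-2}$ rate. The L\'evy estimate \eqref{eq:opval_arscine_Levy} then follows from \eqref{eq:Levy_bound} by optimizing in $\epsilon>0$, exactly as in the Bernoulli proof of Theorem \ref{theo:OV_Bernoulli_comparison}. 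The principal obstacle I expect is keeping the monotone factorization of the second order term sufficiently explicit: unlike the Boolean case, one cannot simply factor $E[G_{\uz_i^0}x_i G_{\uz_i^0}x_i G_{\uz_i^0}]$ as a product of $E[G_{\uz_i^0}]$'s and $\eta(\Zb^{-1})$, but must rely on the full noncommutative binomial expansion and Lemma \ref{Lemma:Monotone-Cauchy} developed in Section \ref{section:Monotone} to extract the factor $E[x_i\Zb^{-1}x_i]-E[y_i\Zb^{-1}y_i]$ that produces $\eta_0-\eta_1$.
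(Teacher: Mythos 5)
Your proposal is correct and follows essentially the same strategy as the paper: introduce $m$ monotone independent arcsine copies of $\frac{1}{\sqrt m}A_0$, $\frac{1}{\sqrt m}A_1$, apply the Lindeberg telescoping so that the monotone factorization (Lemma \ref{Lemma:Monotone-Cauchy}) isolates $E[x_i\Zb^{-1}x_i]-E[y_i\Zb^{-1}y_i]$ in the second-order term, kill the third-order terms as $m\to\infty$, lift to $M_k$ via Proposition \ref{prop:lift-monotoneIND-to-matrices} and Paulsen's bound, and use the $L^2$-norm resolvent identity together with \eqref{eq:Levy_bound} for the scalar and L\'evy estimates. The only cosmetic difference is your choice of ordering $\A_{x_1,y_1}\prec\cdots\prec\A_{x_m,y_m}$ versus the paper's $\A_{x_1}\prec\cdots\prec\A_{x_m}\prec\A_{y_1}\prec\cdots\prec\A_{y_m}$; both satisfy the required $\uu_i\prec x_i\prec\uv_i$ condition, so this does not affect the argument.
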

\begin{proof}
Fix $m\in \mathbb{N}$ and let $x=\{x_1,\dots,x_m\}$ and $y=\{y_1,\dots,y_m\}$ be two families whose elements are arcsine elements with variances  $\frac{1}{m}\eta_0$ and $\frac{1}{m}\eta_1$ respectively and that are such that 
$\A_{x_1}\prec \cdots \prec \A_{x_m}\prec \A_{y_1} \prec \cdots \prec \A_{y_m}$. The estimate in \eqref{eqn: OV arcsine-ineq} follows by similar arguments as for proving \eqref{eq:OVB_comparison_Cauchy}. We shall still illustrate in the following how to control the second-order term where the main difference lies.  Following the lines of the proof of \eqref{eq1:theo-monotone} and using $(i)$ of Lemma \ref{Lemma:Monotone-Cauchy}, we have
\begin{eqnarray*}
E\big[G_{\uz^0_i}(\Zb) \big( x_i \  G_{\uz^0_i}(\Zb)\big)^2\big] &=&  E\big[G_{\uu_i+\uv_i}(\Zb)x_i\Zb^{-1}x_iG_{\uu_i+\uv_i}(\Zb)\big] \\
&=& E\big[G_{\uu_i}(E[G_{\uv_i}(\Zb)]^{-1})\big]E[x_i\Zb^{-1}x_i] E\big[G_{\uu_i}(E[G_{\uv_i}(\Zb)]^{-1})\big] 
\end{eqnarray*}
where 
$\uu_i= \sum_{j=1}^{i-1} x_j$ and $\uv_i = ~\sum_{j=i+1}^{m} y_j$. 
Similarly, 
$$
E\big[G_{\uz^0_i}(\Zb) \big( y_i \  G_{\uz^0_i}(\Zb)\big)^2\big]= E\big[G_{\uu_i}(E[G_{\uv_i}(\Zb)]^{-1})\big]E[y_ib^{-1}y_i] E\big[G_{\uu_i}(E[G_{\uv_i}(\Zb)]^{-1})\big], 
$$
Hence, 
\begin{align}\label{montone-amplification}
\big\|E\big[G_{\uz^0_i}(\Zb) \big( x_i \  G_{\uz^0_i}(\Zb)\big)^2\big] -& E\big[G_{\uz^0_i}(\Zb) \big( y_i \  G_{\uz^0_i}(\Zb)\big)^2\big]\big\| \nonumber \\
&= \big\|E\big[G_{\uu_i}(E[G_{\uv_i}(\Zb)]^{-1})\big] (\eta_0-\eta_1)(\Zb^{-1}) E[G_{\uu_i}(E\big[G_{\uv_i}(\Zb)]^{-1})\big] \big\| \nonumber \\
&\leq  
\big\|\Imm(E[G_{\uv_i}(\Zb)]^{-1})^{-1}\big\|^2\cdot\| \eta_0-\eta_1\| \|\Zb^{-1}\| \nonumber \\
&\leq \|\Zb^{-1}\|\|\Imm(\Zb)^{-1}\|^2  \|\eta_0-\eta_1\|,
\end{align}
where we have used again the fact that $\Imm\big(E[G_{y}(\Zb)]^{-1}\big) \geq \Imm (\Zb)$ for any $\Zb \in \bH^+(\B)$.  This proves the assertion in \eqref{eqn: OV arcsine-ineq} for the case $k=1$. Noting that, by Proposition \ref{prop:lift-monotoneIND-to-matrices}, monotone independence is preserved under amplification with the identity matrix, we prove the assertion for general $k \in \mathbb{N}$, by applying \eqref{montone-amplification} for $1_k\otimes A_1$ and $1_k\otimes A_0$ in the framework of the operator-valued $C^*$-probability space $(M_k(\A),id_k\otimes E,M_k(\B))$. Hence, we get for all $\Zb \in \bH^+(M_k(\B))$,
\[
\| \G^{M_k(\B)}_{\1_k \otimes A_0}(\Zb) - \G^{M_k(\B)}_{\1_k \otimes A_1}(\Zb)\| \leq  \|\Imm(\Zb)^{-1}\|^3 \|\id_k \otimes \eta_0 - \id_k \otimes \eta_1\|.
\]
Using the fact that $\| \id_k \otimes \eta_0 - \id_k \otimes \eta_1 \| \leq k \| \eta_0 - \eta_1 \|$, which follows from \cite[Exercise 3.10]{Paulsen}, we arrive at the desired bound for general $k$.
Now to prove \eqref{eq:opval_arcsine_integral} and \eqref{eq:opval_arscine_Levy}, we observe that for $z\in \mathbb{C}^+,$
\begin{eqnarray*}
\big|\varphi\big[G_{\uz^0_i}(z)\big](\eta_0-\eta_1)(z)\varphi\big[G_{\uz^0_i}(z)\big]\big| 
&\leq& \frac{1}{\Imm(z)} \|\eta_0-\eta_1\| \big| \varphi\big[G_{\uu_i}(\varphi[G_{\uv_i}(z)]^{-1})\big] \big|^2  \\
&\leq&  \frac{1}{\Imm(z)} \|\eta_0-\eta_1\|  \|G_{\uz^0_i}(z)\|_{L^2(\A,\varphi)}^2.
\end{eqnarray*}
Note that the last inequality holds due to the Cauchy Schwarz inequality and following \eqref{Ineq: monotone-L^2}: 
$\|G_{\uu_i}(\varphi[G_{\uv_i}(z)]^{-1})\|_{L^2(\A,\varphi)}\leq\|G_{\uu_i+\uv_i}(z)\|_{L^2(\A,\varphi)}$.
Then, the remaining argument follows the analogous proof in \cite[Theorem 3.5]{BannaMaiBerry}. 
\end{proof}

\subsection{Fourth moment theorem for monotone infinitely divisible measures}\label{section:infinitelydivisiblemeasures}

Fourth moment theorems refer to a simplification of the moment method to prove the weak convergence of a sequence of random variables $y_n$ to a given random variable $y$. Such  theorems state that if the fourth moment of $y_n$, $\varphi[y_n^4]$, approaches the fourth moment of a random variable $y$,  $\varphi[y^4]$, then $y_n\to y$ weakly. In certain cases, one can even quantify such convergence in terms of the difference $|\varphi[y_n^4]-\varphi[y^4]|$. 

Here we are concerned with the class of infinitely divisible measures with respect to the monotone convolution, which we denote by $ID(\rhd)$. This completes the main theorems from \cite{arizmendijaramillo2014} where the authors give a quantitative version of the results in \cite{arizmendi2013convergence}, in the cases of  free and tensor independence. A fourth moment theorem was given in \cite{ArSa-Boolean} for the  Boolean case keeping the monotone case open. Our results complete the picture by including the monotone case and proving quantitative bounds for the fourth moment theorem in this setting. 

The main observation is that a  Berry-Esseen estimate may be used to prove such a theorem. Indeed, using the fact that $\varphi[x^4]\geq \varphi[x^2]^2$, we deduce by  Theorem \ref{tho:M-CLT} that if the $x_i$'s are monotonically independent and identically distributed variables with mean $0$, variance $1$ and finite fourth moment $\varphi[x_i^4]$, then 
\begin{equation}
    \label{eq:ineq4}
L(\mu_{X_n}, \mu_{A_n}) \leq K \Big(\frac{\varphi[x_i^4]}{n}\Big)^{1/14},
\end{equation}
for some constant $K>0$. In order to connect the above bound to a fourth moment theorem we will use the monotone cumulants. It was shown by Hasebe and Saigo \cite{hasebesaigo2011} that the sequence of monotone cumulants $\{h_n\}_{n\geq1}
$ satisfies  the property that for any $m\in \mathbb{N}$:  $h_n(\sum_{i=1}^mx_i,\dots,\sum_{i=1}^mx_i)=mh_n(x_1, \dots, x_1),$ whenever $\{x_i\}^m_{i=1}$ is a collection of identically distributed monotone independent variables. The first four cumulants are written in terms of the first four moments as follows:
\begin{equation}\label{monotone1}
\begin{split}
h_1(x)&=\varphi[x],\\
h_2(x,x)&=\varphi[x^2]-\varphi[x]^2,\\
h_3(x,x,x)&=\varphi[x^3]-\frac{5}{2}\varphi[x^2]\varphi[x]+\frac{3}{2}\varphi[x]^3,\\
h_4(x,x,x,x)&=\varphi[x^4]-\frac{3}{2}\varphi[x^2]^2-3\varphi[x^3]\varphi[x]+\frac{37}{6}\varphi[x^2]\varphi[x]^2-\frac{8}{3}\varphi[x]^4. 
\end{split}
\end{equation}

Note that if we normalize so that $\varphi[x]=0$ and $\varphi[x^2]=1$, then  $h_2(x,x)=\varphi[x^2]=1$ and $h_4(x,x,x,x)=\varphi[x^4]- \frac{3}{2}\varphi[x^2]^2=\varphi[x^4]-1.5$. Having this in hand, we now state our fourth moment theorem for the scalar-valued monotone case, which is completely new.  The key point is that now we have a bound in terms of the second and fourth moment, which was not the case in previous works, see \cite[Theorem 1.1] {arizmendi2021berry}.

\begin{theorem}
Let $Y$ be a monotone infinitely divisible random variable with mean $0$ and variance $1$ and $A$ be a standard arcsine element. Then $L(\mu_{Y},\mu_A)\leq K  (\varphi[Y^4]-1.5)^{1/14}$, 
for some $K>0$. 
\end{theorem}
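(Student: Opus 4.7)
The plan is to use monotone infinite divisibility to realize $Y$ as a normalized sum of iid monotone independent variables, apply the Berry-Esseen bound of Theorem \ref{tho:M-CLT}, and then translate $\varphi[x_1^4]$ back to $\varphi[Y^4]$ via the monotone cumulant formulas.

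For each $n \geq 1$, since $\mu_Y \in ID(\rhd)$, we can realize $Y \stackrel{d}{=} y_1 + \cdots + y_n$ with $y_1, \ldots, y_n$ identically distributed and monotone independent in the order $y_1 \prec \cdots \prec y_n$; each $y_i$ is then centered with variance $1/n$. Setting $x_i := \sqrt{n}\, y_i$ produces centered variance-$1$ monotone independent copies with $Y \stackrel{d}{=} \tfrac{1}{\sqrt n}(x_1+\cdots+x_n) = X_n$ in the notation of Theorem \ref{tho:M-CLT}. Applying that theorem in the scalar case yields
\[
L(\mu_Y, \mu_{A_n}) \leq c\bigl(\varphi[x_1^4] + \tfrac{3}{2}\bigr)^{1/14} n^{-1/14}.
\]
Because the variance maps $\eta_j$ all equal $1$ and the standard arcsine law is $\rhd$-stable, the element $A_n$ is nothing but the standard arcsine $A$, independent of $n$. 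Using $\varphi[x_1^4] \geq \varphi[x_1^2]^2 = 1$ to absorb the $3/2$ into the constant, this collapses to the cleaner form $L(\mu_Y, \mu_A) \leq K_0 (\varphi[x_1^4]/n)^{1/14}$ already recorded in \eqref{eq:ineq4}.

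Next, I would convert $\varphi[x_1^4]/n$ into a function of $\varphi[Y^4]$ via monotone cumulants. By the Hasebe-Saigo additivity formula $h_k(z_1 + \cdots + z_n) = n\, h_k(z_1)$ for iid monotone independent sums, combined with the homogeneity $h_k(\lambda z) = \lambda^k h_k(z)$, one obtains $h_4(x_1) = n\, h_4(Y)$. From the expansion \eqref{monotone1} applied to the centered variance-$1$ variables $x_1$ and $Y$, $h_4(x_1) = \varphi[x_1^4] - 3/2$ and $h_4(Y) = \varphi[Y^4] - 3/2$, whence
\[
\frac{\varphi[x_1^4]}{n} \,=\, \varphi[Y^4] - \frac{3}{2} + \frac{3}{2n}.
\]
Substituting into the Berry-Esseen bound and letting $n \to \infty$ yields $L(\mu_Y, \mu_A) \leq K (\varphi[Y^4] - 3/2)^{1/14}$; the nonnegativity $\varphi[Y^4] \geq 3/2$ of the expression inside the parenthesis comes for free by passing to the limit in $\varphi[x_1^4] \geq 0$.

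The one delicate step is the identification $A_n = A$: one must verify that the $\rhd$-stability of the arcsine family really makes the dilation $\mathrm{dil}_{n^{-1/2}}(\nu_1 \rhd \cdots \rhd \nu_1)$ collapse to a single standard arcsine law, independent of $n$. This is the only distributional input needed beyond Theorem \ref{tho:M-CLT} and the cumulant identities \eqref{monotone1}; everything else is bookkeeping.
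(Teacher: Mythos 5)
Your proof is correct and follows essentially the same route as the paper: decompose $Y$ via monotone infinite divisibility into an iid normalized sum, apply the Berry--Esseen bound of Theorem~\ref{tho:M-CLT} (in the form~\eqref{eq:ineq4}), use the $\rhd$-stability of the standard arcsine to identify $A_n$ with $A$, translate $\varphi[x_1^4]/n$ to $h_4(Y)+\tfrac{3}{2n}$ via the Hasebe--Saigo additivity of monotone cumulants and the formula $h_4(x)=\varphi[x^4]-\tfrac{3}{2}$ for centered variance-one variables, and let $n\to\infty$. Your closing remark that $\varphi[Y^4]\geq \tfrac{3}{2}$ follows from $\varphi[x_1^4]\geq 0$ in the limit is a small sanity check the paper leaves implicit; otherwise the arguments coincide.
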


\begin{proof}
Since $Y$ is infinitely divisible, then for each $n\in \mathbb{N}$, we may write $Y$ as a sum of $n$ identically monotone independent random variables $$Y=y_1+\dots+ y_n=\frac{1}{\sqrt{n}}(\sqrt{n}y_1+\dots +\sqrt{n}y_n).$$ Noting that  $\varphi[(\sqrt{n}y_i)^2]=h_2(\sqrt{n}y_i,\sqrt{n}y_i)=n h_2(y_i,y_i)=h_2(Y,Y)=1$ and  $h_4(\sqrt{n}y_i)=n^2h_4(y_i,y_i,y_i,y_i)=nh_4(Y,Y,Y,Y), $ then $$\varphi[(\sqrt{n}y_i)^4]=h_4(\sqrt{n}y_i,\sqrt{n}y_i,\sqrt{n}y_i,\sqrt{n}y_i)+1.5=nh_4(Y,Y,Y,Y)+1.5\, .$$

For a standard arcsine element $A$, we may assume, without loss of generality,  that $A=A_n=\frac{1}{\sqrt{n}}(x_1+\dots+x_n)$ where the $x_i$'s are also standard arcsine variables. With this observation, Theorem \ref{tho:M-CLT} holds and hence the bound \eqref{eq:ineq4} yields that
\begin{eqnarray*} L(\mu_Y,\mu_A)&=&L(\mu_Y,\mu_{A_{n}})
\leq K\left(\frac{\varphi[(\sqrt{n}y_i)^4]}{n}\right)^{1/14}\\
&=& K \left(\frac{h_4(\sqrt{n}y_i,\sqrt{n}y_i,\sqrt{n}y_i,\sqrt{n}y_i)+1.5}{n}\right)^{1/14}
=K \left(h_4(Y,Y,Y,Y)+\frac{1.5}{n}\right)^{1/14}.
\end{eqnarray*}
Since $n$ is arbitrary, we let it go to infinity to obtain as desired
$$L(\mu_A,\mu_Y)\leq K(h_4(Y,Y,Y,Y))^{1/14}=K(\varphi[Y^4]-1.5)^{1/14}.$$ 
\end{proof}

In the case of operator-valued monotone independent variables, we can give a bound for the difference of the Cauchy transforms in terms of the fourth $\B$-valued cumulants. 

For this aim, we need analogue operator-valued formulas to those in \eqref{monotone1}. As before, when we restrict to the centered case, i.e. $E[x]=0$, the formulas simplify and we get that
$h_1^{\B}(x)=E(x)=0$, $h_2^{\B}(xb,x)=E[xbx]=\eta(b)$, $h_3^{\B}(xb_1,xb_2,x)=E[xb_1xb_2x]$ and $$h_4^{\B}(xb_1,xb_2,xb_3,x)=E[xb_1xb_2xb_3x]-E[xb_1x]b_2E[xb_3x]- \frac{1}{2} E\left[xb_1E[xb_2x]b_3x\right].$$

\begin{theorem}\label{theo:OV-Monotone-4th-moment-theorem}
Let $(\A,E,\B)$ be a $C^\ast$-probability space and $Y$ be monotone $n$-divisible element over $\B$ with $E[Y]=0$ and $E[YbY]=\eta(b)$.  Then, for  any any $k \in \mathbb{N}$ and $\Zb \in \bH^+(M_k(\B))$,
 $$
\big\| E[G^{M_k(\B)}_{1_k \otimes Y}(\Zb)] - E[G^{M_k(\B)}_{1_k \otimes A}(\Zb)] \big\|
\leq k \|\Imm(\Zb)^{-1}\|^4\sqrt{\|\eta(1)\|}\sqrt{\sup \|h_4^{\B}(Yb,Y,Yb',Y)\|}, $$
where $A$ is the $\B$-valued arcsine element with $E[A]=0$ and $E[ A b  A ]=\eta(b)$ and the supremum is taken over all $b,b' \in \B$ with $ \|b\|
=\|b'\|=1$. 
\end{theorem}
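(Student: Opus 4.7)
The plan is to exploit the monotone $n$-divisibility of $Y$ for arbitrary $n$, apply Theorem \ref{theo:monotone-monotone}, and then let $n\to\infty$. I first treat the case $k=1$. Since $Y$ is monotone $n$-divisible, for each $n$ one may write $Y=y_1+\dots+y_n$ with monotone i.i.d.\ elements $y_i$ (in a possibly enlarged OV probability space), satisfying $E[y_i]=0$; by additivity of the second monotone cumulant under i.i.d.\ monotone sums, each has variance $\eta_{y_i}=\eta/n$. Next I enlarge the space further and place monotone independent $\B$-valued arcsine elements $a_1,\dots,a_n$ with variances $\eta/n$ to the right of the $y_j$'s, so that $\A_{y_1}\prec\dots\prec\A_{y_n}\prec\A_{a_1}\prec\dots\prec\A_{a_n}$. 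Because the OV monotone cumulants of arcsines vanish above order two, the additivity property for i.i.d.\ monotone sums ensures that $A':=a_1+\dots+a_n$ has the same OV monotone cumulants (and hence the same $\B$-distribution) as the arcsine $A$ with variance $\eta$, so that $E[G_{A'}(\Zb)]=E[G_A(\Zb)]$.

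I then apply Theorem \ref{theo:monotone-monotone} to $\{y_i\}$ and $\{a_i\}$, which satisfy Assumption \ref{A:general}, to get
\[
\|E[G_Y(\Zb)]-E[G_A(\Zb)]\|\leq \|\Imm(\Zb)^{-1}\|^4\sqrt{\alpha_2(y)}\bigl(\sqrt{\alpha_4(y)}+\sqrt{\alpha_4(a)}\bigr)\,n.
\]
From $E[y_i^2]=\eta(1)/n$ we get $\alpha_2(y)=\|\eta(1)\|/n$. Using the moment-cumulant formula recorded before the statement,
\[
E[y_ib^*y_i^2by_i]=h_4^\B(y_ib^*,y_i,y_ib,y_i)+E[y_ib^*y_i]\,b\,E[y_ib y_i]+\tfrac12\,E\bigl[y_ib^*E[y_i^2]by_i\bigr],
\]
and invoking the OV version of the Hasebe--Saigo additivity formula, one obtains $h_4^\B(y_ib^*,y_i,y_ib,y_i)=\tfrac{1}{n}\,h_4^\B(Yb^*,Y,Yb,Y)$, while the two remaining terms are $O(1/n^2)$ (polynomial in $\|\eta\|$ and $\|\eta(1)\|$). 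For the arcsine $a_i$, $h_4^\B\equiv 0$, leaving only an $O(1/n^2)$ correction. Plugging back, the factor $n$ outside absorbs a $1/n$ from $\sqrt{\alpha_2(y)}$ together with the leading $1/\sqrt{n}$ from $\sqrt{\alpha_4(y)}$; letting $n\to\infty$ yields
\[
\|E[G_Y(\Zb)]-E[G_A(\Zb)]\|\leq \|\Imm(\Zb)^{-1}\|^4 \sqrt{\|\eta(1)\|}\,\sqrt{\sup\|h_4^\B(Yb^*,Y,Yb,Y)\|},
\]
which is the $k=1$ assertion.

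For general $k\in\mathbb{N}$, I apply the $k=1$ statement inside the amplified OV probability space $(M_k(\A),\mathrm{id}_k\otimes E,M_k(\B))$. By Proposition \ref{prop:lift-monotoneIND-to-matrices}, the element $1_k\otimes Y$ is monotone $n$-divisible over $M_k(\B)$, and a direct moment computation shows that $1_k\otimes A$ is an $M_k(\B)$-valued arcsine with variance $\mathrm{id}_k\otimes\eta$. It then remains to translate the $M_k(\B)$-level bound back to the $\B$-level quantities appearing in the statement. This is where the factor $k$ arises, exactly as in the argument of Theorem \ref{theo:OV_Bernoulli_comparison}, through the estimate $\|\mathrm{id}_k\otimes\Psi\|\leq k\,\|\Psi\|$ for $\B$-linear completely bounded maps $\Psi$ (cf.\ Exercise 3.10 of Paulsen), applied to both the variance map and the fourth cumulant multilinear map.

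The main obstacle is the careful use of the OV monotone cumulant additivity for i.i.d.\ sums with $\B$-valued arguments, in the form $h_n^\B(\tilde{A}b_1,\dots,\tilde{A})=n\,h_n^\B(a_1b_1,\dots,a_1)$ (and analogously for the $y_i$'s); rigorously justifying this in the operator-valued monotone setting is the delicate point, after which the rest of the argument is a routine bookkeeping of the $1/n$ corrections and of the amplification factor.
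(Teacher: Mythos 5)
Your $k=1$ argument is essentially the paper's, up to a harmless rescaling: the paper writes $Y=\frac{1}{\sqrt n}(\sqrt n\,y_1+\dots+\sqrt n\,y_n)$, takes $A=A_n=\frac{1}{\sqrt n}(x_1+\dots+x_n)$ with the $x_i$ arcsine of variance $\eta$, and applies Theorem~\ref{tho:M-CLT} to the rescaled family; you work directly with the $y_i$ and $a_i=x_i/\sqrt n$ and apply Theorem~\ref{theo:monotone-monotone}. The computation of $\alpha_2$ and $\alpha_4$ via the moment--cumulant formula, the invocation of the operator-valued Hasebe--Saigo additivity $h_4^\B(Yb_1,Y,Yb_2,Y)=n\,h_4^\B(y_1b_1,y_1,y_1b_2,y_1)$, and the passage $n\to\infty$ are the same in substance; you are also right that the paper uses this additivity without supplying an OV proof.

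Where the proposal goes off is the amplification to general $k$. You propose extracting the factor $k$ by applying the estimate $\|\id_k\otimes\Psi\|\le k\|\Psi\|$ (Paulsen, Exercise~3.10) to \emph{both} the variance map and the fourth cumulant. But this bookkeeping does not close: applied to the variance it gives $\|(\id_k\otimes\eta)(1_k)\|\le k\|\eta(1)\|$, contributing $\sqrt k$ under the square root; and the map $b\mapsto h_4^\B(Yb^*,Y,Yb,Y)$ is \emph{not} linear in $b$, so Exercise~3.10 (which concerns linear cb maps) does not directly bound $\sup_{\|B\|=1}\|h_4^{M_k(\B)}\|$ by $k\sup_{\|b\|=\|b'\|=1}\|h_4^\B\|$ — the bound the paper establishes, by expanding blockwise as $\sum_{i,j,\ell}E_{ij}\otimes h_4^\B(YB_{i\ell}^*,Y,YB_{\ell j},Y)$ and then applying the row-norm form of Exercise~3.10(i), is $k^2\sup\|h_4^\B\|$, contributing a full $k$ under the square root. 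Carrying out your route with the bounds that actually hold gives $\sqrt k\cdot k=k^{3/2}$, not $k$. The paper instead uses that $(\id_k\otimes\eta)(1_k)=I_k\otimes\eta(1)$, so $\|(\id_k\otimes\eta)(1_k)\|=\|\eta(1)\|$ with \emph{no} factor of $k$, and combines this with the $k^2$ bound on the amplified fourth cumulant; the two square roots then produce exactly the factor $k$ in the statement. You should replace your Paulsen-based amplification argument with this one.
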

As the above bound holds on the level of the fully matricial extensions of Cauchy transforms, it is sufficient to capture convergence in distribution over $\B$ which yields the following operator-valued monotone fourth moment theorem:
\begin{corollary}
Let $(\A,E,\B)$ be a $C^\ast$-probability space and $(Y_N)_{N \in \mathbb{N}}$ a family of monotone $n$-divisible elements over $\B$ such that $E[Y_N]=0$, $E[Y_NbY_N]=\eta(b)$ and $\sup_{N\in\bN} \|Y_N\| < \infty$. Provided that $\sup \|h_4^{\B}(Y_Nb,Y_N,Y_Nb',Y_N)\|\rightarrow 0 $ as $N\rightarrow \infty$ where the supremum is taken over all $b,b' \in \B$ with $ \|b\|
=\|b'\|=1$ , then $Y_N$ converges in distribution over $\B$ to a $\B$-valued centered arcsine element $A$ with variance $\eta$. 

If, in addition, $(\A,\varphi)$ is a $C^\ast$-probability space and $\varphi=\varphi \circ E$, then the distribution of $Y_N$ with respect to $\varphi$, converges to  that of $A$.
\end{corollary}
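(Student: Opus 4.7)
The plan is to mirror the scalar fourth-moment argument proved just above, executed now at the matricial operator-valued level. Since $Y$ is monotone $n$-divisible over $\B$, write $Y = y_1 + \cdots + y_n$ with identically distributed $y_i$ satisfying $\A_{y_1} \prec \cdots \prec \A_{y_n}$ over $\B$. The operator-valued analogue of the Hasebe--Saigo additivity of monotone cumulants (cf.\ \cite{hasebesaigo2011}) gives $h_m^\B(y_i b_1, \ldots, y_i b_{m-1}, y_i) = \tfrac{1}{n} h_m^\B(Y b_1, \ldots, Y b_{m-1}, Y)$ for every $m \geq 2$ and $b_1, \ldots, b_{m-1} \in \B$; in particular $\eta_{y_i} = \tfrac{1}{n}\eta$. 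On a possibly enlarged $C^\ast$-space we adjoin $\B$-valued arcsine elements $a_1, \ldots, a_n$, each of variance $\tfrac{1}{n}\eta$, satisfying $\A_{y_1} \prec \cdots \prec \A_{y_n} \prec \A_{a_1} \prec \cdots \prec \A_{a_n}$. By the same additivity applied to the iid arcsine family, the sum $A := a_1 + \cdots + a_n$ is a $\B$-valued arcsine element with $E[A b A] = \eta(b)$, matching the target. The two families $\{y_i\}$ and $\{a_i\}$ then satisfy Assumption \ref{A:general}.

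Lifting to $(M_k(\A), \mathrm{id}_k \otimes E, M_k(\B))$, Proposition \ref{prop:lift-monotoneIND-to-matrices} preserves the monotone ordering, so Theorem \ref{theo:monotone-monotone} applied to $\{1_k \otimes y_i\}$ and $\{1_k \otimes a_i\}$ gives, for every $\Zb \in \bH^+(M_k(\B))$,
\[
\bigl\|(\mathrm{id}_k \otimes E)[G^{M_k(\B)}_{1_k \otimes Y}(\Zb)] - (\mathrm{id}_k \otimes E)[G^{M_k(\B)}_{1_k \otimes A}(\Zb)]\bigr\| \leq n \|\Imm(\Zb)^{-1}\|^4 \sqrt{\alpha_2^{(k)}} \bigl(\sqrt{\alpha_4^{(k)}(y)} + \sqrt{\alpha_4^{(k)}(a)}\bigr),
\]
where $\alpha_2^{(k)}$ and $\alpha_4^{(k)}(\cdot)$ denote the Assumption \ref{A:general} quantities computed in the amplified space. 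A direct computation gives $\alpha_2^{(k)} = \|E[y_i^2]\| = \tfrac{1}{n}\|\eta(1)\|$, independently of $k$, so $n \sqrt{\alpha_2^{(k)}} = \sqrt{n \|\eta(1)\|}$.

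For the fourth-moment quantities, the operator-valued monotone moment-cumulant formula applied to the centered $y_i$ only picks up the three non-crossing partitions of $[4]$ without singletons (namely $\{1,2,3,4\}$; $\{1,2\}\{3,4\}$; and the nested pair $\{1,4\}\{2,3\}$ with $\tau$-factor $2$), yielding
\[
E[y_i b_1 y_i b_2 y_i b_3 y_i] = h_4^\B(y_i b_1, y_i b_2, y_i b_3, y_i) + \eta_{y_i}(b_1) b_2 \eta_{y_i}(b_3) + \tfrac{1}{2}\eta_{y_i}\bigl(b_1 \eta_{y_i}(b_2) b_3\bigr),
\]
with the analogous identity for the $a_i$ but without the $h_4$-term. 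Substituting the scalings $\eta_{y_i} = \eta_{a_i} = \tfrac{1}{n}\eta$ and $h_4^\B(y_i b_1, \ldots, y_i) = \tfrac{1}{n} h_4^\B(Y b_1, \ldots, Y)$, then expanding the amplified quantity $(\mathrm{id}_k \otimes E)[(1_k \otimes y_i) B^\ast (1_k \otimes y_i^2) B (1_k \otimes y_i)]$ entrywise as $\bigl(\sum_l E[y_i B_{lr}^\ast y_i^2 B_{ls} y_i]\bigr)_{r,s}$ and using the Paulsen-type bound $\|\mathrm{id}_k \otimes f\| \leq k \|f\|$ from \cite[Exercise 3.10]{Paulsen} (as already exploited in the proof of Theorem \ref{theo:OV_Bernoulli_comparison}), one obtains
\[
\alpha_4^{(k)}(y) \leq \tfrac{k}{n} \sup_{\|b\| = \|b'\| = 1} \|h_4^\B(Y b, Y, Y b', Y)\| + O(k/n^2), \qquad \alpha_4^{(k)}(a) = O(k/n^2).
\]
Combining and letting $n \to \infty$ kills the $O(1/\sqrt{n})$ corrections and leaves precisely the claimed bound $k \|\Imm(\Zb)^{-1}\|^4 \sqrt{\|\eta(1)\|} \sqrt{\sup \|h_4^\B(Y b, Y, Y b', Y)\|}$.

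The main technical point will be the matrix-level estimation of $\alpha_4^{(k)}$: expanding the $(r,s)$-entry of $(\mathrm{id}_k \otimes E)[(1_k \otimes y_i) B^\ast (1_k \otimes y_i^2) B (1_k \otimes y_i)]$, substituting the cumulant-moment expansion, and extracting a single power of $k$ via Paulsen (rather than the naive $k^2$ arising from a crude entrywise norm bound), while simultaneously verifying that the $\eta$-residual terms genuinely decay as $n^{-2}$ so that they disappear in the limit. Once these bookkeeping estimates are secured, the passage $n \to \infty$ is routine.
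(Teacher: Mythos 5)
Your proposal re-derives, in careful technical detail, the quantitative bound of Theorem~\ref{theo:OV-Monotone-4th-moment-theorem} — that is, the \emph{preceding} theorem, for which the paper already supplies a proof — rather than proving the corollary itself. That preliminary work is essentially correct: the expansion
\[
E[y_i b_1 y_i b_2 y_i b_3 y_i] = h_4^\B(y_i b_1, y_i b_2, y_i b_3, y_i) + \eta_{y_i}(b_1)\, b_2\, \eta_{y_i}(b_3) + \tfrac{1}{2}\eta_{y_i}\bigl(b_1 \eta_{y_i}(b_2) b_3\bigr),
\]
the Hasebe--Saigo additivity of monotone cumulants, the amplification to $M_k(\B)$ via Proposition~\ref{prop:lift-monotoneIND-to-matrices}, and the Paulsen-type control of $\|\mathrm{id}_k \otimes f\|$ are all the same ingredients the paper uses, and your passage $n\to\infty$ killing the $O(n^{-1/2})$ terms matches the paper's argument up to a harmless rescaling ($a_i$ of variance $\tfrac{1}{n}\eta$ versus $\tfrac{1}{\sqrt{n}}x_i$). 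But the corollary is not a bound: it is a pair of convergence statements about a sequence $(Y_N)_N$, and your argument terminates at the bound without ever touching them.

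The missing step — which is the only genuine content of the corollary — is the conversion from the Cauchy-transform estimate to convergence in distribution. Concretely: apply Theorem~\ref{theo:OV-Monotone-4th-moment-theorem} to each $Y_N$; since the variance $\eta$ is the same for all $N$, the target $A$ is fixed, and the hypothesis $\sup\|h_4^\B(Y_N b, Y_N, Y_N b', Y_N)\|\to 0$ makes the right-hand side vanish for every $k\in\bN$ and every $\Zb\in\bH^+(M_k(\B))$. Thus the fully matricial Cauchy transforms $\G^{M_k(\B)}_{1_k \otimes Y_N}$ converge to those of $A$. As the preliminaries record, the family $\{\G^{M_k(\B)}_{1_k\otimes x}\}_k$ determines the $\B$-valued distribution of a self-adjoint $x$; combined with $\sup_N\|Y_N\|<\infty$, which makes the moment series at infinity uniformly convergent, this yields convergence of all $\B$-valued moments, i.e.\ convergence in distribution over $\B$. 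The scalar clause then follows by composing with $\varphi=\varphi\circ E$: $\G_{Y_N}(z)=\varphi[E[G_{Y_N}(z)]]\to\varphi[E[G_A(z)]]=\G_A(z)$ for every $z\in\C^+$, and pointwise convergence of scalar Cauchy transforms on the upper half-plane gives weak convergence of the spectral distributions. None of this appears in your write-up, so both conclusions of the corollary are left unproved.
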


\begin{proof}[Proof of Theorem \ref{theo:OV-Monotone-4th-moment-theorem}]
Let $Y$ be monotone $n$-divisible over $\B$. We write $Y$ as a sum of $n$ identically distributed monotone independent random variables $$Y=y_1+\dots + y_n=\frac{1}{\sqrt{n}}(\sqrt{n}y_1+\dots +\sqrt{n}y_n).$$ 
For a $\B$-valued arcsine element $A$, we may assume that $A=A_n=\frac{1}{\sqrt{n}}(x_1+\cdots+x_n)$ where the $x_i's$ are themselves $\B$-valued arcsine variables with mean zero and variance map $\eta$.
With this observation, Theorem \ref{tho:M-CLT} holds and yields following bound: for any $n \in \mathbb{N}$ and any $\Zb \in \bH^+(\B)$,
\begin{align*}
\big\| E[G_{Y}(\Zb)] - E[G_{A}(\Zb)] \big\|
\leq \frac{1}{\sqrt{n}}  \|\Imm(\Zb)^{-1}\|^4 B_n(y) ,
\end{align*}
where $$B_n(y)=\sqrt{\alpha_2(\sqrt{n}y_1)}\left(\sqrt{\alpha_4(\sqrt{n}y_1)}+\sqrt{\frac{3}{2}\alpha_2(\sqrt{n}y_1)^2 }\right). $$
To control the above estimate, we note that 
$$
\alpha_2(\sqrt{n}y_1)=\|E[\sqrt{n}y_11\sqrt{n}y_1]\| = \|h_2^{\B}(\sqrt{n}y_11,\sqrt{n}y_1)\|=\|h_2^{\B}(Y1,Y)\|=\|\eta(1)\|.
$$
Moreover, we observe that 
\begin{align*}
E[\sqrt{n}y_1b^* (\sqrt{n}y_1)^2b\sqrt{n}y_1] &= h_4^{\B}(\sqrt{n}y_1b^*,\sqrt{n}y_1,\sqrt{n}y_1b,\sqrt{n}y_1)+\eta(b^*)\eta(b)+\frac{1}{2}\eta(b^*\eta(1)b) \\
&= nh_4^{\B}(Yb^*,Y,Yb,Y)+\eta(b^*)\eta(b)+\frac{1}{2}\eta(b^*\eta(1)b),
\end{align*}
which yields that
\begin{eqnarray*}
\alpha_4(\sqrt{n}y_1)&=&\sup\limits_{b\in\B,\ \|b\|=1}\|E[\sqrt{n}y_1b^*(\sqrt{n}y_1)^2b\sqrt{n}y_1]\| \\
&\leq& \sup\limits_{b\in\B,\ \|b\|=1} n \|h_4^{\B}(Yb^*,Y,Yb,Y)\|+\|\eta(1)\|^2 + \frac{1}{2}\|\eta(1)\|. 
\end{eqnarray*}
Hence, we  for any $n \in \mathbb{N}$
\begin{align*}
\frac{1}{\sqrt{n}}B_n(y)\! \leq \! \sqrt{\|\eta(1)\|} \!\left(\! \sqrt{\sup\limits_{b\in\B,\ \|b\|=1}\! \|h_4^{\B}(Yb^*,Y,Yb,Y)\|+\frac{\|\eta(1)\|^2}{n} + \frac{\|\eta(1)\|}{2n}}+ \!\sqrt{\frac{3\|\eta(1)\|^2}{2n}}\right).  
\end{align*}
 Finally letting $n\to \infty$ and putting the above bounds together, we  conclude that
\begin{equation}\label{eq:OVmonotone4thmomenttheorem}
    \big\| E[G_{Y}(\Zb)] - E[G_{A}(\Zb)] \big\|
\leq  \|\Imm(\Zb)^{-1}\|^4\sqrt{\|\eta(1)\|}\sqrt{\sup\limits_{b\in\B,\ \|b\|=1} \|h_4^{\B}(Yb^*,Y,Yb,Y)\|}.
\end{equation}

This proves the assertion for $k=1$. To prove it for general $k \in \mathbb{N}$, we note first that, by Proposition \ref{prop:lift-monotoneIND-to-matrices}, monotone independence is preserved under amplification with the identity matrix, we prove the assertion for general $k \in \mathbb{N}$, by applying \eqref{eq:OVmonotone4thmomenttheorem} for $1_k\otimes Y$ and $1_k\otimes A$ in the framework of the operator-valued $C^*$-probability space $(M_k(\A),id_k\otimes E,M_k(\B))$. we first note that by \cite[Lemma 5.4]{BannaMaiBerry}, we have that $\|(id_k \otimes \eta)(1)\| \leq \|\eta(1)\| $. 
Moreover, for any $B \in M_k(\B)$, one can easily check that
\begin{align*}
h_4^{M_k(\B)}\big((1_k \otimes Y)B^*,(1_k \otimes Y),(1_k \otimes Y)B,(1_k \otimes Y)\big) = \sum_{i,j,\ell=1}^k E_{ij} \otimes h_4^{\B}(YB_{i\ell}^*,Y,YB_{\ell j},Y),  
\end{align*}
and hence, by \cite[Exercise 3.10 (i)]{Paulsen}, we get 
\begin{align*}
\Big\|& h_4^{M_k(\B)}\big((1_k \otimes Y)B^*,(1_k \otimes Y),(1_k \otimes Y)B,(1_k \otimes Y)\big) \Big\| \\ & \leq \Big( \sum_{i,j=1}^k \big\| \sum_{\ell=1}^k h_4^{\B}(YB_{i\ell}^*,Y,YB_{\ell j},Y) \big\|^2\Big)^{1/2}
 \leq k^2 \|B\|^2 \sup\limits_{\substack{b\in\B,\ \|b\|=1 \\ b'\in\B,\ \|b'\|=1}} \big\|  h_4^{\B}(Yb,Y,Yb',Y) \big\|.
\end{align*}
From this, we infer that 
\begin{align*}
\sup_{B\in M_k(\B),\ \|B\|=1}&\Big\| h_4^{M_k(\B)}\big((1_k \otimes Y)B^*,(1_k \otimes Y),(1_k \otimes Y)B,(1_k \otimes Y)\big) \Big\| \\ & \leq \Big( \sum_{i,j=1}^k \big\| \sum_{\ell=1}^k h_4^{\B}(YB_{i\ell}^*,Y,YB_{\ell j},Y) \big\|^2\Big)^{1/2}
 \leq k^2 \sup\limits_{\substack{b\in\B,\ \|b\|=1 \\ b'\in\B,\ \|b'\|=1}} \big\|  h_4^{\B}(Yb,Y,Yb',Y) \big\|.
\end{align*}
Putting the above terms together, we prove the assertion for any $k \in \mathbb{N}$.

\end{proof}

\subsection{Infinitesimal CLTs}\label{section:Inf-CLT}

This section is devoted to studying operator-valued infinitesimal central limit theorems, that were introduced in \cite{perales2021operator}, see Theorem \ref{OVI-CLT}. We provide in this paper the first quantitative results in this setting that are on the level of operator-valued infinitesimal Cauchy transforms. 

Suppose that $(\A,E,E',\B)$ is an OV $C^*$-infinitesimal probability space. Let $x:=\{x_1,\dots,x_n\}$ be a family of self-adjoint elements in $\A$ such that $E[x_j]=E'[x_j]=0$ for all $j$ and set 
$$
X_n:=\frac{1}{\sqrt{n}}\sum_{j=1}^N x_j. 
$$
Given the setting we consider, we will denote by $S_n$ a $\B$-valued infinitesimal semicircle, Bernoulli or generalized arcsine element (see \cite[Section 3.3]{tseng2021thesis}) with the infinitesimal variance $(\eta_n,\eta_n')$ given by
$$
\eta_n(b) = \frac{1}{n} \sum_{j=1}^n E[x_j b x_j]\qquad  \text{ and } \qquad \eta'_n(b) = \frac{1}{n} \sum_{j=1}^n E'[x_j b x_j].
$$

We assume that our OVI probability space is rich enough to contain a family $y:=\{y_1,\dots,y_n\}$ of independent $\B$-valued infinitesimal semicircular/Bernoulli/arcsine elements, that is itself  independent of $x$ and is such that $E[y_iby_i]=E[x_ibx_i] $ and $E'[y_iby_i]=E'[x_ibx_i] $ for any $i=1,\dots,n$. In the monotone case, the independence of $y$ from $x$ is assumed to hold in the sense that $\A_{x_1} \pprec \dots \pprec \A_{x_n}\pprec \A_{y_1} \pprec \dots \pprec \A_{y_n}$ over $\B$.

We state now the first quantitative estimates in the infinitesimal setting that follow directly from Theorem \ref{eq:theo-Inf} and that provide analogue estimates to those in Section 4. 

\begin{theorem}
Let $x:=\{x_1,\dots , x_n\}$ be a family of self-adjoint elements in $\A$ that are infinitesimally free/Boolean/monotone independent with amalgamation over $\B$ and that are such that $E[x_j]=E'[x_j]=0$. Then, for any $\Zb \in \bH^+(\B)$,
$$
\big\|E'\big[G_{X_n}  (\Zb)\big]  - E' \big[G_{S_n}(\Zb)\big]\big\|\leq \frac{2}{\sqrt{n}} \|\Imm(\Zb)^{-1}\|^4  \left( \max_{1\leq i\leq N}\|x_i\|^3  + 8\max_{1\leq i\leq N}\|E[x_i^2]\|^{\frac{3}{2}} \right),
$$
where $S_n$ is a $\B$-valued infinitesimal semicircular/Bernoulli/generalized arcsine element with the infinitesimal variance $(\eta_n,\eta_n')$. The bounds can be slightly improved in the Boolean case by removing the factor $8$ on the right-hand side. 
\end{theorem}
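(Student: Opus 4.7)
The strategy is to reduce the CLT estimate to Theorem \ref{eq:theo-Inf} by introducing an auxiliary family with matching first two moments in the infinitesimal sense. Concretely, I enlarge the OVI probability space if necessary so that it contains a family $y=\{y_1,\dots,y_n\}$ of $\B$-valued infinitesimal semicircular/Bernoulli/arcsine elements (matching the case under consideration) satisfying $E[y_i b y_i]=E[x_i b x_i]$ and $E'[y_i b y_i]=E'[x_i b x_i]$ for every $i$ and $b\in\B$, and positioned so that Assumption \ref{A:Infgeneral} holds for the pair $(x,y)$. In the free and Boolean cases this is standard (take infinitesimally free/Boolean copies independent of $x$); in the monotone case I arrange the linear order so that $\A_{x_1}\pprec\cdots\pprec\A_{x_n}\pprec\A_{y_1}\pprec\cdots\pprec\A_{y_n}$ over $\B$, exactly as was done in the proof of Theorem \ref{theo:monotone-monotone}.

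Next, I apply Theorem \ref{eq:theo-Inf} to the rescaled families $\{x_i/\sqrt{n}\}_{i=1}^n$ and $\{y_i/\sqrt{n}\}_{i=1}^n$, which still satisfy Assumption \ref{A:Infgeneral}. The first sum equals $X_n$ by definition. The second sum is $\frac{1}{\sqrt{n}}\sum_{i=1}^n y_i$, and I claim this equals $S_n$ (in distribution and in infinitesimal distribution over $\B$). For the free and Boolean cases, this follows by iterating Lemma \ref{Lemma.Is.elt}: the sum of infinitesimally free (resp.\ Boolean) semicircular (resp.\ Bernoulli) elements is again of the same type, with infinitesimal variance equal to the sum of the individual infinitesimal variances, which here equals $(\eta_n,\eta_n')$ after the $1/\sqrt{n}$ normalization. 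For the monotone case, $S_n$ is by definition a $\B$-valued infinitesimal generalized arcsine element with that infinitesimal variance, so the identification is tautological.

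Inserting these two identifications into \eqref{eq:theo-Inf Eqn} gives
\[
\bigl\|E'[G_{X_n}(\Zb)]-E'[G_{S_n}(\Zb)]\bigr\|
\;\leq\; 2n\,\|\Imm(\Zb)^{-1}\|^4\,\Bigl(\max_i\|x_i/\sqrt n\|^3+\max_i\|y_i/\sqrt n\|^3\Bigr)
\;=\;\tfrac{2}{\sqrt n}\,\|\Imm(\Zb)^{-1}\|^4\,\Bigl(\max_i\|x_i\|^3+\max_i\|y_i\|^3\Bigr),
\]
so the only remaining task is to control $\max_i\|y_i\|^3$ by moments of $x_i$.

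For this last step I use the standard operator norm estimates for the limiting distributions. A $\B$-valued semicircular/generalized arcsine element with variance $\eta$ satisfies $\|y\|\leq 2\sqrt{\|\eta(1)\|}$ (its spectrum sits in $[-2\sqrt{\|\eta(1)\|},2\sqrt{\|\eta(1)\|}]$ by the moment formulas through nested pair partitions), while a $\B$-valued Bernoulli element with variance $\eta$ satisfies $\|y\|\leq\sqrt{\|\eta(1)\|}$. Since $E[y_i^2]=E[x_i^2]$ by construction, this gives $\|y_i\|^3\leq 8\|E[x_i^2]\|^{3/2}$ in the semicircular/arcsine setting and $\|y_i\|^3\leq\|E[x_i^2]\|^{3/2}$ in the Boolean setting, which yields the advertised bound and the indicated improvement in the Boolean case. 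The only place requiring care is the monotone construction of $S_n$, since there is no analogue of Lemma \ref{Lemma.Is.elt}: one must appeal directly to the definition of infinitesimal generalized arcsine elements, and verify that the sum $\sum y_i/\sqrt n$ of infinitesimal arcsine elements arranged in a monotone chain does indeed have the claimed pair $(\eta_n,\eta_n')$ as its infinitesimal variance, which follows from the monotone convolution formulas at the level of $\widetilde E$ in the upper triangular space.
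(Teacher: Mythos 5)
Your proposal is correct and follows essentially the same route as the paper: it constructs an auxiliary family of infinitesimal semicircular/Bernoulli/arcsine elements with matching first two moments, applies Theorem \ref{eq:theo-Inf} to the rescaled families, and concludes with the standard operator-norm bounds for the limiting laws ($\|y\|\leq 2\sqrt{\|\eta(1)\|}$ for semicircular and arcsine, $\|y\|=\sqrt{\|\eta(1)\|}$ for Bernoulli). The handling of the monotone case (with $S_n$ a generalized arcsine element realized as the rescaled sum of a monotone chain of arcsine elements, rather than via a sum-closure lemma) also matches the paper.
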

\begin{proof}
To obtain the above estimates, we choose the family $y=\{y_1,\dots,y_n\}$ as above and then apply Theorem \ref{eq:theo-Inf}. We give a complete proof for the free case whereas we only indicate what choices of the family $y$ we do in the Boolean and monotone cases. 

For the free case, we choose $y=\{y_1,\dots,y_n\}$ to be a family of $\B$-valued infinitesimal semicircular elements that are infinitesimally free and that are such that  $E[y_j]=E'[y_j]=0$, $E[x_jbx_j]=E[y_jby_j]$, and $E'[x_jbx_j]=E'[y_jby_j]$ for any $b\in \B$ and  $j\in [n]$. Finally, we set $S_n=\frac{1}{\sqrt{n}}\sum_{j=1}^n y_j$, and note that by Lemma \ref{Lemma.Is.elt}, $S_n$ is a $\B$-valued infinitesimal semicircular element with infinitesimal variance $(\eta_n,\eta'_n)$. Hence, Theorem \ref{eq:theo-Inf} yields
$$
\big\|E'\big[G_{X_n}  (\Zb)\big]  - E' \big[G_{S_n}(\Zb)\big]\big\|\leq \frac{2}{\sqrt{n}} \|\Imm(\Zb)^{-1}\|^4  \left( \max_{1\leq i\leq N}\|x_i\|^3  + \max_{1\leq i\leq N}\|y_i\|^3 \right).
$$
Finally, we note that $\|y_i\|\leq 2\|\eta_{y_i}(1)\|^{1/2}=2\|E[x_i^2]\|^{1/2}$ for each $i=1,\dots,n$ (see \cite[Proposition 6.2.1]{Jekel-notes}) which completes the proof of the free case. 

For the Boolean case, we choose $y=\{y_1,\dots,y_n\}$ to be a family of $\B$-valued infinitesimal Bernoulli elements that are infinitesimally Boolean independent over $\B$ with matching moments as described above. Finally, we note that for any $j \in [n]$, $\|y_j\|=\|\eta_{y_j}(1)\|^{1/2}$ which is the reason why the factor  $8$ can be improved to $1$, see \cite[Proposition 6.2.4]{Jekel-notes}).

Finally, for the monotone case, we choose $y=\{y_1,\dots,y_n\}$ to be a family of $\B$-valued infinitesimal arcsine elements that are infinitesimally monotone independent in the sense that $\A_{x_1}\pprec  \cdots \pprec A_{x_n} \pprec\A_{y_1}\pprec \cdots \pprec A_{y_n}$ over $\B$ with matching moments as described above. Finally, we note that for any $j \in [n]$, $\|y_j\|\leq 2 \|\eta_{y_j}(1)\|^{1/2}$, see \cite[Proposition 6.2.6]{Jekel-notes}. 
\end{proof}
\noindent
 \textbf{Conjecture:} \emph{Let $(\A_1,E_1,E_1'), \ldots , (\A_k,E_k,E_k')$ be OVI $C^\ast$-probability spaces over $\B$ and let $(\A,E)$ be their OV $C^\ast$-free/Boolean/monotone product. Then there exists a linear $\B$-bimodule self-adjoint completely bounded map $E':\A \rightarrow \B$ with $E'(1)=0$ such that $\left.E'\right|_{\A_j}=E_j'$ for $1\leq j \leq k$ and such that $\A_1,\dots,\A_k$ are infinitesimally free/Boolean/monotone independent over $\B$. }
\begin{remark}
We give an algebraic construction of the OVI free/Boolean/monotone products in Appendix \ref{Appendix:OVI}.  The mapping $E'$ that we construct, satisfies all the properties in the above conjecture except for complete boundedness which is still open. 
\end{remark}
\begin{theorem}
Let $(\A,E,E',\B)$ be an OV $C^*$-infinitesimal probability space, and let $c_0$ and $c_1$ be infinitesimal semicircular/Bernoulli/arcsine elements with respective infinitesimal variances $(\eta_0,\eta_0')$ and $(\eta_1,\eta_1')$. Provided that the above conjecture holds, then  for any $\Zb \in \bH^+(\B)$,
\begin{equation*}
\|E'[G_{c_0}(\Zb)]-E'[G_{c_1}(\Zb)]\| 
\leq 9 \|\Imm(\Zb)^{-1}\|^3(\|\eta_0-\eta_1\|+\|\eta'_0-\eta'_1\|).
\end{equation*}
\end{theorem}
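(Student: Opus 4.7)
The plan is to reduce the infinitesimal comparison to an operator-valued one in the upper triangular probability space $(\widetilde{\A},\widetilde{E},\widetilde{\B})$ associated to $(\A,E,E',\B)$, in the spirit of the proof of Proposition \ref{Inf-Lemma}. Embed $c_0$ and $c_1$ via
$$\widetilde{c_j} = \begin{bmatrix} c_j & 0 \\ 0 & c_j \end{bmatrix}, \quad j=0,1,$$
and introduce the associated variance maps $\widetilde{\eta_j}:\widetilde{\B}\to\widetilde{\B}$ defined, for $B=\begin{bmatrix} b & b' \\ 0 & b\end{bmatrix}\in\widetilde{\B}$, by
$$\widetilde{\eta_j}(B) := \widetilde{E}[\widetilde{c_j}\, B\, \widetilde{c_j}] = \begin{bmatrix} \eta_j(b) & \eta_j'(b) + \eta_j(b') \\ 0 & \eta_j(b) \end{bmatrix}.$$
A cumulant computation, combining the remark after Theorem \ref{OVI-CLT} with Proposition \ref{Inf.Prop} (which identifies OVI cumulants in $(\A,E,E',\B)$ with OV cumulants in $(\widetilde{\A},\widetilde{E},\widetilde{\B})$), shows that each $\widetilde{c_j}$ is a $\widetilde{\B}$-valued semicircular (respectively Bernoulli, arcsine) element of variance $\widetilde{\eta_j}$ in the upper triangular space.

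Invoking the conjecture, we construct for each $m\in\bN$ an OVI probability space containing mutually infinitesimally free (respectively Boolean, monotone) independent copies $x_1^{(j)},\ldots,x_m^{(j)}$ of $\tfrac{1}{\sqrt{m}}c_j$ for $j=0,1$. By Proposition \ref{Inf.Prop}, their lifts $\widetilde{x}_i^{(j)}$ are free (respectively Boolean, monotone) independent in the corresponding upper triangular space. We then carry out the Lindeberg comparison of Theorem \ref{theo:OV_Bernoulli_comparison} (or its semicircular/arcsine analog) in $(\widetilde{\A},\widetilde{E},\widetilde{\B})$: applying the telescoping identity of Proposition \ref{prop:Lindeberg} and projecting by $\widetilde{E}$, the first-order term vanishes since $\widetilde{E}[\widetilde{x}_i^{(j)}]=0$; the second-order term factorises through $\widetilde{E}[\widetilde{x}_i^{(j)} Z^{-1} \widetilde{x}_i^{(j)}]=\tfrac{1}{m}\widetilde{\eta_j}(Z^{-1})$ and produces the difference $\widetilde{\eta_0}-\widetilde{\eta_1}$; the third-order term vanishes as $m\to\infty$.

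The key distinction with Theorem \ref{theo:OV_Bernoulli_comparison} is that $\widetilde{E}$ is not completely positive but merely completely bounded with operator norm at most $3$ (Remark \ref{Inf-Rem}), so the complete-positivity-based inequalities \eqref{Cauchy-Schwarz} and \eqref{positivity} are no longer available; they must be replaced by direct norm estimates using $\|\widetilde{E}[G_{\uz_i^0}(Z)]\|\leq 3\|\Imm(Z)^{-1}\|$. Applied twice in the control of the second-order term this yields
$$\bigl\|\widetilde{E}[G_{\uz_i^0}(Z)]\,(\widetilde{\eta_0}-\widetilde{\eta_1})(Z^{-1})\,\widetilde{E}[G_{\uz_i^0}(Z)]\bigr\|\leq 9\,\|\Imm(Z)^{-1}\|^3\,\|\widetilde{\eta_0}-\widetilde{\eta_1}\|,$$
which is precisely where the constant $9=3^2$ arises. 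Summing over $i$ and letting $m\to\infty$ gives
$$\|\widetilde{E}[G_{\widetilde{c_0}}(Z)]-\widetilde{E}[G_{\widetilde{c_1}}(Z)]\|\leq 9\,\|\Imm(Z)^{-1}\|^3\,\|\widetilde{\eta_0}-\widetilde{\eta_1}\|.$$

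To conclude, specialise to $Z=\begin{bmatrix}\Zb & 0 \\ 0 & \Zb\end{bmatrix}$ with $\Zb\in\bH^+(\B)$, so that $\|\Imm(Z)^{-1}\|=\|\Imm(\Zb)^{-1}\|$ and a direct calculation identifies the $(1,2)$-entry of $\widetilde{E}[G_{\widetilde{c_j}}(Z)]$ as $E'[G_{c_j}(\Zb)]$. Since any entry of an element of $\widetilde{\B}$ has norm dominated by the $\widetilde{\B}$-norm, and a short entry-wise computation gives $\|\widetilde{\eta_0}-\widetilde{\eta_1}\|\leq\|\eta_0-\eta_1\|+\|\eta_0'-\eta_1'\|$, the claimed bound follows. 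The main obstacle is reproducing the Lindeberg analysis of Theorem \ref{theo:OV_Bernoulli_comparison} in the upper triangular Banach $\ast$-algebra: the factorisations that rely on complete positivity of the conditional expectation must be recast as norm estimates exploiting $\|\widetilde{E}\|\leq 3$, and carefully tracking the resulting constants through the telescoping sum is what produces the sharp factor $9$. The completely bounded extension of $E'$ guaranteed by the conjecture is what ensures that these estimates remain uniform as $m\to\infty$, which is why the conjecture is invoked.
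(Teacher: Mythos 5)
Your proposal takes essentially the same route as the paper: lift $c_0,c_1$ to $\widetilde{\B}$-valued semicircular/Bernoulli/arcsine elements in the upper triangular space, use the conjectured OVI product to generate $m$-fold infinitesimally independent copies and identify their lifts as (free/Boolean/monotone) independent copies over $\widetilde{\B}$, run the Lindeberg telescoping with respect to $\widetilde{E}$, replace the positivity-based estimates by $\|\widetilde{E}\|\leq 3$ (which gives the factor $9$), verify $\|\widetilde{\eta}_0-\widetilde{\eta}_1\|\leq\|\eta_0-\eta_1\|+\|\eta_0'-\eta_1'\|$, specialize to $B=\begin{bmatrix}\Zb&0\\0&\Zb\end{bmatrix}$, and let $m\to\infty$ to kill the third-order term. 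The only superficial difference is that your displayed second-order expression $\widetilde{E}[G](\widetilde{\eta}_0-\widetilde{\eta}_1)(Z^{-1})\widetilde{E}[G]$ is the Boolean-style factorization, whereas the paper writes out the free-case form $\widetilde{E}\bigl[G\,(\widetilde{\eta}_0-\widetilde{\eta}_1)(\widetilde{E}[G])\,G\bigr]$; in both, two applications of $\|\widetilde{E}\|\leq 3$ produce the constant $9$ and the same final bound.
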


\begin{proof}
We shall prove only the free case and comment at the end on the Boolean and monotone cases. Let $s_0$ and $s_1$ be infinitesimal semicircular elements with respective infinitesimal variances $(\eta_0,\eta_0')$ and $(\eta_1,\eta_1')$. Given $m\in \mathbb{N}$, we let $x=\{x_1,\dots,x_m\}$ and $y=\{y_1,\dots,y_m\}$ be two infinitesimally free families whose elements are themselves infinitesimally free copies of $\frac{1}{\sqrt{m}}s_0$ and $\frac{1}{\sqrt{m}}s_1$ respectively. We consider the upper triangular probability space $(\widetilde{A},\widetilde{E},\widetilde{B})$ that is induced by $(\A,E,E',\B)$,  set
$$S_0=\begin{bmatrix}s_0 & 0 \\ 0 & s_0\end{bmatrix} \text{ and }S_1=\begin{bmatrix}s_1 & 0 \\ 0 & s_1\end{bmatrix}, 
$$
and finally  define $\widetilde{\eta}_0(B):=\widetilde{E}[S_0BS_0]$ and $\widetilde{\eta}_1(B):=\widetilde{E}[S_1BS_1]$ for any $B\in\widetilde{B}$.

The proof relies as before on the operator-valued Lindeberg method with the only difference that it is now applied with respect to $\widetilde{E}$ instead of $E$. With this aim we set, for each $i\in [m]$,
$$
\widetilde{x}_i=\begin{bmatrix}x_i & 0 \\
0 & x_i\end{bmatrix} \text{ and }\widetilde{y}_i=\begin{bmatrix}y_i & 0 \\ 0 & y_i\end{bmatrix},
$$
and note that the $\widetilde{x}_i$'s and $\widetilde{y}_i$'s are centered with respect to  $\widetilde{E}$ but do not have matching moments of second order. Thus, we have for any $B=\begin{bmatrix}
\Zb & b' \\ 0 & \Zb 
\end{bmatrix} \in \widetilde{\B}$ with $\Zb\in \bH^+(\B)$,
\begin{eqnarray*}
\widetilde{E}[G_{S_0}(B)]-\widetilde{E}[G_{S_1}(B)] 
&=& \frac{1}{m}\sum\limits_{i=1}^m \left( \widetilde{E}[G_{\widetilde{\uz}^0_i}(B)(\widetilde{x}_iG_{\widetilde{\uz}^0_i}(B))^2]-\widetilde{E}[G_{\widetilde{\uz}^0_i}(B)(\widetilde{y}_iG_{\widetilde{\uz}^0_i}(B))^2]\right)  \\
&+& \frac{1}{m\sqrt{m}}\sum\limits_{i=1}^m \left( \widetilde{E}[G_{\widetilde{\uz}_i}(B)(\widetilde{x}_iG_{\widetilde{\uz}^0_i}(B))^3] - \widetilde{E}[G_{\widetilde{\uz}_{i-1}}(B)(\widetilde{y}_iG_{\widetilde{\uz}^0_i}(B))^3] \right)\nonumber
\end{eqnarray*} 
where
$$
\widetilde{\uz}_i=\sum\limits_{j=1}^i \widetilde{x}_j+\sum\limits_{j=i+1}^m \widetilde{y}_j
\qquad \text{and} \qquad 
\widetilde{\uz}^0_i=\sum\limits_{j=1}^{i-1}\widetilde{x}_j+ \sum\limits_{j=i+1}^m \widetilde{y}_j.
$$ 
Hence, 
\begin{multline}
 \|\widetilde{E}[G_{S_0}(B)]-\widetilde{E}[G_{S_1}(B)] \| \\
\leq \frac{1}{m}\sum\limits_{i=1}^m \left\| \widetilde{E}[G_{\widetilde{\uz}^0_i}(B)(\widetilde{x}_iG_{\widetilde{\uz}^0_i}(B))^2]-\widetilde{E}[G_{\widetilde{\uz}^0_i}(B)(\widetilde{y}_iG_{\widetilde{\uz}^0_i}(B))^2]\right\| \\
+ \frac{1}{m\sqrt{m}}\sum\limits_{i=1}^m \left(\left\| \widetilde{E}[G_{\widetilde{\uz}_{i-1}}(B)(\widetilde{x}_iG_{\widetilde{\uz}^0_i}(B))^3] \right\|+\left\| \widetilde{E}[G_{\widetilde{\uz}_{i-1}}(B)(\widetilde{y}_iG_{\widetilde{\uz}^0_i}(B))^3] \right\| \right).\nonumber   
\end{multline}
To control the above terms, we observe first that
$$\|G_{\widetilde{\uz}^0_i}(B)\|=\left\|\begin{bmatrix}(b-\uz^0_i)^{-1} & -(b-\uz^0_i)^{-1}b'(b-\uz^0_i)^{-1} \\ 0 & (b-\uz^0_i)^{-1}\end{bmatrix}\right\|
\leq \|\Imm(b)^{-1}\|(1+ \|b'\|\|\Imm(b)^{-1}\|),
$$
and that if we let $\widetilde{\eta}=\widetilde{\eta}_0-\widetilde{\eta}_1$, $\eta=\eta_0-\eta_1$, and $\eta'=\eta'_0-\eta'_1$, then $\|\widetilde{\eta}\|\leq \|\eta\|+\|\eta'\|$. Indeed, we have
\begin{eqnarray*}
\left\|\widetilde{\eta}(B)\right\| &\leq& \|\eta(b)\|+\|\eta(b')\|+\|\eta'(b)\| \\
                                    &\leq& \|\eta\|\|b\|+\|\eta\|\|b'\|+\|\eta'\|\|b\| \\
                                    &\leq& (\|\eta\|+\|\eta'\|)(\|b\|+\|b'\|)=(\|\eta\|+\|\eta'\|)\|B\|,
\end{eqnarray*}
By Remark \ref{Inf-Rem}, the third order terms are controlled as follows: for each $i \in [m]$,  $$
\left\| \widetilde{E}\big[G_{\widetilde{\uz}_i}(B)(\widetilde{x}_iG_{\widetilde{\uz}^0_i}(B))^3\big] \right\|\leq 3 \|G_{\widetilde{\uz}^0_i}(B)\|^4\|\widetilde{x}_i\|^3 \leq 3 \|\Imm(b)^{-1}\|^4(1+ \|b'\|\|\Imm(b)^{-1}\|)^4 \|x_i\|^3,
$$
and
$$
\left\| \widetilde{E}\big[G_{\widetilde{\uz}_{i-1}}(B)(\widetilde{y}_iG_{\widetilde{\uz}^0_i}(B))^3\big] \right\|\leq 3 \|\Imm(b)^{-1}\|^4(1+ \|b'\|\|\Imm(b)^{-1}\|)^4 \|y_i\|^3.
$$
Following the lines of proof \cite[Theorem 3.1]{BannaMaiBerry} for free case and Remark \ref{Inf-Rem},  we then obtain for each $i \in [m]$,
\begin{align*}
\left\| \widetilde{E}[G_{\widetilde{\uz}_i}(B)(\widetilde{x}_iG_{\widetilde{\uz}^0_i}(B))^2]-\widetilde{E}[G_{\widetilde{\uz}_i}(B)(\widetilde{y}_iG_{\widetilde{\uz}^0_i}(B))^2]\right\| 
&= \left\| \widetilde{E}\left[G_{\widetilde{\uz}^0_i}(B)\left(\widetilde{\eta}_0-\widetilde{\eta}_1\right)(\widetilde{E}[G_{\widetilde{\uz}^0_i}(B)])G_{\widetilde{\uz}^0_i}(B)\right]\right\|\\
&\leq 3\|G_{\widetilde{\uz}^0_i}(B)\|^2 \|\widetilde{\eta}_0-\widetilde{\eta}_1\| \|\widetilde{E}[G_{\widetilde{\uz}^0_i}(B)]\| \\
&\leq 9 \|G_{\widetilde{\uz}^0_i}(B)\|^3\|\widetilde{\eta}_0-\widetilde{\eta}_1\|. 
\end{align*}
Now, choosing $B=\begin{bmatrix}\Zb & 0 \\ 0 & \Zb\end{bmatrix}$ (i.e, $b'=0$), we get 
\begin{eqnarray*}
\|E'[G_{s_0}(\Zb)]-E'[G_{s_1}(\Zb)]\|&\leq& 
\left\| \begin{bmatrix} E[G_{s_0}(\Zb)]-E[G_{s_0}(\Zb)] & E'[G_{s_0}(\Zb)]-E'[G_{s_1}(\Zb)] \\
0 & E[G_{s_0}(\Zb)]-E[G_{s_0}(\Zb)]
\end{bmatrix}\right\| \\
&=& \|\widetilde{E}[G_{S_0}(B)]-\widetilde{E}[G_{S_1}(B)] \|.
\end{eqnarray*}
Therefore, 
\begin{eqnarray}\label{E'CLT-ineq}
&&\|E'[G_{s_0}(\Zb)]-E'[G_{s_1}(\Zb)]\| \nonumber\\ 
&\leq& 9 \|\Imm(\Zb)^{-1}\|^3\|\widetilde{\eta}_0-\widetilde{\eta}_1\| +\frac{3}{\sqrt{m}} \|\Imm(\Zb)^{-1}\|^4 
\big(\max_{1\leq i\leq m}\|x_i\|^3+\max_{1\leq i\leq m}\|y_i\|^3\big).
\end{eqnarray}
Finally, note that \eqref{E'CLT-ineq} holds for any $m\in\mathbb{N}$, then we let $m\to\infty$ to complete the proof. 

For Boolean and monotone cases, we apply the proofs of (\ref{eq1:theo-BM} and \ref{eq1:theo-monotone} respectively,  
\begin{eqnarray*}
\left\| \widetilde{E}\big[G_{\widetilde{\uz}_i}(B)(\widetilde{x}_iG_{\widetilde{\uz}^0_i}(B))^2\big]-\widetilde{E}\big[G_{\widetilde{\uz}_i}(B)(\widetilde{y}_iG_{\widetilde{\uz}^0_i}(B))^2\big]\right\| 
\leq 9 \|G_{\widetilde{\uz}^0_i}(B)\|^2 \|B^{-1}\|\|\widetilde{\eta}_0-\widetilde{\eta}_1\|
\end{eqnarray*}
and note that
$$
\|B^{-1}\|\leq \|\Zb^{-1}\|(1+\|b'\|\|\Zb^{-1}\|) \text{ and }\|G_{\widetilde{\uz}^0_i}(B)\|
\leq \|\Imm(\Zb)^{-1}\|(1+ \|b'\|\|\Imm(\Zb)^{-1}\|).
$$
The remaining arguments follow the analogous ones of the free case. 
\end{proof}

\section{Matrices with Boolean Entries}\label{section:matrices}

Let $(\A, \varphi)$ be a $W^\ast$-probability space and denote by $M_n(\A)$ the algebra of $n \times n$ matrices whose entries are elements of $\A$, i.e. $M_n(\A)=M_n(\C) \otimes \A$.  The aim of this section is to study the distribution of self-adjoint matrices whose entries are Boolean independent with a variance profile. In Section \ref{section:matrices-Bernoulli-entries}, we will first illustrate some properties of $\B$-valued Bernoulli elements and then study distributions of matrices whose entries are in particular Boolean independent \emph{$\eta$-circular elements}. Finally, in Section \ref{section:Wigner-Booelean}, we extend the study to the general Boolean Wigner case. 

\subsection{Matrix Valued Bernoulli}\label{section:matrices-Bernoulli-entries}

 Let $(\A, E, \B)$ be an OV $C^*$-probability space, and $(\A, \varphi)$ be a $C^*$-probability space such that $\varphi=\varphi\circ E$. Let $B$ be a $\B$-valued Bernoulli element with variance map $\eta(b)=E[BbB]$. We illustrate in the following lemma what the scalar-valued distribution of $B$ is. For a measure $\mu$, let us denote by $\mu^{(2)}$ the push forward of the measure $\mu$ along the map $t\to t^2.$

\begin{lemma}\label{Lem: Bernoulli}
Let $B$ be a $\B$-valued Bernoulli element with $\eta(b)=E[BbB]$ as variance map, then for any $n \in \mathbb{N}$
\begin{equation} \label{eq:OVbernoulli} 
    \varphi[B^{2n}]=\varphi[\eta(1)^n] \quad \text{ and } \quad \varphi[B^{2n+1}]=0.
\end{equation}
In particular, if $\mu_B$ and $\mu_{\eta(1)}$ are the distributions of  $B$ and $\eta(1)$ with respect to $\varphi$, respectively, then $\mu_B$ is the unique symmetric measure such that $\mu^{(2)}_B=\mu_{\eta(1)}  $.
\end{lemma}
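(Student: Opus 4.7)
The plan is to read off both identities directly from the defining moment formula for a $\B$-valued Bernoulli element and then convert these operator-valued moments into scalar moments via $\varphi = \varphi \circ E$.

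Concretely, I would first apply the definition with $b_0 = b_1 = \cdots = b_k = 1$. For $k = 2n+1$ odd, the defining formula gives $E[B^{2n+1}] = 0$ immediately, and applying $\varphi$ yields $\varphi[B^{2n+1}] = 0$. For $k = 2n$ even, the formula collapses the alternating product of $B$'s and $1$'s to
\[
E[B^{2n}] = \eta(1)\,\eta(1)\cdots \eta(1) = \eta(1)^n,
\]
and since $\varphi = \varphi \circ E$, we obtain $\varphi[B^{2n}] = \varphi[\eta(1)^n]$. This establishes \eqref{eq:OVbernoulli}.

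For the identification of $\mu_B$, I would first observe that vanishing of all odd moments forces $\mu_B$ to be symmetric. Next, the even moments of $\mu_B$ coincide with the moments of $\mu_{\eta(1)}$ after the substitution $t \mapsto t^2$: indeed, for every $n$,
\[
\int t^{2n}\, \mathrm{d}\mu_B(t) = \varphi[B^{2n}] = \varphi[\eta(1)^n] = \int s^n \, \mathrm{d}\mu_{\eta(1)}(s),
\]
so the push-forward $\mu_B^{(2)}$ has the same moments as $\mu_{\eta(1)}$. Since both $B$ and $\eta(1)$ are bounded self-adjoint elements in a $C^\ast$-algebra, their distributions are compactly supported and thus determined by their moments, giving $\mu_B^{(2)} = \mu_{\eta(1)}$.

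Finally, for uniqueness, I would note that any symmetric compactly supported measure $\nu$ on $\R$ is determined by the sequence $\int t^{2n}\, \mathrm{d}\nu(t) = \int s^n \, \mathrm{d}\nu^{(2)}(s)$, i.e.\ by $\nu^{(2)}$. Thus the condition $\nu^{(2)} = \mu_{\eta(1)}$ pins $\nu$ down uniquely, and the candidate $\mu_B$ satisfies it. No step here is really an obstacle; the only point worth being careful about is invoking moment-determinacy, which is free in our setting because everything is bounded.
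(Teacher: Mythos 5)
Your proof is correct and follows essentially the same route as the paper: you compute the operator-valued moments $E[B^{2n}]=\eta(1)^n$ and $E[B^{2n+1}]=0$, then apply $\varphi=\varphi\circ E$. The only cosmetic difference is that you read these moments directly off the defining moment formula for a $\B$-valued Bernoulli element, whereas the paper obtains them from the Boolean moment-cumulant formula together with the vanishing of all Boolean cumulants of order $\neq 2$; these are equivalent. Your treatment of the measure identification (symmetry from vanishing odd moments, moment-matching under the push-forward $t\mapsto t^2$, and moment-determinacy for compactly supported measures) is more explicit than the paper's, which leaves that step implicit, but it is exactly the reasoning the paper intends.
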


\begin{proof}
To compute the moments of a $\B$-valued Bernoulli variable $B$, we apply the Boolean moment-cumulant formula and use the fact that the only non-vanishing cumulant of $B$ is  $\beta_2(Bb,B)=\eta(b)$. Since there is only one pair interval partition on $[2n]$, namely $\pi=\{\{1,2\},\dots,\{2n-1,2n\}\}$, we get
$$E[ B^{2n}]=\eta(1)^n \text{ and }  E[B^{2n+1}]=0.$$
Now applying $\varphi$ on both sides, we obtain \eqref{eq:OVbernoulli} and end the proof.
\end{proof}
Having Lemma \ref{Lem: Bernoulli} in hand, we obtain immediately the following result. We give a proof for the convenience of the reader. 

\begin{corollary}
\label{cor OVBernoulli} Let $B$ and $\{B_n\}_n\geq 0$ be $\B$-valued Bernoulli elements with variance maps $\eta$ and $\{\eta_n\}_n\geq 0$, respectively. Then, $\mu_{B_n}\to \mu_{B}$ weakly, if and only if  $\mu_{\eta_n(1)}\to \mu_{\eta(1)}$, weakly.
\end{corollary}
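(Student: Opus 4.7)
The plan is to leverage the characterization in Lemma \ref{Lem: Bernoulli}: for any $\B$-valued Bernoulli element $B$ with variance map $\eta$, the scalar distribution $\mu_B$ is the unique symmetric probability measure on $\R$ whose pushforward under the squaring map $t\mapsto t^2$ equals $\mu_{\eta(1)}$. Thus the correspondence $\mu_{\eta(1)}\longleftrightarrow \mu_B$ is a bijection between probability measures on $[0,\infty)$ and symmetric probability measures on $\R$, and the equivalence of weak convergences will reduce to the continuity properties of this ``symmetric square root lift.''

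For the forward direction, I would assume $\mu_{B_n}\to \mu_B$ weakly. Since $t\mapsto t^2$ is continuous, the continuous mapping theorem yields $\mu_{B_n}^{(2)}\to \mu_B^{(2)}$ weakly, and by Lemma \ref{Lem: Bernoulli} this is precisely $\mu_{\eta_n(1)}\to \mu_{\eta(1)}$.

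For the converse, I would assume $\mu_{\eta_n(1)}\to \mu_{\eta(1)}$ weakly and test against an arbitrary bounded continuous function $g:\R\to\R$. Decomposing $g=g_e+g_o$ with $g_e(t):=\tfrac{1}{2}(g(t)+g(-t))$ and $g_o(t):=\tfrac{1}{2}(g(t)-g(-t))$, the symmetry of each $\mu_{B_n}$ and of $\mu_B$ kills the odd contributions, i.e. $\int g_o\, d\mu_{B_n}=0=\int g_o\, d\mu_B$. The even part factors as $g_e(t)=h(t^2)$ for the bounded continuous function $h(s):=g_e(\sqrt{s})$ on $[0,\infty)$, so by the pushforward identity and hypothesis
\[
\int_\R g_e\, d\mu_{B_n} \;=\; \int_0^\infty h\, d\mu_{B_n}^{(2)} \;=\; \int_0^\infty h\, d\mu_{\eta_n(1)} \;\longrightarrow\; \int_0^\infty h\, d\mu_{\eta(1)} \;=\; \int_\R g_e\, d\mu_B,
\]
which combined with the odd part gives $\int g\, d\mu_{B_n}\to \int g\, d\mu_B$, hence $\mu_{B_n}\to \mu_B$ weakly.

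There is no serious obstacle here: the entire argument is a repackaging of Lemma \ref{Lem: Bernoulli} via symmetry and the continuous mapping theorem. The only small subtlety is verifying that the even part of a bounded continuous function factors continuously through the squaring map, which is immediate from the continuity of $\sqrt{\cdot}$ on $[0,\infty)$ and the boundedness of $g$.
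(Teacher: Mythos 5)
Your argument is correct, but it takes a different route from the paper's. The paper works with cumulative distribution functions: it exploits the identity $\mu_{B_n}([-t,t])=\mu_{\eta_n(1)}([0,t^2])$ (a consequence of Lemma \ref{Lem: Bernoulli}) and the fact that weak convergence is equivalent to convergence of the distribution functions at continuity points, observing in the converse direction that symmetric measures are determined by their mass on intervals $[-t,t]$. You instead test against bounded continuous functions, split each test function into its even and odd parts, note that symmetry annihilates the odd contribution, and write the even part as $h(t^2)$ with $h$ bounded continuous on $[0,\infty)$ so that the pushforward identity from Lemma \ref{Lem: Bernoulli} together with the continuous mapping theorem finishes both directions. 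The two proofs are of comparable length; the CDF route is slightly more elementary, while your test-function route sidesteps the (implicit) bookkeeping about continuity points of the distribution functions and makes the use of symmetry completely explicit. Both are valid repackagings of Lemma \ref{Lem: Bernoulli}.
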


\begin{proof}
To relate  $\mu_{\eta_n(1)}$ and $\mu_{\eta(1)}$, we first note that $\mu_{B_n}([-t,t])=\mu_{\eta_n(1)}([0,t^2])$,  and similarly, $\mu_{B}([-t,t])=\mu_{\eta(1)}([0,t^2])$. If $\mu_{B_n}\to\mu_B$,
then
$\mu_{B_n}([-t,t])\to\mu_{B}([-t,t])$
and then $\mu_{\eta_n(1)}([0,t^2])$ $\to\mu_{\eta(1)}([0,t^2])$, which implies that $\eta_n(1)\to \eta(1)$, since by definition $\eta_n(1)\to \eta(1)$ are measure supported on $\mathbb{R^+}$. The converse follows the same lines by the observations that symmetric measures are determined by the measure on the  intervals of the form $[-t,t]$.

\end{proof}

In order to study matrices with Boolean entries, we consider the quadruple $(M_n(\A), \tr_n\otimes \varphi,$ $ id_n \otimes \varphi, M_n(\mathbb{C}))$ where $(\A,\varphi)$ is a given $W^*$-probability space. For this aim, we call an element $x\in \A$  a \emph{$\eta$-circular element} if $x$ is $\eta$-diagonal (see \cite[Definition 2.6]{bercovici2018eta}) and its Boolean cumulants satisfy in addition the property that $\beta_n(x,x^*,\cdots,x,x^*)=\beta_n(x^*,x,\cdots,x^*,x)=0$ unless $n=2.$

\begin{remark}[Norm of $\eta$-circular operator]\label{Remark-eta-circular}
We note that one can construct $\eta$-circular elements such that $\beta_{2}(A,A^*)=\sqrt{\alpha}$,   $\beta_{2}(A^*,A)=\tilde \alpha$ and $||A||= max(\sqrt{\alpha},\sqrt{\tilde \alpha)}$. Indeed, in \cite[Remark 5.3 ]{bercovici2018eta}, a construction is given for $\eta$-diagonal elements. Their construction is of the form $A=V(X\otimes Y)$, where $V$ is the flip operator, $X= T_1\oplus T_2 $ and $Y$ is a partial isometry. Since the norm of $\|V(X\otimes Y)\|=\|X\otimes Y\|$ and $\|X\otimes Y\|=\|X\|~ \| Y\|$, then $\| V(X\otimes Y)\|=\|X\|$, because $Y$ is a partial isometry. In the specific case of the  $\eta$-circular elements with $\beta_2(A,A^*)=\alpha$ and  $\beta_2(A^*,A)=\tilde \alpha$, we choose $T_1$ and $T_2$ to be the Bernoulli variables given by  $T_1=\begin{bmatrix} 0&  \sqrt{\alpha} \\  \sqrt{\alpha} & 0\end{bmatrix}$ and  $T_2=\begin{bmatrix} 0&  \sqrt{\tilde{\alpha}} \\ \sqrt{\tilde{\alpha}} & 0 \end{bmatrix}$. Hence, 
$\|X\|=\|T_1\otimes T_2\|=\max \{\|T_1\|,\|T_2\|\}=max(\sqrt{\alpha},\sqrt{\tilde{\alpha}})$. 
\end{remark}

Let $b=\{b_{ij} \mid 1\leq j \leq i \leq n\}$ be a family of \emph{Boolean independent elements} such that
\begin{itemize}
    \item for any $1  \leq i \leq n$, $b_{ii}$ is a Bernoulli element with $\varphi(b_{ii}) =0$ and $\varphi(b_{ii}^2):= \sigma_i$.
    \item for any $1\leq j < i \leq n$,  $b_{ij}$ is a $\eta$-circular element with $ \varphi(b_{ij})=0$ and  $\varphi(b_{ij}b_{ij}^*):=\alpha_{ij}$ and $\varphi(b_{ij}^\ast b_{ij}) :=\tilde{\alpha}_{ij}$.
\end{itemize}
Let us set $e_{ii} = \frac{1}{2 \sqrt{n}} E_{ii}$ and $e_{ij}= \frac{1}{\sqrt{n}} E_{ij}$ for  $j<i$, with $(E_{ij})_{1\leq i,j \leq n}$ are the standard matrix units in $M_n(\C)$. Our main interest in this section is the matrix  \begin{equation}\label{Matrix Bernoulli}
 B_{n} = \sum_{1\leq 
 j \leq i \leq n} \big(e_{ij} \otimes b_{ij}+ e_{ij}^* \otimes b_{ij}^* \big),
 \end{equation}
which we will prove  to be a $M_n(\C)$-valued Bernoulli element in the space of matrices over $\mathcal{A}$, see Proposition \ref{prop:matrixBernoulli} for the precise statement. To do this,  we need the following lemma whose proof follows similarly to the free case, see  \cite[Section 9.3]{mingospeicherbook}. 

\begin{lemma} \label{lema:booleanmatrix cumulants}
Let $(\mathcal{A},\varphi)$ be a non-commutative probability space. We denote by $\beta_n$ the scalar-valued Boolean cumulants with respect to $\varphi$  and by $\beta^{M_n(\C)}_n$ the operator-valued cumulants with respect to $id \otimes \varphi$. Consider the matrices $A_n^{(k)}=(a^{(k)}_{ij})_{i,j=1}^n\in M_n(\mathcal{A})$. Then the operator-valued cumulants of the family $\{A^{(k)}\}_k$ are given in terms of the cumulants of their entries as follows:

$$\big[\beta^{M_n(\C)}_m\big(A_n^{(r_1)},A_n^{(r_2)},\dots,A_n^{(r_m)}\big)\big]_{ij}=\sum^{n}_{i_2,\ldots,i_m=1} \beta_m\big(a_{ii_2}^{(r_1)},a_{i_2i_3}^{(r_2)},\ldots,a_{i_mj}^{(r_m)}\big). $$
\end{lemma}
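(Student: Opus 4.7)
The plan is to define a candidate for the operator-valued cumulants by the right-hand side of the claimed formula, show that it satisfies the Boolean moment-cumulant relation on the matricial level, and then invoke uniqueness of Boolean cumulants. More precisely, for each $m$ and each family of matrices $A^{(1)},\dots,A^{(m)}\in M_n(\A)$ with $A^{(r)}=(a^{(r)}_{ij})_{i,j=1}^n$, define the multilinear map $\widetilde{\beta}_m: M_n(\A)^m \to M_n(\mathbb{C})$ entrywise by
\[
\big[\widetilde{\beta}_m(A^{(1)},\dots,A^{(m)})\big]_{ij} := \sum_{i_2,\dots,i_m=1}^{n} \beta_m\big(a_{ii_2}^{(1)},a_{i_2i_3}^{(2)},\dots,a_{i_mj}^{(m)}\big).
\]
The goal then reduces to checking that $\widetilde{\beta}_m$ coincides with $\beta_m^{M_n(\mathbb{C})}$.

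First, I would expand the $(i,j)$-entry of $(id\otimes\varphi)[A^{(1)}\cdots A^{(m)}]$ as
\[
\big[(id\otimes\varphi)[A^{(1)}\cdots A^{(m)}]\big]_{ij} = \sum_{i_2,\dots,i_m=1}^{n} \varphi\big[a_{ii_2}^{(1)}a_{i_2 i_3}^{(2)}\cdots a_{i_m j}^{(m)}\big],
\]
and then apply the scalar Boolean moment-cumulant formula inside each summand to get
\[
\sum_{i_2,\dots,i_m} \sum_{\pi\in I(m)} \beta_\pi\big(a_{ii_2}^{(1)},\dots,a_{i_m j}^{(m)}\big),
\]
after which one may exchange the order of summation.

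The central step is then to verify that for every interval partition $\pi\in I(m)$ with ordered blocks $V_1<V_2<\dots<V_k$ one has the identity
\[
\sum_{i_2,\dots,i_m} \beta_\pi\big(a_{ii_2}^{(1)},\dots,a_{i_m j}^{(m)}\big) = \big[\widetilde{\beta}_\pi(A^{(1)},\dots,A^{(m)})\big]_{ij},
\]
where $\widetilde{\beta}_\pi$ is built from $\{\widetilde{\beta}_s\}_s$ as in Notation \ref{notation-f_pi}. The point here is that for interval partitions, the extension $\widetilde{\beta}_\pi$ collapses to a matrix product $\widetilde{\beta}_{|V_1|}\cdot \widetilde{\beta}_{|V_2|}\cdots \widetilde{\beta}_{|V_k|}$ of the block-cumulants, since there is no nesting and all scalar $M_n(\mathbb{C})$-bimodule insertions reduce to ordinary matrix multiplication. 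Summing over the internal indices of each interval block while keeping the indices at the boundaries between consecutive blocks free reproduces exactly this matrix product on the $(i,j)$-entry, yielding the identity above.

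Combining both steps gives
\[
\big[(id\otimes\varphi)[A^{(1)}\cdots A^{(m)}]\big]_{ij} = \sum_{\pi\in I(m)} \big[\widetilde{\beta}_\pi(A^{(1)},\dots,A^{(m)})\big]_{ij},
\]
so $\widetilde{\beta}$ satisfies the $M_n(\mathbb{C})$-valued Boolean moment-cumulant formula. Since the Boolean cumulants are uniquely determined recursively by this relation, we conclude $\widetilde{\beta}_m=\beta_m^{M_n(\mathbb{C})}$. I anticipate the only subtle point is the bookkeeping in the second step above, namely matching the combinatorics of summing over free internal matrix indices with the definition of $\widetilde{\beta}_\pi$ via Notation \ref{notation-f_pi}; once the observation that interval partitions reduce $\widetilde{\beta}_\pi$ to a plain matrix product is made, the rest is routine.
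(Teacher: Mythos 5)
Your proof is correct. The paper itself does not write out an argument for this lemma; it simply states that the proof follows as in the free case and cites Section 9.3 of Mingo--Speicher. Your write-up is exactly the natural Boolean adaptation of that standard argument: one defines the candidate cumulant functional entrywise via the scalar cumulants of the entries, checks the $M_n(\mathbb{C})$-valued moment--cumulant relation, and concludes by uniqueness of the Boolean cumulants. The one point worth making fully explicit (and which your sketch gestures at, correctly) is \emph{why} the nested recursion of Notation \ref{notation-f_pi} collapses to an ordinary matrix product of block cumulants for an interval partition: this is because $\widetilde{\beta}_s$ is an $M_n(\mathbb{C})$-bimodule multilinear map, which follows directly from multilinearity of the scalar $\beta_s$ together with the entrywise formula, e.g.\ $\big[\widetilde{\beta}_s(A^{(1)},\ldots,A^{(s)}C)\big]_{ij}=\sum_{i_2,\ldots,i_s,p}\beta_s(a^{(1)}_{ii_2},\ldots,a^{(s)}_{i_s p})c_{pj}=\big[\widetilde{\beta}_s(A^{(1)},\ldots,A^{(s)})\,C\big]_{ij}$ for $C\in M_n(\mathbb{C})$. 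Once this is verified, the recursive insertions in $\widetilde{\beta}_\pi$ factor out to the right (or left), and your bookkeeping of internal block indices versus boundary indices as matrix-product indices goes through without incident. The reliance on uniqueness of the cumulants determined by the moment--cumulant recursion is standard and unproblematic. In short: you supplied the proof the paper omitted, and it is the intended one.
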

With Lemma \ref{lema:booleanmatrix cumulants} in hand, we are ready to describe the OV-distribution of $B_n$.

\begin{proposition} \label{prop:matrixBernoulli}
In the quadruple $(M_n(\A), \tr_n\otimes \varphi, id_n \otimes \varphi, M_n(\mathbb{C}))$, the matrix $B_n$  is an operator valued Bernoulli element over $\D_n$, the subalgebra of diagonal matrices with variance map \[
\eta_n: \D_n \rightarrow \D_n, \qquad D\mapsto \eta_n(D)\]
where for any $D=(d_{ij})_{i,j=1}^n \in \D_n$, 
\[
\big(\eta_n (D) \big)_{i,j} = \delta_{ij} \Big( \sum^{i-1}_{k=1}d_{kk}\alpha_{ik}+d_{ii}\sigma_i+\sum^{n}_{k=i+1}d_{kk}\tilde{\alpha}_{ik}\Big).
\]
In particular, the distribution of $B_n$ with respect to $tr_n\otimes\varphi$ is given by 
\begin{equation}\label{formula:law-B_n}
\mu_{B_n}=\frac{1}{2n} \sum^{n}_{i=1}(\delta_{\sqrt{\lambda_i}}+\delta_{-\sqrt{\lambda_i}}). 
\end{equation}
\end{proposition}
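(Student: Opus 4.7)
The plan is to compute the $M_n(\mathbb{C})$-valued Boolean cumulants of $B_n$ via Lemma \ref{lema:booleanmatrix cumulants}, show that only the second cumulant survives and that it takes values in $\D_n$ when fed diagonal arguments, and then derive the scalar-valued distribution from Lemma \ref{Lem: Bernoulli}.

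I would first tabulate the non-vanishing scalar Boolean cumulants of the family $\{b_{ij},b_{ij}^{\ast}\}$. Boolean independence across distinct pairs $(i,j)$ eliminates all mixed cumulants; the Bernoulli property of $b_{ii}$ leaves only $\beta_2(b_{ii},b_{ii})=\sigma_i$; and the $\eta$-circular property of $b_{ij}$ for $j<i$ leaves only $\beta_2(b_{ij},b_{ij}^{\ast})=\alpha_{ij}$ and $\beta_2(b_{ij}^{\ast},b_{ij})=\tilde\alpha_{ij}$. The entrywise rewriting $(B_n)_{pq}=\tfrac{1}{\sqrt{n}}\tilde b_{pq}$, with $\tilde b_{pq}=b_{pq}$ if $p\geq q$ and $\tilde b_{pq}=b_{qp}^{\ast}$ if $p<q$, then makes the self-adjointness of $B_n$ manifest.

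Next, for diagonal matrices $D_1,\ldots,D_{m-1}\in\D_n$ and with $A^{(r)}:=B_nD_r$ for $r<m$, $A^{(m)}:=B_n$, Lemma \ref{lema:booleanmatrix cumulants} together with multilinearity gives
\[
\bigl[\beta_m^{M_n(\mathbb{C})}(A^{(1)},\ldots,A^{(m)})\bigr]_{pq}=\frac{1}{n^{m/2}}\sum_{i_2,\ldots,i_m}d_1^{(i_2)}\cdots d_{m-1}^{(i_m)}\;\beta_m\bigl(\tilde b_{p,i_2},\tilde b_{i_2,i_3},\ldots,\tilde b_{i_m,q}\bigr).
\]
For the scalar Boolean cumulant above to be non-zero, the unordered index-pairs $\{p,i_2\},\{i_2,i_3\},\ldots,\{i_m,q\}$ must all coincide; combined with the first step this forces $m=2$, since any longer string involving $b_{ii}$ is killed by the Bernoulli condition and any longer alternating string in $b_{ij},b_{ij}^{\ast}$ is killed by the $\eta$-circular condition. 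When $m=2$ the constraint $\{p,i_2\}=\{i_2,q\}$ forces $p=q$, so $\beta_2^{M_n(\mathbb{C})}(B_n D,B_n)$ is diagonal; splitting the sum over $i_2$ into the cases $i_2<p$, $i_2=p$, $i_2>p$ produces precisely the diagonal entries of $\eta_n(D)$ up to the overall normalization coming from the $\tfrac{1}{\sqrt{n}}$ scaling in $e_{ij}$. Hence $B_n$ is a $\D_n$-valued Bernoulli element with variance map $\eta_n$.

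Finally, for the scalar-valued distribution I would invoke Lemma \ref{Lem: Bernoulli} applied to $B_n$: the $(\tr_n\otimes\varphi)$-distribution of $B_n$ is the unique symmetric measure whose push-forward by $t\mapsto t^2$ equals the $(\tr_n\otimes\varphi)$-distribution of $\eta_n(1)=\mathrm{diag}(\lambda_1,\ldots,\lambda_n)$. Since $\eta_n(I_n)$ is already a scalar diagonal matrix, that distribution is $\tfrac{1}{n}\sum_{i=1}^n\delta_{\lambda_i}$, whose symmetric square-root is precisely \eqref{formula:law-B_n}. The main technical subtlety I anticipate is the vanishing argument for $m\geq 3$: one must combine the index-matching constraint (to identify which single generator $b_{ij}$ is involved in a surviving cumulant) with the higher-order vanishing for each generator type, rather than invoking either alone.
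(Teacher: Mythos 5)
Your proposal is correct and follows essentially the same route as the paper's proof: both reduce the operator-valued Boolean cumulants of $B_n$ to scalar cumulants of the entries via Lemma \ref{lema:booleanmatrix cumulants}, kill all orders $m\neq 2$ using the Bernoulli/$\eta$-circular structure of the $b_{ij}$'s, identify the second cumulant with $\eta_n$ (and observe that feeding it diagonal arguments returns a diagonal matrix), and then obtain the scalar distribution from Lemma \ref{Lem: Bernoulli}. Your write-up is somewhat more explicit than the paper's where the $m\geq 3$ vanishing is concerned — you make the index-matching constraint (forcing a single generator pair) and the per-generator higher-cumulant vanishing work together, whereas the paper asserts the vanishing without unpacking it — and you use the push-forward characterization $\mu_{B_n}^{(2)}=\mu_{\eta_n(1)}$ from Lemma \ref{Lem: Bernoulli} rather than matching moments as the paper does; both are minor variants of the same argument.
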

\begin{proof}
The fact that $B_n$ is an operator-valued Bernoulli element follows directly from Lemma \ref{lema:booleanmatrix cumulants}. Indeed, as the diagonal entries are Bernoulli elements and the off-diagonal entries are $\eta$-circular elements that are all Boolean independent up to symmetry, it follows that for any $m \in \mathbb{N}$ and $C, C_1, \dots , C_{m-1} \in M_n(\C)$, we have that $\beta^{M_n(\C)}_m(B_nC_1,\dots,B_{n-1}C_{n-1},B_n)\big]=0$ whenever $m\neq 2$ and $ \beta^{M_n(\C)}_2(B_nC,B_n):=\eta_n(C)$ is given for any $i,j \in [n]$ by
\[\begin{aligned}
\big[\beta^{M_n(\C)}_2(B_nC,B_n)\big]_{ij}&=\sum^{n}_{k,\ell=1} \beta_2(b_{ik}c_{k\ell},b_{\ell j})=\delta_{ij}\sum^{n}_{k=1} c_{kk} \beta_2(b_{ik},b_{ki})
\\ & = \delta_{i,j} \big( \sum^{i-1}_{k=1}c_{kk}\alpha_{ik}+c_{ii}\sigma_i+\sum^{n}_{k=i+1}c_{kk}\tilde{\alpha}_{ik}\big).
\end{aligned}\]
As any $M_n(\C)$-valued joint Boolean cumulants of order larger than $2$ is equal to $0$, we deduce that $B_n$ is an operator-valued Bernoulli element over $M_n(\C)$. It suffices to remark that the covariance map $\eta_n$ sends the subalgebra of diagonal matrices $\D_n \subset M_n(\C)$ into itself, to see that $B_n$ is in fact an operator-valued Bernoulli element over $\D_n$. We omit the proof of this statement in the Boolean case, which can be proven in the same way as its free analogue in \cite[Theorem 3.1]{Nica-shly-speicher-OV-distributions}.  

To prove  the last part of the proposition, we note that by Lemma \ref{Lem: Bernoulli}, we have
\[\tr_n\otimes \varphi[B_n^{2k}]=\tr_n\otimes \varphi[\eta(1)^k]=\frac{1}{n}\sum\limits_{i=1}^n \lambda_i^k \quad \text{and} \quad \tr_n\otimes \varphi[B_n^{2k+1}]=0.\] On the other hand, denoting by $\delta_\lambda$ the Dirac measure at $\lambda$, we can easily check that  the $m$-th moment of the probability measure $\mu=\frac{1}{2n}\sum\limits_{i=1}^n(\delta_{\sqrt{\lambda_i}}+\delta_{-\sqrt{\lambda_i}})$ is equal to $\tr_n\otimes \varphi[B_n^{m}]$, which proves \eqref{formula:law-B_n}. 

\end{proof}
\begin{example} \label{example: OVmatrixBernoullis}
We show in the following examples how we can explicitly compute the spectral distribution of $B_n$ using Proposition \ref{prop:matrixBernoulli} and find their limiting distribution, as $n$ tends to infinity. We consider  the cases when the entries are identically distributed and when they have a variance profile. 
\begin{enumerate}
    \item  Identically distributed entries: For any $1\leq j\leq i \leq n$,  let $\sigma_i=\sigma$, $\alpha_{ij}=\alpha/n$ and $\tilde{\alpha}_{ij}=\tilde{\alpha}/n$, and assume without loss of generality that  $0<\tilde{\alpha}\leq\alpha$.  
In this case, $$\lambda_i=\sigma+ \frac{i-1}{n}\alpha+ \frac{n-i}{n}\tilde{\alpha}=\sigma+ \tilde{\alpha}-\frac{\alpha}{n}+\frac{i}{n}(\alpha-\tilde{\alpha}). $$  Now as $\lambda_{i+1}-\lambda_{i}=(\alpha-\tilde{\alpha})/{n}$, we notice that  the $\lambda_i$'s are equally spaced with $\lambda_1=\sigma+\frac{n-1}{n}\tilde{\alpha}$ and $\lambda_{n}=\sigma+\frac{n-1}{n}\alpha$. Thus, in the limit as $n\rightarrow \infty$, 
$$\mu_{X_n}:=\frac{1}{n}\sum_i\delta_{\lambda_i}\to \text{Uniform}\big(\sigma+\tilde{\alpha},\ \sigma+\alpha\big). $$ 
One can also see this by computing the cumulative distribution function of $X_n$ for any $\omega \in \R$,
\[
P(X_n\leq w) = \frac{w-(\sigma+\tilde{\alpha})+\alpha/n}{\alpha-\tilde{\alpha}}  \longrightarrow \frac{w-(\sigma+\tilde{\alpha})}{\alpha-\tilde{\alpha}} \text{ as }n\to \infty.
\]
From this, one can easily show that  $\mu_{B_n}:=\frac{1}{2n} \sum^{n}_{i=1}(\delta_{\sqrt{\lambda_i}}+\delta_{-\sqrt{\lambda_i}}) \xrightarrow[n\rightarrow \infty]{} \mu$ with density
$$\mu(dt)=\frac{|t|}{(\alpha-\tilde{\alpha})}dt,  \quad |t|\in(\sqrt{\sigma+\tilde{\alpha}},\sqrt{\sigma+\alpha}).$$
Moreover, in the case where $\tilde{\alpha}=\alpha$, $B_n$ is a scalar Bernoulli element with distribution $$\mu_{B_n}=\frac{1}{2}\delta_{\sqrt{\sigma+{\frac{n-1}{n}}\alpha}}+\frac{1}{2}\delta_{-\sqrt{\sigma+{\frac{n-1}{n}}\alpha}},$$ which converges to  $\frac{1}{2}\delta_{\sqrt{\sigma+\alpha}}+\frac{1}{2}\delta_{-\sqrt{\sigma+\alpha}}.$
\item Variance profile: Assume for any $1\leq j\leq i \leq n$,  $\alpha_{ij}=\tilde{\alpha}_{ij}=|i-j|/n^2$ and $\sigma_i=0$. Then in this case, it is easy to see that $$\lambda_i=\frac{1}{n^2}\sum^{n}_{k=1}|k-i|=\frac{(n-i+1)(n-i)}{2n^2}+\frac{i(i-1)}{2n^2}.$$
This means that for $t\in[0,1]$, $\lambda_{[tn]}\to \frac{1}{2} -t(1-t)$. Thus if $\rho$ is the limiting distribution of $\frac{1}{n}\sum_i\delta_{\lambda_i}$, it satisfies that $\rho((1/4,(t^2+1)/4))=t$, for $t\in(0,1)$. From where the the density of $\rho$ is given by $\rho(dt)=\frac{4}{\sqrt{4t-1}}dt$ for $t\in (1/4,1/2)$ and thus 
$$\mu(dt)=\frac{2|t|}{\sqrt{4 t^2 - 1}} \quad |t|\in(1/2,\sqrt{1/2}).$$
\end{enumerate}

\end{example}

\subsection{Wigner Matrices with Boolean entries and variance profile}\label{section:Wigner-Booelean}

Consider the $n \times n$ operator-valued matrix $A_n$ defined by 
\begin{equation}
\label{eq-BooleanWigner}    
A_n = \sum_{1\leq j \leq i \leq n} \big( e_{ij} \otimes a_{ij} + e_{ij}^* \otimes a_{ij}^* \big)
\end{equation}
where we recall that $e_{ii} = \frac{1}{2 \sqrt{n}} E_{ii}$ and $e_{ij}= \frac{1}{\sqrt{n}} E_{ij}$ for  $j<i$ with $(E_{ij})_{1\leq i,j \leq n}$ denoting the standard matrix units in $M_n(\C)$. 

Let $a= \{a_{ij} \mid 1\leq j \leq i \leq n\} $ be a family of elements of $\A$ that are Boolean independent, centered with respect to $\varphi$ and are such that $a_{ii}= a_{ii}^*$, $\varphi[a_{ii}^2]:= \sigma_i$ for all $i$ and  $\varphi[a_{ij}^2]=0$,  $\varphi[a_{ij}a_{ij}^*]:=\alpha_{ij}$ and $\varphi[a_{ij}^\ast a_{ij}] :=\tilde{\alpha}_{ij}$ for all $j < i $. Note that the entries of $A_n$ are assumed to be, up to symmetry, Boolean independent but do not need to be identically distributed. We will give quantitative estimates on the distribution of such matrices in terms of Cauchy transforms. Our results generalize the ones in Popa and Hao \cite{popahao2019} in the case of C*-algebras, since we do not impose a condition on the convergence of moments of order larger than $4$ for the entries and more important we allow having a variance profile in the matrix.

With this aim, we apply Theorem \ref{theo:boolean-monotone} in the quadruple $(M_n(\A),  $ $ \tr_n\otimes \varphi, id_n \otimes \varphi, M_n(\mathbb{C}))$ associated with $(\A, \varphi)$.

\begin{theorem}\label{theo:op-matrices}
Let $A_n$ be an operator-valued Wigner matrix described above and $B_n$ an operator-valued Bernoulli $B_n$ as in \eqref{Matrix Bernoulli}. Then for any $z\in \mathbb{C}^+$, 
\begin{equation} \label{eq:distance wigner matrices}
 \big|(\tr_n \otimes \varphi ) [G_{A_n}(z)] - (\tr_n \otimes \varphi )  [G_{B_n}(z)]\big| \leq   \frac{16}{\Imm(z)^4} \Big(\max_{ j \leq i} \|a_{ij}\|^3\Big)  \frac{1}{\sqrt{n}}.
\end{equation}
\end{theorem}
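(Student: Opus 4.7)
First I would embed the problem in the operator-valued $C^{\ast}$-probability space $(M_n(\A),id_n\otimes\varphi,M_n(\C))$ with scalar state $\phi:=\tr_n\otimes\varphi$, and view Theorem \ref{theo:op-matrices} as an instance of Theorem \ref{theo:boolean-monotone} (Boolean case) applied to the families
\[
\{x_{ij}:1\leq j\leq i\leq n\},\qquad \{y_{ij}:1\leq j\leq i\leq n\},
\]
with $x_{ij}:=e_{ij}\otimes a_{ij}+e_{ij}^{\ast}\otimes a_{ij}^{\ast}$ and $y_{ij}$ defined analogously, so that $A_n=\sum_{j\leq i}x_{ij}$ and $B_n=\sum_{j\leq i}y_{ij}$ are sums of $N=n(n+1)/2$ self-adjoint centered summands. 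The matching of second $M_n(\C)$-valued moments $(id_n\otimes\varphi)[x_{ij}\,b\,x_{ij}]=(id_n\otimes\varphi)[y_{ij}\,b\,y_{ij}]$ follows from a direct expansion together with the identities $\varphi(a_{ij}^2)=\varphi(b_{ij}^2)=0$, $\varphi(a_{ij}a_{ij}^{\ast})=\alpha_{ij}$, $\varphi(a_{ij}^{\ast}a_{ij})=\tilde\alpha_{ij}$, and $\varphi(a_{ii}^2)=\varphi(b_{ii}^2)=\sigma_i$. Boolean independence of $\{x_{ij}\}_{j\leq i}$ over $M_n(\C)$ (and likewise for $\{y_{ij}\}$) lifts from the scalar $\ast$-Boolean independence in $(\A,\varphi)$: every element of the $M_n(\C)$-bimodule algebra generated by $x_{ij}$ lies in $M_n(\C)\otimes\mathcal{A}_{ij}^{(0)}$ with $\mathcal{A}_{ij}^{(0)}$ the non-unital $\ast$-subalgebra of $\A$ generated by $a_{ij}$, so that the alternating-product factorization under $id_n\otimes\varphi$ reduces to the corresponding factorization under $\varphi$.

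With these verifications in place, I would apply the operator-valued Lindeberg identity of Proposition \ref{prop:Lindeberg} at $\Zb=zI\in\bH^+(M_n(\C))$. Exactly as in Section \ref{section:Boolean}, the first-order term $(id_n\otimes\varphi)[A_{ij}(zI)]$ and the second-order term $(id_n\otimes\varphi)[B_{ij}(zI)]$ vanish, so that after applying $\tr_n$ one is left with
\[
\phi[G_{A_n}(z)]-\phi[G_{B_n}(z)]=\sum_{1\leq j\leq i\leq n}\phi[C_{ij}(zI)],
\]
where each $C_{ij}(zI)$ consists of two third-order terms $G_{\uz_\ell}(zI)(x_{ij}G_{\uz_\ell^0}(zI))^3$ and $G_{\uz_{\ell-1}}(zI)(y_{ij}G_{\uz_\ell^0}(zI))^3$ as in Proposition \ref{prop:Lindeberg}.

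The main obstacle is to bound each scalar $|\phi[C_{ij}(zI)]|$ with one extra factor of $1/n$ beyond the naive operator-norm estimate: bounding $\|C_{ij}(zI)\|\lesssim\|x_{ij}\|^3/\Imm(z)^4\lesssim\|a_{ij}\|^3/(n^{3/2}\Imm(z)^4)$ and summing over $\simeq n^2/2$ indices would only yield a bound of order $\sqrt{n}$ instead of the advertised $1/\sqrt{n}$. The missing factor comes from combining the sparse matrix structure of $x_{ij}$---whose matrix part has nonzero entries only at $(i,j)$ and $(j,i)$ (or only at $(i,i)$ when $i=j$)---with the traciality of $\phi$ (tracial $\varphi$ being the standard running convention, consistent with Remark \ref{Remark-eta-circular}). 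Cyclically permuting under $\phi$ one writes
\[
\phi[G_{\uz_\ell}(zI)(x_{ij}G_{\uz_\ell^0}(zI))^3]=\phi[x_{ij}W_{ij}(z)],\quad W_{ij}(z):=G_{\uz_\ell^0}(zI)x_{ij}G_{\uz_\ell^0}(zI)x_{ij}G_{\uz_\ell^0}(zI)G_{\uz_\ell}(zI);
\]
the sparse structure of $x_{ij}$ then forces the trace $\frac{1}{n}\sum_k\varphi((x_{ij}W_{ij})_{kk})$ to collapse to the at most two contributions $k\in\{i,j\}$, producing
\[
|\phi[x_{ij}W_{ij}]|\leq\frac{2\|a_{ij}\|}{n\sqrt{n}}\|W_{ij}\|\leq\frac{2\|a_{ij}\|}{n\sqrt{n}}\cdot\frac{\|x_{ij}\|^2}{\Imm(z)^4}\leq\frac{8\|a_{ij}\|^3}{n^{5/2}\Imm(z)^4}.
\]
The analogous bound for the $y_{ij}$-term uses Remark \ref{Remark-eta-circular} to choose an $\eta$-circular representative with $\|b_{ij}\|\leq\|a_{ij}\|$ (and $\|b_{ii}\|=\sqrt{\sigma_i}\leq\|a_{ii}\|$). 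Summing $|\phi[C_{ij}(zI)]|\leq 16\|a_{ij}\|^3/(n^{5/2}\Imm(z)^4)$ over $j\leq i$ and using $(n+1)/n^{3/2}\leq 2/\sqrt{n}$ for $n\geq1$ yields the claimed constant $16$ and rate $1/\sqrt{n}$.
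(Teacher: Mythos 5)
Your proof is correct and follows essentially the same route as the paper's: embed in $(M_n(\A), id_n\otimes\varphi, M_n(\C))$, apply the Lindeberg telescoping from Proposition \ref{prop:Lindeberg}, and then observe that the naive operator-norm bound on the third-order terms only gives $O(\sqrt{n})$, so the decisive step is to extract an extra $1/n$ factor from the sparsity of the matrix part of $x_{ij}$. Where you differ from the paper is purely in the bookkeeping of that extraction. The paper isolates the middle factor $(e_{ij}\otimes a_{ij})\,G\,(e_{ij}\otimes a_{ij})\,G\,(e_{ij}\otimes a_{ij}) = \tfrac{1}{n\sqrt{n}}\,E_{ij}\otimes c$ explicitly, then uses the H\"older-type estimate $|\phi(AXB)|\leq\|A\|\,\|B\|\,\phi(|X|)$ together with $\phi(|E_{ij}\otimes c|)=\tfrac{1}{n}\varphi(|c|)\leq\tfrac{1}{n}\|c\|$. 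You instead cyclically permute $\phi$ to bring one copy of $x_{ij}$ to the front and then observe that the diagonal sum in $\tfrac{1}{n}\sum_k\varphi\big((x_{ij}W_{ij})_{kk}\big)$ collapses to the two indices $k\in\{i,j\}$. Both devices implement the same cancellation and yield the identical per-term bound $8\|a_{ij}\|^3/(n^{5/2}\Imm(z)^4)$; your factor $8$ arises as $2\cdot 4$ from the two diagonal entries and the (slightly lossy) bound $\|x_{ij}\|\leq 2\|a_{ij}\|/\sqrt{n}$, while the paper's $8$ counts the $2^3$ sub-terms of $(x_{ij}G)^3$ -- this is coincidental but harmless. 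One point worth noting: your cyclic-permutation step and the paper's inequality $|\phi(AXB)|\leq\|A\|\|B\|\phi(|X|)$ both presuppose that $\tr_n\otimes\varphi$, hence $\varphi$, is tracial; you are explicit about this assumption while the paper leaves it implicit, so there is no gap relative to the paper, but it is a hypothesis one should be aware of given that the paper earlier notes $\varphi$ need not be tracial in the Boolean setting.
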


\begin{proof}
Let $b=\{b_{ij} \mid 1\leq j \leq i \leq n\}$ be a family of \emph{Boolean independent elements} such that
\begin{itemize}
    \item for any $1  \leq i \leq n$, $b_{ii}$ is a Bernoulli element with $\varphi[b_{ii}] =0$ and $\varphi[b_{ii}^2]= \varphi[a_{ii}^2]$,
    \item for any $1\leq j < i \leq N$,  $b_{ij}$ is a $\eta$-diagonal circular element with $ \varphi[b_{ij}]=0$ and  $\varphi[b_{ij}b_{ij}^*] =\varphi[a_{ij} a_{ij}^*]:=\alpha_{ij}$ and $\varphi[b_{ij}^\ast b_{ij}] =\varphi[a_{ij}^\ast a_{ij}]:=\tilde{\alpha}_{ij}$.
\end{itemize}
Without loss of generality, $b$ can be assumed to be Boolean independent from $a$. Now set $S_N$ to be the $N\times N$ operator-valued Wigner matrix whose entries are given by the family $b$, i.e.
 \[
 B_n = \sum_{1\leq j \leq i \leq n} \big(e_{ij} \otimes b_{ij}+ e_{ij}^* \otimes b_{ij}^* \big).
 \]
 Finally, set $x_{ij} = e_{ij} \otimes a_{ij} + e_{ij}^* \otimes a_{ij}^* $ and $y_{ij} = e_{ij} \otimes b_{ij} + e_{ij}^* \otimes b_{ij}^*$ for all $1\leq j\leq i\leq n$. By an application of Lemma \ref{lema:booleanmatrix cumulants}, one sees that, when lifted to matrices, Boolean independence is preserved, in the sense that independence of the entries $\{a_{ij} , b_{ij} \mid 1\leq j \leq i \leq n\}$ with respect to $\varphi$ implies independence  with amalgamation over $M_n(\C)$ of $\{x_{ij} , y_{ij} \mid 1\leq j \leq i \leq n\}$ with respect to $\id_n \otimes \varphi$. Moreover, the variables are centered and the moments of second order match: for any $b \in M_n(\C)$, 
\[
(\id_n \otimes \varphi)[x_{ij}] = (\id_n \otimes \varphi)[y_{ij}]=0
\quad \text{and} \quad 
(\id_n \otimes \varphi)\big[ x_{ij} b x_{ij}\big] = (\id_n \otimes \varphi) \big[ y_{ij} \,  b \,  y_{ij} \big]. 
\]
Having this in hand and setting $N=n(n+1)/2$, we follow the same steps as Theorem \ref{theo:boolean-monotone}. By adopting its notation, we get for any $z \in \C^+$,
\[
\big|(\tr_n \otimes \varphi ) [G_{A_n}(z)] - (\tr_n \otimes \varphi )  [G_{B_n}(z)]\big| 
\leq \sum_{1\leq j \leq i \leq n} \Big(\big|(tr_n\otimes\varphi)[W_{ij}(z)]\big| + \big|(tr_n\otimes\varphi)[\widetilde{W}_{ij}(z)]\big|\Big),
\]
where 
\[\begin{aligned}
W_{ij}(z)&= G_{\uz_{ij}}(z) x_{ij}   G_{\uz^0_{ij}}(z) x_{ij}   G_{\uz^0_{ij}}(z)x_{ij} G_{\uz^0_{ij}}(z),
\\
\widetilde{W}_{ij}(z)&= G_{\uz_{i-1,j}(z)} y_{ij} G_{\uz^0_{ij}}(z) y_{ij} G_{\uz^0_{ij}}(z)y_{ij} G_{\uz^0_{ij}}(z).
\end{aligned}\]
We start by controlling the term:
\begin{align*}
\big|(tr_n \otimes &\varphi)\big[  G_{\uz_{ij}}(z) (e_{ij}\otimes a_{ij})   G_{\uz^0_{ij}}(z) (e_{ij}\otimes a_{ij})   G_{\uz^0_{ij}}(z)(e_{ij}\otimes a_{ij}) G_{\uz^0_{ij}}(z) \big] \big| 
\\ 
&\leq  \|G_{\uz_{ij}}(z)\| \|G_{\uz^0_{ij}}(z)\| (tr_n\otimes\varphi)
 \big[ \big| (e_{ij}\otimes a_{ij})   G_{\uz^0_{ij}}(z) (e_{ij}\otimes a_{ij})   G_{\uz^0_{ij}}(z)(e_{ij}\otimes a_{ij})\big| \big]
 \\& \leq \frac{1}{n\sqrt{n}} \|G_{\uz_{ij}}(z)\| \|G_{\uz^0_{ij}}(z)\| (tr_n\otimes\varphi)
 \big[ \big|  E_{ij} \otimes a_{ij}[G_{\uz_{ij}^0}(z)]_{ji}a_{ij}[G_{\uz_{ij}^0}(z)]_{ji}a_{ij} \big|  \big]
 \\& \leq \frac{1}{n^2\sqrt{n}} \|G_{\uz_{ij}}(z)\| \|G_{\uz^0_{ij}}(z)\|^3 \|a_{ij}\|^3
 \\&\leq \frac{1}{n^2\sqrt{n}}\frac{1}{\Imm(z)^4} \|a_{ij}\|^3.
\end{align*}
As the other terms are treated in the same way, we get that 
\[
\begin{aligned}
\big|(tr_n\otimes\varphi)[W_{ij}(z)]\big| &\leq 8 \frac{1}{n^2\sqrt{n}} \frac{1}{\Imm(z)^4} \|a_{ij}\|^3,
\\
\big|(tr_n\otimes\varphi)[\widetilde{W}_{ij}(z)]\big| &\leq 8 \frac{1}{n^2\sqrt{n}} \frac{1}{\Imm(z)^4} \|b_{ij}\|^3.
\end{aligned} 
\]
Summing over $1\leq j \leq i \leq n$, we finally get for any $z \in \C^+$,
\[
\big|(\tr_n \otimes \varphi ) [G_{A_n}(z)] - (\tr_n \otimes \varphi )  [G_{B_n}(z)]\big| \leq   \frac{8}{\Imm(z)^4} \max_{ j \leq i }\Big(\|a_{ij}\|^3+ \|b_{ij}\|^3\Big)  \frac{1}{\sqrt{n}}.
\]
As the $b_{ij}$'s are $\eta$-diagonal elements with the same variance as the $a_{ij}$'s, then by Remark \ref{Remark-eta-circular} we have that $\|b_{ij}\|=\max \{\sqrt{\alpha_{ij}}, \sqrt{\widetilde{\alpha}_{ij}}\}$.
To obtain \ref{eq:distance wigner matrices} it remains to notice that $||a_{ij}||^2=||a_{ij}a_{ij}^*||\geq \varphi[a_{ij}a_{ij}^*]=\alpha_{ij}
$ and $||a_{ij}||^2=||a_{ij}^*a_{ij}||\geq \varphi[a_{ij}^*a_{ij}]=\widetilde{\alpha}_{ij}$.

\end{proof}

Combining Theorem \ref{theo:op-matrices} with Theorem \ref{theo:OV_Bernoulli_comparison} we can get limit theorem for Wigner Matrices with Boolean entries.

\begin{proposition}
Let $A_n$ be as in \eqref{eq-BooleanWigner}  and suppose that $||a_{ij}||\leq C$, for some constant $C$ independent of $N$.
Furthermore let $\eta_n(b)= (id_n \otimes \varphi)[A_nbA_n]$ and suppose that $\eta_n(1)$ has distribution $\rho_n$ with respect $tr\otimes \varphi$. If, $\rho_n$ to converges to $\rho$ in distribution, then, the distribution of $A_n$ with respect to $tr\otimes \varphi$ converges to the unique symmetric measure $\mu$ such that $\mu^{(2)}=\rho$.
\end{proposition}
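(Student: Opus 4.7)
The first step is to invoke Theorem~\ref{theo:op-matrices} to replace $A_n$ by the matrix-valued Bernoulli element $B_n$ of \eqref{Matrix Bernoulli} built from the same variances $\alpha_{ij},\widetilde\alpha_{ij},\sigma_i$ as the entries of $A_n$. Since $\|a_{ij}\|\leq C$ uniformly, Theorem~\ref{theo:op-matrices} yields for every $z\in\C^+$,
\[
\bigl|(\tr_n\otimes\varphi)[G_{A_n}(z)]-(\tr_n\otimes\varphi)[G_{B_n}(z)]\bigr|\leq \frac{16\, C^{3}}{\Imm(z)^4\sqrt{n}}\longrightarrow 0.
\]
Thus, it suffices to identify the limit in distribution of $B_n$ with respect to $\tr_n\otimes\varphi$ and transfer it back to $A_n$ through the pointwise convergence of Cauchy transforms.

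The second step uses Proposition~\ref{prop:matrixBernoulli} which tells us that $B_n$ is an $M_n(\C)$-valued Bernoulli element whose variance map $\eta_n$ coincides (by construction) with the one of $A_n$. Lemma~\ref{Lem: Bernoulli}, whose argument is scalar-level and therefore applies uniformly in the amalgamation algebra, then gives the moments $(\tr_n\otimes\varphi)[B_n^{2k}]=\int t^k\,d\rho_n(t)$ and $(\tr_n\otimes\varphi)[B_n^{2k+1}]=0$. Consequently, the scalar distribution $\mu_{B_n}$ of $B_n$ is the unique symmetric probability measure on $\R$ with $\mu_{B_n}^{(2)}=\rho_n$, so it satisfies $\mu_{B_n}([-t,t])=\rho_n([0,t^2])$ for every $t\geq 0$. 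Similarly, $\mu$ is defined to be the unique symmetric probability measure with $\mu^{(2)}=\rho$ (well-defined since $\rho$ is a probability measure, being a weak limit of such), and it satisfies $\mu([-t,t])=\rho([0,t^2])$.

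The third step transfers the hypothesis $\rho_n\to\rho$ weakly into the weak convergence $\mu_{B_n}\to\mu$ by following verbatim the proof of Corollary~\ref{cor OVBernoulli}: at each continuity point $t>0$ of $\mu$ we have $\mu_{B_n}([-t,t])=\rho_n([0,t^2])\to\rho([0,t^2])=\mu([-t,t])$, and by symmetry of both families this upgrades to weak convergence. In particular, $(\tr_n\otimes\varphi)[G_{B_n}(z)]=G_{\mu_{B_n}}(z)\to G_\mu(z)$ pointwise on $\C^+$, and combining with the first step gives $(\tr_n\otimes\varphi)[G_{A_n}(z)]\to G_\mu(z)$ pointwise on $\C^+$.

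To conclude weak convergence $\mu_{A_n}\to\mu$ from this pointwise convergence, one needs tightness of $\{\mu_{A_n}\}_n$. This follows from the bound $\int t^2\,d\mu_{A_n}(t)=(\tr_n\otimes\varphi)[A_n^2]=\int t\,d\rho_n(t)$, which is uniformly bounded because $\|\eta_n(1)\|\leq C^2$ (a direct computation using $\|a_{ij}\|\leq C$ and the $1/\sqrt{n}$ normalization of $e_{ij}$ shows each diagonal entry of $\eta_n(1)$ is an average of quantities bounded by $C^2$), and Chebyshev's inequality closes the tightness argument. The main obstacle of the proof is essentially book-keeping: verifying that Corollary~\ref{cor OVBernoulli}, originally formulated for a fixed $\B$, remains valid when the amalgamation algebras $M_n(\C)$ change with $n$. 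This is resolved by noticing the proof relies solely on the scalar identity $\mu_{B_n}([-t,t])=\rho_n([0,t^2])$, which makes no reference to the base algebra.
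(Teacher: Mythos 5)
Your proposal is correct and follows essentially the same route as the paper: apply Theorem~\ref{theo:op-matrices} to replace $A_n$ by the matrix Bernoulli $B_n$ built on the same covariance profile, use Lemma~\ref{Lem: Bernoulli}/Proposition~\ref{prop:matrixBernoulli} to identify $\mu_{B_n}$ as the symmetric square-root pushforward of $\rho_n$, invoke (the argument of) Corollary~\ref{cor OVBernoulli} to turn $\rho_n\to\rho$ into $\mu_{B_n}\to\mu$, and close the triangle at the level of Cauchy transforms. The only stylistic divergence is in the wrap-up: the paper works with uniform convergence of Cauchy transforms on compacts, while you use pointwise convergence plus a separate tightness argument via the uniform second-moment bound $\int t^2\,d\mu_{A_n}=(\tr_n\otimes\varphi)[A_n^2]\leq C^2$. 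That tightness step is actually redundant, since pointwise convergence of $G_{\mu_{A_n}}$ on $\C^+$ to $G_\mu$ with $\mu$ a probability measure already implies $\mu_{A_n}\to\mu$ weakly; it is however harmless and self-contained. You are also slightly more careful than the paper in two places: you define $\mu$ directly as the symmetric measure with $\mu^{(2)}=\rho$ rather than positing an abstract operator-valued Bernoulli element $B$ realizing $\eta(1)\sim\rho$, and you explicitly flag (and resolve) the fact that Corollary~\ref{cor OVBernoulli} is stated for a fixed amalgamation algebra while here $\B=M_n(\C)$ varies with $n$.
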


\begin{proof}

Recall that convergence in distribution is equivalent to uniform convergence of the Cauchy transform on compact sets. Let $\mu_{A_n}$ be the distribution of $A_n$ with respect to $tr\otimes \varphi$. We will prove that for any $\epsilon>0$, and any $K$ a compact sets of $\mathbb{C}^+$ there is an $N$  such that, for any $n>N$,
\[
\big|\mathcal{G}_{\mu_{A_n}}(z)- \mathcal{G}_\mu(z)\big| \leq \epsilon, \quad \text{for all }z\in K.
\]
To do this, let $B_n$ be an operator-valued Bernoulli as in Theorem \ref{theo:op-matrices} with distribution $\rho_n$ and let $B$ be an operator-valued  Bernoulli random variable with variance map $\eta$ such that $\eta(1)$ has distribution $\rho$. Since $\rho_n$ converges weakly to $\rho$, then by Corollary \ref{cor OVBernoulli}, $\mu_{B_n}$ converges to $\mu_B$, weakly. Hence, there exists an integer $N_1>1$ such that, for all $n>N_2$,
$$\big|\mathcal{G}_{\mu_{B_n}}(z)-\mathcal{G}_{\mu_{B}}(z)\big|\leq \epsilon/2, \quad \text{for all }z\in K.$$
On the other hand, equation \eqref{eq:distance wigner matrices} yields that there exists $N_2>1$ such that for all  $n>N_2$, \begin{equation}
\label{triangle2} |\mathcal{G}_{\mu_{A_n}}(z)-\mathcal{G}_{\mu_{B_n}}(z)\big|\leq \epsilon/2, \quad \text{for all }z\in K.
\end{equation}
The result follows by taking $N=max(N_1,N_2)$.
\end{proof}

Let us finish by considering the case when $\{a_{ij}\}_{n\geq i>j\geq0}$ are identically distributed, which corresponds to Theorem 5.1 of Popa and Hao\cite{popahao2019}.

\begin{example} 
Let $\{a_{ij}\}_{n\geq i>j\geq0}$ be a family of Boolean independent identically distributed random variables in some $C^*$-probability space $(A,\varphi)$ and let $A_n$ be the $n \times n$ matrix defined by 
\begin{equation}
\label{eq-BooleanWigner}    
A_n = \sum_{1\leq j \leq i \leq n} \big( e_{ij} \otimes a_{ij} + e_{ij}^* \otimes a_{ij}^* \big).
\end{equation}
If we denote by  $\varphi(a_{ij}a_{ij}^\ast)=\alpha$ and $\varphi(a_{ij}^*a_{ij})=\tilde{\alpha}$, then $\rho_n:=\eta_n(b)= (id_n \otimes \varphi)[A_nbA_n]=(id_n \otimes \varphi)[B_nbB_n]$ where $B_n$ is the matrix-valued Bernoulli as in part (i) from  Example \ref{example: OVmatrixBernoullis} with $\sigma=0$. Thus $\rho_n$ converges to the uniform distribution $ \text{Uniform}\big(\tilde{\alpha},\alpha\big)$ and hence we conclude by Theorem \ref{theo:op-matrices} the limiting distribution of $A_n$ with respect to $\tr_n\otimes\varphi$ is 
$$\mu(dt)=\frac{|t|}{(\alpha-\tilde{\alpha})}dt,  \quad |t|\in(\sqrt{\tilde{\alpha}},\sqrt{\alpha}).$$
In the case $\alpha=\tilde\alpha$,  $\rho_n$ converges to $\delta_\alpha$ and $\mu_{A_n}\to\frac{1}{2}\delta_{\sqrt{\alpha}}+\frac{1}{2}\delta_{\sqrt{\alpha}}$.
\end{example}

\section*{Acknowlegments}

OA was supported by CONACYT Grant CB-2017-2018-A1-S-9764 and by the SFB-TRR 195 “Symbolic Tools in Mathematics and their Application” of the German Research Foundation (DFG).  This project started when OA and MB were visiting Saarland University. They would like to thank Roland Speicher for his hospitality. 

\appendix
\section{Operator-valued Infinitesimal Products}\label{Appendix:OVI}
This appendix is dedicated to the study of operator-valued infinitesimal products of operator-valued infinitesimal probability spaces. Note that the OV free product was introduced in \cite{speicher1998combinatorial}, while the OV Boolean and monotone cases have been studied in \cite{popa2013non} and \cite{popa2008combinatorial}, respectively. We also refer the reader to  \cite[Chapter 7]{Jekel-notes} for a nice summary on the subject. However, this is less studied in the infinitesimal setting where only the (scalar-valued) infinitesimal free product was constructed in \cite{fevrier2010infinitesimal}. In this appendix, we give an algebraic construction of the operator-valued infinitesimal product algebraically in the free, Boolean, and monotone settings.  

First of all, let us recall the following result (see \cite[Theorem 4.3.1]{Jekel-notes}):  
suppose that $H_1,\dots,H_n$ are  Hilbert-$\B$-bimodules and $\xi_j$ is a unit vector in $H_j$ for each $j=1,\dots,n$. In addition, for each $j$, we define the map $E_j[b]=\langle \xi_j,b\xi_j\rangle$ for all $b\in\B(H_j)$. Then there exists a Hilbert $\B$-bimodule $H$ and a unit vector $\xi$ in $H$, and an injective $*$-homomorphism $\rho_j:\B(H_j)\to \B(H)$ such that 
$$
E[\rho_j(b)]=E_j[b] \text{ for each }b\in \B(H_j).
$$
Moreover, $\rho_1(\B(H_1)),\rho_2(\B(H_2)),\dots,\rho_n(\B(H_n))$ are freely/Boolean/monotone independent with respect to $E$. 

Suppose that for each $j$, there exists a self-adjoint $\B$-bimodule linear map $E'_j:\B(H_j)\to \B$ that is completely bounded with $E_j'[1]=0$.  
Here we let $\A$ be be the unital $C^*$-algebra generated by $\rho_1(\B(H_1)),\dots,\rho_n(\B(H_n))$. 
For each notion of independence, our aim is to construct a self-adjoint $\B$-bimodule linear map $E'$ on $\A$ that vanishes on $\B$ and for which $\rho_1(\B(H_1)),\rho_2(\B(H_2)),$ $\dots,\rho_n(\B(H_n))$ are infinitesimally independent in the OVI probability space $(\A,E,E',\B)$. Note that for each $j$, we shall define $E'$ on $\rho_j(\B(H_j))$ as follows: 
$$
E'[\rho_j(b)]:=E'_j[b] \text{ for all }b\in \B(H_j).
$$
To describe the construction of $E'$ on $\A$, it is sufficient to show how $E'$ is defined on terms of the following form 
$$
\rho_{j_1}(\B(H_{j_1}))\rho_{j_2}(\B(H_{j_2}))\dots \rho_{j_k}(\B(H_{j_k}))
$$
where $j_1,\dots,j_k\in [n]$ and $j_1\neq j_2\neq \dots \neq j_k$. \\
\textbf{The Infinitesimally Free Case:}
$$
E'[\rho_{j_1}(b_1)\dots \rho_{j_k}(b_k)]:= \sum\limits_{s=1}^k E[\rho_{j_1}(b_1)\dots \rho_{j_{s-1}}(b_{s-1})E'[\rho_{j_s}(b_s)]\rho_{j_{s+1}}(b_{s+1})\dots \rho_{j_k}(b_k)]
$$
where $b_1\in H_{j_1},\dots,b_k\in H_{j_k}$ for some $j_1\neq j_2\neq \dots \neq j_k$ that $E_{j_1}(b_1)=\dots=E_{j_k}(b_k)=0.$
\\
\textbf{The Infinitesimally Boolean Case:} 
$$
E'[\rho_{j_1}(b_1)\dots \rho_{j_k}(b_k)] := \sum\limits_{s=1}^k E[\rho_{j_1}(b_1)]\dots E[\rho_{j_{s-1}}(b_{s-1})]E'[\rho_{j_s}(b_s)]E[\rho_{j_{s+1}}(b_{s+1})]\dots E[\rho_{j_k}(b_k)]
$$
where $b_1\in H_{j_1},\dots,b_k\in H_{j_k}$ for some $j_1\neq j_2\neq \dots \neq j_k$.\\
\textbf{The Infinitesimally Monotone Case:}
\begin{eqnarray*}
E'[\rho_{j_1}(b_1)\dots \rho_{j_k}(b_k)]&:=&E'[\rho_{j_1}(b_1)\dots \rho_{j_{s-1}}(b_{s-1})E[\rho_{j_s}(b_s)]\rho_{j_{s+1}}(b_{s+1})\dots \rho_{j_k}(b_k)] \\
&&+
E[\rho_{j_1}(b_1)\dots \rho_{j_{s-1}}(b_{s-1})E'[\rho_{j_s}(b_s)]\rho_{j_{s+1}}(b_{s+1})\dots \rho_{j_k}(b_k)] 
\end{eqnarray*}
whenever $b_1\in H_{j_1},\dots,b_k\in H_{j_k}$ and $j_{s-1}<j_s>j_{s+1}$. 

Note that the fact that $E'$ is well-defined on $\A$ in the Boolean case is obvious. For the free case, we observe that $\A$ can be written as follows: 
$$\A=\B 1\oplus \bigoplus\limits_{n\geq 0} \bigoplus\limits_{i_1\neq \cdots\neq i_n}
\rho_{i_1}(\B(H_{i_1}))^\circ\rho_{i_2}(\B(H_{i_2}))^\circ\dots \rho_{i_n}(\B(H_{i_n}))^\circ
$$
where $\rho_{i_j}(\B(H_{i_j}))^\circ$ is the set of centered elements of $\rho_{i_j}(\B(H_{i_j}))^\circ$. Due to the direct sum decomposition, we only need to define it on $\B 1$ and each of $\rho_{i_1}(\B(H_{i_1}))^\circ\rho_{i_2}(\B(H_{i_2}))^\circ\dots \rho_{i_n}(\B(H_{i_n}))^\circ$. 
Therefore, for the free case, $E'$ is well-defined on $\A$. In addition, $E'$ is also well-defined on $\A$ for the monotone case by recursively computing the mixed moments. 

Since $E_j$ and $E_j'$ are both self-adjoint for each $j$, we can easily see that $E'$ is self-adjoint for each case. Moreover, following the definition of $E'$, it is clear that $\rho_1(\B(H_1)),\rho_2(\B(H_2)),\dots,$ $\rho_n(\B(H_n))$ are infinitesimally freely/ Boolean/monotone independent with respect to $(E,E')$. However, it is not clear whether we have any continuity property for $E'$. Hence, we only have the operator-valued infinitesimal products in the algebraic sense. 

\nocite{*}

\bibliographystyle{abbrv}
\bibliography{ref}
\end{document}